\documentclass[reqno]{amsart}
\usepackage{amssymb,amscd}
\usepackage[all]{xy}
\usepackage{tikz}
\usetikzlibrary{shapes,arrows}

\numberwithin{subsection}{section}

\numberwithin{equation}{section}

\theoremstyle{plain}
\newtheorem{thm}[equation]{Theorem}
\newtheorem{prop}[equation]{Proposition}
\newtheorem{cor}[equation]{Corollary}
\newtheorem{lemma}[equation]{Lemma}

\theoremstyle{definition}
\newtheorem{defn}[equation]{Definition}

\theoremstyle{remark}
\newtheorem{rem}[equation]{Remark}
\newtheorem{rems}[equation]{Remarks}

\newtheorem{rem-exer}[equation]{Remark/Exercise}
\newtheorem{rem-exers}[equation]{Remark/Exercises}
\newtheorem{ex}[equation]{Example}

\usepackage[OT2,T1]{fontenc}
\DeclareSymbolFont{cyrletters}{OT2}{wncyr}{m}{n}
\DeclareMathSymbol{\sha}{\mathalpha}{cyrletters}{"58}

\newcommand{\CC}{\mathcal{C}}

\newcommand{\DD}{\mathcal{D}}

\newcommand{\JJ}{\mathcal{J}}

\renewcommand{\SS}{\mathcal{S}}

\newcommand{\XX}{\mathcal{X}}

\newcommand{\YY}{\mathcal{Y}}

\newcommand{\ZZ}{{\mathcal{Z}}}

\newcommand{\FF}{\mathcal{F}}

\newcommand{\F}{\mathbb{F}}
\newcommand{\Fp}{{\mathbb{F}_p}}
\newcommand{\Fptimes}{{\mathbb{F}_p^\times}}
\newcommand{\Fq}{{\mathbb{F}_q}}

\newcommand{\Fqtimes}{{\mathbb{F}_q^\times}}

\newcommand{\Fpbar}{{\overline{\mathbb{F}}_p}}
\newcommand{\Fqbar}{{\overline{\mathbb{F}}_q}}

\newcommand{\kbar}{{\overline{k}}}

\newcommand{\ratto}{{\dashrightarrow}}
\newcommand{\Ql}{{\mathbb{Q}_\ell}}

\newcommand{\Qlbar}{{\overline{\mathbb{Q}}_\ell}}

\newcommand{\Z}{\mathbb{Z}}

\newcommand{\Q}{\mathbb{Q}}
\newcommand{\R}{\mathbb{R}}
\newcommand{\C}{\mathbb{C}}
\newcommand{\A}{\mathbb{A}}
\renewcommand{\P}{\mathbb{P}}


\newcommand{\f}{\mathfrak{f}}


\newcommand{\into}{\hookrightarrow}

\newcommand{\tensor}{\otimes}
\newcommand{\compose}{\circ}


\newcommand{\GL}{\mathrm{GL}}


\DeclareMathOperator{\im}{Im}

\DeclareMathOperator{\tr}{Tr}

\DeclareMathOperator{\cond}{Cond}
\DeclareMathOperator{\ord}{ord}
\DeclareMathOperator{\rk}{Rank}

\DeclareMathOperator{\dvsr}{div}

\DeclareMathOperator{\Hom}{Hom}
\DeclareMathOperator{\aut}{Aut}

\DeclareMathOperator{\NS}{NS}
\DeclareMathOperator{\MW}{MW}

\DeclareMathOperator{\lcm}{lcm}
\DeclareMathOperator{\gal}{Gal}
\DeclareMathOperator{\spec}{Spec}

\DeclareMathOperator{\en}{End}

\DeclareMathOperator{\Fr}{Fr}
\DeclareMathOperator{\ch}{Char}

\def\clap#1{\hbox to 0pt{\hss#1\hss}}

\numberwithin{equation}{subsection}

\begin{document}
\title{Arithmetic of abelian varieties in Artin-Schreier extensions}
\author{Rachel Pries}
\address{Department of Mathematics\\
Colorado State University\\
Fort Collins, CO 80523}
\email{pries@math.colostate.edu}
\author{Douglas Ulmer}
\address{School of Mathematics \\ 
Georgia Institute of Technology\\ 
Atlanta, GA 30332}
\email{ulmer@math.gatech.edu}


\begin{abstract}
  We study abelian varieties defined over function fields of curves in
  positive characteristic $p$, focusing on their arithmetic in the
  system of Artin-Schreier extensions.  First, we prove that the
  $L$-function of such an abelian variety vanishes to high order at
  the center point of its functional equation under a parity condition
  on the conductor.  Second, we develop an Artin-Schreier variant of a
  construction of Berger. This yields a new class of Jacobians over
  function fields for which the Birch and Swinnerton-Dyer conjecture
  holds.  Third, we give a formula for the rank of the Mordell-Weil
  groups of these Jacobians in terms of the geometry of their fibers
  of bad reduction and homomorphisms between Jacobians of auxiliary
  Artin-Schreier curves.  We illustrate these theorems by computing
  the rank for explicit examples of Jacobians of arbitrary dimension
  $g$, exhibiting Jacobians with bounded rank and others with
  unbounded rank in the tower of Artin-Schreier extensions.  Finally,
  we compute the Mordell-Weil lattices of an isotrivial elliptic curve
  and a family of non-isotrivial elliptic curves.  The latter exhibits
  an exotic phenomenon whereby the angles between lattice vectors are
  related to point counts on elliptic curves over finite fields.  Our
  methods also yield new results about supersingular factors of
  Jacobians of Artin-Schreier curves.\\
  MSC2000: Primary 11G10, 11G40, 14G05; 
  Secondary 11G05, 11G30, 14H25, 14J20, 14K15
\end{abstract}

\maketitle

\section{Introduction}
Let $k$ be a finite field of characteristic $p>0$ and suppose
$F=k(\CC)$ is the function field of a smooth, projective curve $\CC$
over $k$.  Given an abelian variety $J$ defined over $F$, the Birch
and Swinnerton-Dyer (BSD) conjecture relates the $L$-function of $J$
and the Mordell-Weil group $J(F)$.  In particular, it states that the
algebraic rank of the Mordell-Weil group equals the analytic rank, the
order of vanishing of the $L$-function at $s=1$.  If the BSD
conjecture is true for $J$ over $F$ and if $K/F$ is a finite
extension, it is not known in general whether the BSD conjecture is
true for $J$ over $K$.

In \cite{Ulmer07b}, the second author studied the behavior of a more
general class of $L$-functions over geometrically abelian extensions
$K/F$.  Specifically, for certain self-dual symplectic or orthogonal
representations $\rho:G_F\to\GL_n(\Qlbar)$ of weight $w$, there is a
factorization of $L(\rho, K, T)$, with factors indexed by orbits of
the character group of $\gal(K/F)$ under Frobenius, and a
criterion for a factor to have a zero at the center point of its
functional equation.  Under a parity condition on the conductor of
$\rho$, this implies that the order of vanishing of $L(\rho, K_d, T)$
at $T=|k|^{-(w+1)/2}$ is unbounded among Kummer extensions of the form
$K_d=k(t^{1/d})$ of $F=k(t)$, see \cite[Theorem 4.7]{Ulmer07b}.

The system of rational Kummer extensions of function fields also plays
a key role in the papers \cite{Berger08, Ulmer13a, Ulmer14a}.
For example, \cite{Berger08} proves that the BSD conjecture holds for
Jacobians $J_X/K_d$ when $X$ is in the class of curves defined by
equations of the form $f(x)-tg(y)$ over $F=k(t)$ and $K_d$ is in the
Kummer tower of fields $K_d=k(t^{1/d})$.  Also, \cite{Ulmer13a}, gives
a formula for the rank of $J_X$ over $K_d$ which depends on
homomorphisms between the Jacobians of auxiliary Kummer curves.

In this paper, we study these phenomena for the system of
Artin-Schreier extensions of function fields of positive
characteristic.  The main results are analogous to those described
above: an unboundedness of analytic ranks result
(Corollary~\ref{cor:analytic-ranks}), a proof of the BSD conjecture
for Jacobians of a new class of curves $X$ over an Artin-Schreier
tower of fields (Corollary~\ref{cor:BSD}), and a formula for the rank
of the Mordell-Weil group of $J_X$ over Artin-Schreier extensions
which depends on homomorphisms between the Jacobians of auxiliary
Artin-Schreier curves (Theorem~\ref{thm:ranks}).

There are several reasons why the Artin-Schreier variants of these
theorems are quite compelling.  First, the curves which can be studied
using the Artin-Schreier variant include those defined by an equation
of the form $f(x)-g(y)-t$ over $F=k(t)$.  The geometry of these curves
is comparatively easy to analyze, allowing us to apply the main
results in broad generality.  For example,
Proposition~\ref{prop:higher-g-unbounded} illustrates that the
hyperelliptic curve $x^2=g(y)+t$ with $g(y) \in k[y]$ of degree $N$
satisfies the BSD conjecture, with unbounded rank in the tower of
Artin-Schreier extensions of $k(t)$, under the very mild conditions
that $p \nmid N$ and the finite critical values of $g(y)$ are
distinct.  Second, the structure of endomorphism rings of Jacobians of
Artin-Schreier curves is sometimes well-understood.  This allows us to
compute the exact value of the rank of the Mordell-Weil group in
several natural cases.  Finally, some apparently unusual lattices
appear as Mordell-Weil lattices of elliptic curves covered by our
analysis.  We illustrate this for the family of elliptic curves
$Y^2=X(X+16b^2)(X+t^2)$ (where $b$ is a parameter in a finite field)
in Subsection~\ref{Slattice}.

\begin{figure}\label{f:leitfaden}
\begin{center}
\tikzstyle{block} = [rectangle, draw,
    text width=6em, text centered, minimum height=4em]
\tikzstyle{line} = [draw, -latex']
\begin{tikzpicture}
\matrix [column sep=0.5cm,row sep=0.5cm] {
    \node [block] (Ch2) {Section~2 \\ Analytic ranks};
&&  \node [block] (Ch3) {Section~3 \\ Surfaces dominated by a product};\\
&    \node [block] (Ch4) {Section~4 \\ Examples---Lower bounds};\\
&&    \node [block] (Ch5) {Section~5 \\ A rank formula};\\
&  \node [block] (Ch6) {Section~6 \\ Examples---Exact ranks};
&& \node [block] (Ch7) {Section~7 \\ Examples---Explicit points};\\
};
    \path [line] (Ch2) -- (Ch4);
    \path [line] (Ch3) -- (Ch4);
    \path [line] (Ch3) -- (Ch5);
    \path [line] (Ch5) -- (Ch6);
    \path [line] (Ch5) -- (Ch7);
    \path [line,dashed] (Ch6) -- (Ch7);

\end{tikzpicture}
\caption{Leitfaden}    
\end{center}
\end{figure}
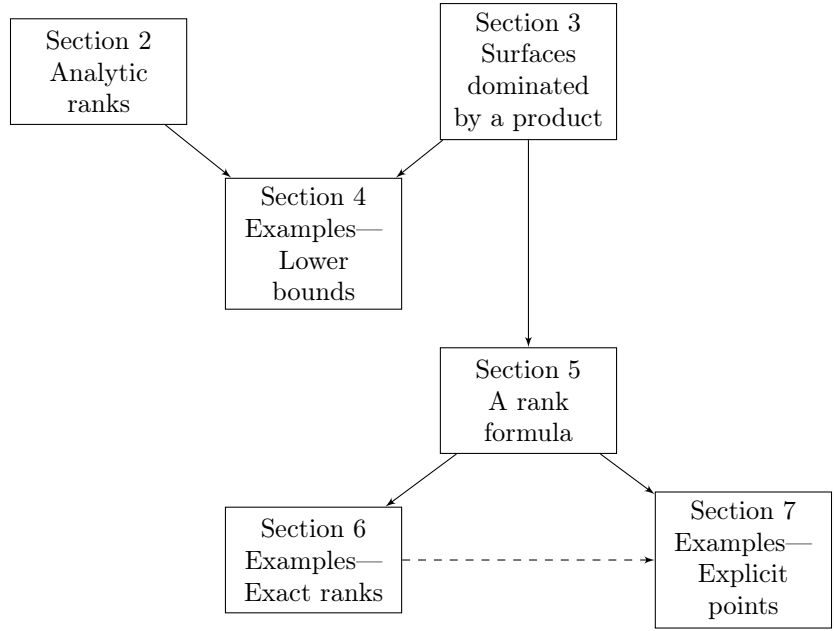

Here is an outline of the paper.  In Section~\ref{Sanalytic}, we
consider certain elementary abelian extensions $K$ of $F=k(\CC)$ with
$\deg(K/F)=q$ a power of $p$, and we study the $L$-functions $L(\rho,
K, T)$ of certain self-dual representations
$\rho:G_F\to\GL_n(\Qlbar)$.  Using results about Artin conductors of
twists of $\rho$ by characters of $\gal(K/F)$, we prove a lower
bound for the order of vanishing of $L(\rho, K, T)$ at the center
point of the functional equation.  In the case of an abelian variety
$J$ over $F$ whose conductor satisfies a parity condition, this yields
a lower bound for the order of vanishing of $L(J/K, s)$ at $s=1$,
Corollary~\ref{cor:analytic-ranks}.

In Section~\ref{s:Berger}, we prove that a new class of surfaces has
the DPCT property introduced by Berger.  More precisely, we prove that
a surface associated to the curve $X$ given by an equation of the form
$f(x)-g(y)-t$ over $F=k(t)$ is dominated by a product of curves and
furthermore, this DPC property is preserved under pullback to the
field $K_q:=F(u)/(u^q-u-t)$ for all powers $q$ of $p$.  It follows
that the BSD conjecture holds for the Jacobians of this class of
curves $X$ over this Artin-Schreier tower of fields,
Corollary~\ref{cor:BSD}.  In Section~\ref{s:exs1}, we combine the
results from Sections~\ref{Sanalytic} and~\ref{s:Berger} to give a
broad array of examples of Jacobians over rational function fields
$k(u)$ which satisfy the BSD conjecture and have large Mordell-Weil
rank, see, e.g., Proposition~\ref{prop:higher-g-unbounded}.

Section~\ref{s:rank} contains a formula for the rank of $J_X$ over
$K_q$ in terms of the geometry of the fibers of bad reduction of $X$
and the rank of the group of homomorphisms between the Jacobians of
auxiliary curves. (The auxiliary curves are $\CC_q$ and $\DD_q$,
defined by equations $z^q-z=f(x)$ and $w^q-w=g(y)$, and we consider
homomorphisms which commute with the $\F_q$-Galois actions on $\CC_q$
and $\DD_q$, see Theorem~\ref{thm:ranks}.)  Section~\ref{s:exactrank}
contains three applications of the rank formula: first, by considering
cases where $\CC_q$ is ordinary and $\DD_q$ has $p$-rank $0$, we
construct examples of curves $X$ over $F$ with arbitrary genus for
which the rank of $J_X$ over $K_q$ is bounded independent of $q$;
second, looking at the case when $f=g$, we construct examples of
curves $X$ over $F$ with arbitrary genus for which the rank of $X$
over $K_q$ goes to infinity with $q$; third, we combine the lower
bound for the analytic rank and the rank formula to deduce the
existence of supersingular factors of Jacobians of Artin-Schreier
curves.

Finally, in Section~\ref{s:explicit}, we construct explicit points and
compute heights for two examples.  When $q \equiv 2 \bmod 3$, the
isotrivial elliptic curve $E$ defined by $Y^2+tY=X^3$ has rank
$2(q-1)$ over $K_q=\F_{q^2}(u)$ where $u^q-u=t$.  We construct a
subgroup of finite index in the Mordell-Weil group, and we conjecture
that the index is $|\sha(E/K_q)|^{1/2}$ (which is known to be finite
in this case).  For $b\not\in\{0,1,-1\}$, the non-isotrivial curve
$Y^2=X(X+16b^2)(X+t^2)$ has rank $q-1$ over $K_q$, and again we
construct a subgroup of finite index in the Mordell-Weil group.  In
this case, the lattice generated by $q-1$ explicit points is in a
certain sense a perturbation of the lattice $A_{q-1}^*$ where the
fluctuations are determined by point counts on another family of
elliptic curves.  This rather exotic situation has, to our knowledge,
not appeared in print before.

An appendix, Section~\ref{s:AS-covers}, collects all the results we
need about ramification, Newton polygons, and endomorphism algebras of
Artin-Schreier curves.

Figure~\ref{f:leitfaden} shows dependencies between the sections.  A
dashed line indicates a very mild dependency which can be ignored to
first approximation, whereas a solid line indicates a more significant
dependency.  We have omitted dependencies to the appendix; these exist
in Sections~2, 6, and 7, and at one place in Section~3.

We would like to thank Chris Hall for useful data about $L$-functions
of Artin-Schreier extensions and stimulating conversations about the
topics in this paper.  Thanks also to the referee for a critical
reading of the paper.  The first author was partially supported by NSF
grant DMS-11-01712.

\section{Analytic ranks} \label{Sanalytic} 
In this section, we use results from \cite{Ulmer07b} to show that
analytic ranks are often large in Artin-Schreier extensions.  The main
result is Corollary~\ref{cor:analytic-ranks}.

\subsection{Notation}\label{ss:notation}
Let $p$ be a prime number, let $\Fp$ be the field of $p$ elements, and
let $k$ be a finite field of characteristic $p$.  We write $r=|k|$ for
the cardinality of $k$.  Let $F=k(\CC)$ be the function field of a
smooth, projective, irreducible curve $\CC$ over $k$.  Let $F^{sep}$
be a separable closure of $F$.  We write $\Fpbar$ for the algebraic
closure of $\Fp$ in $F^{sep}$.  Let $G_F=\gal(F^{sep}/F)$ be the
Galois group of $F$.

Let $\ell\neq p$ be a prime number and let $\Qlbar$ be an algebraic
closure of the $\ell$-adic numbers.  Fix a representation
$\rho:G_F\to\GL_n(\Qlbar)$ satisfying the hypotheses of
\cite[\S4.2]{Ulmer07b}.  In particular, $\rho$ is assumed to be
self-dual of some weight $w$ and sign $-\epsilon$.  When $\epsilon=1$
we say $\rho$ is symplectic and when $\epsilon=-1$ we say $\rho$ is
orthogonal.

The representation $\rho$ gives rise to an $L$-function $L(\rho,F,T)$
given by an Euler product as in \cite[\S4.3]{Ulmer07b}.  We write
$L(\rho,K,T)$ for $L(\rho|_{G_K},K,T)$ for any finite extension $K$ of
$F$ contained in $F^{sep}$.

In \cite[\S4]{Ulmer07b}, we studied the order of vanishing of
$L(\rho,K,T)/L(\rho,F,T)$ at the center point $T= r^{-(w+1)/2}$ when
$K/F$ is a Kummer extension. Here we want to study the analogous order
when $K/F$ is an Artin-Schreier extension.

\subsection{Extensions}\label{ss:extensions}
Let $q$ be a power of $p$ and write $\wp_q(z)$ for the polynomial
$z^q-z$.  We will consider field extensions $K$ of $F$ of the form
\begin{equation}\label{eq:K}
K=K_{\wp_q,f}=F[z]/(\wp_q(z)-f)
\end{equation}
for $f\in F \setminus k$.  We assume throughout that that $\Fpbar K$
is a field, a condition which is guaranteed when $f$ has a pole of
order prime to $p$ at some place of $F$.  As described in
Lemma~\ref{l:ASgalois}, under this assumption, the degree $q$ field
extension $K/F$ is ``geometrically abelian'' in the sense that $\Fpbar
K/\Fpbar F$ is Galois with abelian Galois group.  In fact, setting
$H=\gal(\Fpbar K/\Fpbar F)$, we have a canonical isomorphism $H\cong
\F_q$, where $\Fq$ is the subfield of $F^{sep}$ of cardinality $q$.
The element $\alpha\in\F_q$ corresponds to the automorphism of $\Fpbar
K$ which sends the class of $z$ in \eqref{eq:K} to $z+\alpha$.

It will be convenient to consider a more general class of
geometrically abelian extensions whose Galois groups are elementary
abelian $p$-groups.  Suppose that $A$ is a monic, separable, additive
polynomial, in other words a polynomial of the form
$$A(z)=z^{p^\nu}+\sum_{i=0}^{\nu-1}a_iz^{p^i}$$
with $a_i\in\Fpbar$ and $a_0\neq0$.  Recall from
Subsection~\ref{ss:additive} that there is a bijection between such
polynomials $A$ and subgroups of $\Fpbar$ which associates to $A$ the
group $H_A$ of its roots.  The field generated by the coefficients of
$A$ is the field of $p^\mu$ elements, where $p^\mu$ is the smallest
power of $p$ such that $H_A$ is stable under the $p^\mu$-power
Frobenius.

Suppose $f\in F$ has a pole of order prime to $p$ at some place of $F$
and that $A$ has coefficients in $k$.  Then we have a field extension
$$K=K_{A,f}=F[x]/(A(z)-f).$$
It is geometrically Galois over $F$, with $\gal(\Fpbar K/\Fpbar
F)$ canonically isomorphic to $H_A$.  

By Lemma~\ref{Ladditive}, if $A$ has roots in $\Fq$, then there exists
another monic, separable, additive polynomial $B$ such that the
composition $A\circ B$ equals $\wp_q$.  Furthermore, this implies that
$K_{A,f}$ is a subfield of $K_{\wp_q,f}$ and that $\gal(\Fpbar
K_{A,f}/\Fpbar F)$ is a quotient of $\Fq$, namely $B(\Fq)$.  In
particular, for many questions, we may reduce to the case where
$K_{A,f}$ is the Artin-Schreier extension $K_{\wp_q,f}$.

\subsection{Characters}\label{ss:chars}
Let $K=K_{\wp_q,f}$ be an Artin-Schreier extension of $F$ as in
Subsection~\ref{ss:extensions}, and let $H=\gal(\Fpbar K/\Fpbar
F)\cong\Fq$.  Fix once and for all a non-trivial additive character
$\psi_0:\Fp\to\Qlbar^\times$.  Let $\hat H=\Hom(H,\Qlbar^\times)$ be
the group of $\Qlbar$-valued characters of $H$.  Then we have an
identification $\hat H\cong\F_q$ under which $\beta\in\F_q$
corresponds to the character $\chi_\beta:H\to\Qlbar^\times$,
$\alpha\mapsto\psi_0(\tr_{\F_q/\Fp}(\alpha\beta))$.

Next we consider actions of $G_k=\gal(\kbar/k)$ on $H$ and $\hat H$.
To define them, consider the natural projection $G_F\to G_k$, and let
$\Phi$ be any lift of the (arithmetic) generator of $G_k$, namely the
$r$-power Frobenius.  Using this lift, $G_k$ acts on $H=\gal(\Fpbar
K/\Fpbar F)$ on the left by conjugation, and it is easy to see that
under the identification $H\cong \F_q$, $\Phi$ acts on $\F_q$ via the
$r$-th power Frobenius.

We also have an action of $G_k$ on $\hat H$ on the right by
precomposition:
$(\chi_\beta)^{\Phi}(\alpha)=\chi_\beta(\Phi(\alpha))=\chi_\beta(\alpha^r)$.
Since
$$\tr_{\F_q/\Fp}(\alpha^r\beta)=\tr_{\F_q/\Fp}(\alpha\beta^{r^{-1}})$$
we see that $(\chi_\beta)^{\Phi}=\chi_{\beta^{r^{-1}}}$.  

If $A$ is a monic, separable, additive polynomial with coefficients in
$k$ and group of roots $H_A$, then the character group of $H_A$ is
naturally a subgroup of $\hat H$, and it is stable under the $r$-power
Frobenius.  More precisely, by Lemma~\ref{Ladditive}(2), $H_A$ is the
quotient $B(\Fq)$ of $\Fq$, and so its character group is identified
with $(\ker B)^\perp=(\im A)^\perp$ where the orthogonal complements
are taken with respect to the trace pairing
$(x,y)\mapsto\tr_{\Fq/\Fp}(xy)$.

As seen in Example~\ref{ex:A}, if $r$ is a power of an odd prime $p$
and $A(z)=z^{r^\nu}+z$, then the group $H_A$ of roots of $A$ generates
$\Fq$ where $q=r^{2\nu}$.  In this case, $A \circ B =\wp_q$ when
$B=\wp_{r^\nu}$.  If $f\in F$ has a pole of order prime to $p$ at some
place of $F$, then the field extension $K_{A,f}$ is a subextension of
$K_{\wp_q,f}$ and its character group is identified with $(\ker
B)^\perp=H_A$.

\subsection{Ramification and conductor}
We fix a place $v$ of $F$ and consider a decomposition subgroup
$G_v$ of $G=G_F$ at the place $v$ and its inertia subgroup $I_v$.

Recall from \cite[Chap.~IV]{SerreLF} that the upper numbering of
ramification groups is compatible with passing to a quotient, and so
defines a filtration on the inertia group $I_v$, which we denote by
$I_v^t$ for real numbers $t\ge0$.  By the usual convention, we set
$I_v^t=I_v$ for $-1< t\le0$.

Let $\rho:G_F\to\GL_n(\Qlbar)$ be a Galois representation as above,
acting on $V=\Qlbar^n$.  We denote the local exponent at a place $v$
of the conductor of $\rho$ by  $f_v(\rho)$.  We refer to
\cite{Serre70} for the definition.

Now let $\chi:G_F\to\Qlbar^\times$ be a finite order character.  We
say ``$\chi$ is more deeply ramified than $\rho$ at $v$'' if there
exists a non-negative real number $t$ such that $\rho(I_v^t)=\{id\}$
and $\chi(I_v^t)\neq\{id\}$.  In other words, $\chi$ is non-trivial
further into the ramification filtration than $\rho$ is.  Let $t_0$ be
the largest number such that $\chi$ is non-trivial on $I_v^{t_0}$ and
recall that $f_v(\chi)=1+t_0$ \cite[VI, \S2, Proposition 5]{SerreLF}.

\begin{lemma}\label{lemma:ramification}
  If $\chi$ is more deeply ramified than $\rho$ at $v$, then
$$f_v(\rho\tensor\chi)=\deg(\rho)\f_v(\chi).$$
\end{lemma}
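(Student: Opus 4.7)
The plan is to use the upper-numbering integral formula for the Artin conductor. For a continuous representation $W$ of $G_v$ on a finite-dimensional $\Qlbar$-vector space, Serre's formula reads
$$f_v(W)=\int_{-1}^{\infty}\operatorname{codim}_{\Qlbar}W^{I_v^t}\,dt,$$
with the convention that $I_v^t=I_v$ on $(-1,0]$. As a warm-up and a consistency check with the paragraph preceding the lemma, applying this to the one-dimensional representation $\chi$ gives integrand $1$ on $(-1,t_0]$ and $0$ on $(t_0,\infty)$, recovering $f_v(\chi)=1+t_0$.

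Next I would extract from the hypothesis the strict comparison of ramification depths. By assumption there exists $t_1\ge 0$ with $\rho(I_v^{t_1})=\{\mathrm{id}\}$ and $\chi(I_v^{t_1})\neq\{\mathrm{id}\}$; since $t_0$ is the largest $t$ on which $\chi$ is non-trivial, the second condition forces $t_1\le t_0$. Because the upper-numbering filtration is decreasing in the index, we obtain $I_v^{t_0}\subseteq I_v^{t_1}$, hence $\rho$ is trivial on $I_v^{t_0}$. Consequently, on $I_v^{t_0}$ the representation $\rho\otimes\chi$ acts by the scalar character $\chi\cdot \mathrm{id}_V$, and since $\chi|_{I_v^{t_0}}$ is non-trivial this representation has no nonzero invariants: $(V\otimes\chi)^{I_v^{t_0}}=0$. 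Monotonicity of fixed subspaces (a larger subgroup fixes a smaller subspace) then gives $(V\otimes\chi)^{I_v^t}=0$ for every $t\le t_0$, since then $I_v^t\supseteq I_v^{t_0}$.

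For $t>t_0$, the character $\chi$ is trivial on $I_v^t$, so $(V\otimes\chi)^{I_v^t}=V^{I_v^t}$; but $I_v^t\subseteq I_v^{t_0}$ and $\rho$ is trivial on $I_v^{t_0}$, so $V^{I_v^t}=V$ and the codimension vanishes. Putting the two ranges together, the integrand in the conductor formula for $\rho\otimes\chi$ equals $n=\deg\rho$ on $(-1,t_0]$ and $0$ on $(t_0,\infty)$, so
$$f_v(\rho\otimes\chi)=\int_{-1}^{t_0}n\,dt=n(1+t_0)=\deg(\rho)\,f_v(\chi).$$

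The main point requiring care is matching conventions for the upper-numbering filtration, particularly the fact that the inclusion $I_v^s\supseteq I_v^t$ for $s\le t$ is what makes ``non-triviality on $I_v^{t_0}$ implies non-triviality on $I_v^t$ for $t\le t_0$'' work, and the endpoint convention at $t=0$ that packages the tame contribution into the ``$+1$''. Once these conventions are fixed, the argument reduces to the observation that the diagonal action of a non-trivial character on a full tensor factor annihilates all invariants.
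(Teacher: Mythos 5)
Your argument is correct. Note that the paper itself does not prove this lemma: it declares it an easy exercise and defers to \cite{DokchitsersG} and \cite{UlmerC} for details, so there is no in-paper proof to compare against. Your route --- the upper-numbering integral formula $f_v(W)=\int_{-1}^{\infty}\operatorname{codim}W^{I_v^t}\,dt$, the deduction $t_1\le t_0$ from maximality of $t_0$, the observation that $\rho\otimes\chi$ acts on $I_v^{t_0}$ by a non-trivial scalar and hence has zero invariants there, and the two-range evaluation of the integrand as $\deg(\rho)$ on $(-1,t_0]$ and $0$ beyond --- is exactly the standard detailed argument the authors are alluding to. The only points worth flagging are foundational rather than logical: for an $\ell$-adic $\rho$ the conductor is usually defined as the tame part $\operatorname{codim}V^{I_v}$ plus the Swan conductor given by the break decomposition, and one should know (or cite) that this agrees with the integral formula you use; and the upper-numbering filtration has a semicontinuity convention that guarantees the set of $t$ with $\chi(I_v^t)\neq\{\mathrm{id}\}$ really is the half-open interval $(-1,t_0]$, which you implicitly use and which is consistent with the paper's own statement $f_v(\chi)=1+t_0$. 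With those conventions fixed, your proof is complete.
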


\begin{proof}
  This is an easy exercise and presumably well-known to experts.  It
  is asserted in \cite[Lemma~9.2(3)]{DokchitsersG}, and a detailed
  argument is given in \cite{UlmerC}.
\end{proof}

A particularly useful case of the lemma occurs when $\rho$ is tamely
ramified and $\chi$ is wildly ramified, e.g., when $\chi$ is an
Artin-Schreier character.

\subsection{Factoring $L(\rho,K,T)$}
Fix a monic, separable, additive polynomial $A$ with coefficients in
$k$ and a function $f\in F$ such that $f$ has a pole of order prime to
$p$ at some place of $F$.  Let $K=K_{A,f}$ be the corresponding
extension, whose geometric Galois group $\gal(\Fpbar K/\Fpbar F)$ is
canonically identified with the group $H=H_A$ of roots of $A$.  Let
$\Fq$ be the subfield of $F^{sep}$ generated by $H_A$.  Recall the
Galois representation $\rho$ fixed above.  In this section, we record
a factorization of the $L$-function $L(\rho,K,T)$.

In Subsection~\ref{ss:chars} above, we identified the character group
of $H$ with a subgroup of $\Fq$ which is stable under the $r$-power
Frobenius.  As in \cite[\S3]{Ulmer07b}, we write $o\subset\hat
H\subset\F_q$ for an orbit of the action of $\Fr_r$.  Note that the
cardinality of the orbit $o$ through $\beta\in\Fq$ is equal to the
degree of the field extension $k(\beta)/k$, and is therefore at most
$2\nu$.

As in \cite[\S4.4]{Ulmer07b}, we have a factorization
$$L(\rho,K,T)=\prod_{o\subset\hat H}L(\rho\tensor\sigma_o,F,T)$$
and a criterion for the factor $L(\rho\tensor\sigma_o,F,T)$ to have a
zero at $T=\epsilon r^{-(w+1)/2}$ (or more generally to be divisible
by a certain polynomial).

To unwind that criterion, we need to consider self-dual orbits.  More
precisely, note that the inverse of $\chi_\beta$ is
$(\chi_\beta)^{-1}=\chi_{-\beta}$.  Thus an orbit $o$ is self-dual in
the sense of \cite[\S3.4]{Ulmer07b} if and only if there exists a
positive integer $\nu$ such that $\beta^{r^\nu}=-\beta$ for all
$\beta\in o$.  The trivial orbit $o=\{0\}$ is of course self-dual in
this sense.  To ensure that that there are many other self-dual
orbits, we may assume $r$ is odd and take $A(x)=x^{r^{\nu}}+x$ for
some positive integer $\nu$.  Then if $\beta$ is a non-zero root of
$A$, the orbit through $\beta$ is self-dual.  Since the size of this
orbit is at most $2\nu$, we see that there are at least
$(r^\nu-1)/(2\nu)$ non-trivial self-dual orbits in this case.

We also note that if $\beta\neq0$, then the order of the character
$\chi_\beta$ is $p$ and since we are assuming $r$, and thus $p$, is
odd, we have that $\chi_\beta$ has order $>2$.  Summarizing, we have
the following.

\begin{lemma}\label{lemma:factorization}
  Let $k$ be a finite field of cardinality $r$ and characteristic $p
  >2$.  Suppose $A(z)=z^{r^\nu}+z$.  Suppose $f\in F$ has a pole of
  order prime to $p$, and let $K=K_{A,f}$.  Let $\rho$ be a
  representation of $G_F$ as in Subsection~\ref{ss:notation}.  Then we
  have a factorization
$$L(\rho,K,T)=\prod_{o\subset\hat H}L(\rho\tensor\sigma_o,F,T)$$
where the product is over the orbits of the $r$-power Frobenius on the
roots of $A$.  Aside from the orbit $o=\{0\}$, there are at least
$(r^\nu-1)/2\nu$ orbits, each of which is self-dual, has cardinality
at most $2\nu$, and consists of characters of order $p>2$.
\end{lemma}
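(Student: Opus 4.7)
The plan is to assemble the lemma from four essentially independent pieces, each directly supplied by the setup in Subsections~\ref{ss:extensions} and~\ref{ss:chars} together with \cite[\S\S3--4]{Ulmer07b}. For the factorization itself, I would invoke the general result \cite[\S4.4]{Ulmer07b}: for a geometrically abelian extension $K/F$, there is a decomposition of $L(\rho, K, T)$ as a product over orbits of Frobenius on the character group $\hat H$. Under the identification $\hat H \cong H_A$ provided in Subsection~\ref{ss:chars} (pairing $\beta$ with $\chi_\beta(\alpha)=\psi_0(\tr_{\F_q/\Fp}(\alpha\beta))$), and since by the same subsection Frobenius acts on characters by $(\chi_\beta)^\Phi = \chi_{\beta^{r^{-1}}}$, orbits of Frobenius on $\hat H$ correspond bijectively with orbits of $\Fr_r$ on the set of roots of $A$. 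This yields the displayed product formula.

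Next, I would count roots and bound orbit sizes. The polynomial $A(z) = z^{r^\nu} + z$ has derivative identically $1$, hence is separable with exactly $r^\nu$ distinct roots; removing $\beta=0$ leaves $r^\nu - 1$ nonzero roots. Any such $\beta$ satisfies $\beta^{r^\nu} = -\beta$, so $\beta^{r^{2\nu}} = \beta$ and therefore $\beta \in \F_{r^{2\nu}}$. Its orbit under $\Fr_r$ thus has cardinality dividing $2\nu$, in particular at most $2\nu$. Partitioning the $r^\nu - 1$ nonzero roots into orbits of size at most $2\nu$ gives at least $(r^\nu - 1)/(2\nu)$ nontrivial orbits.

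Self-duality and character order are direct consequences of the same identity. The criterion of \cite[\S3.4]{Ulmer07b} recalled earlier in Subsection~\ref{ss:chars}---an orbit is self-dual if and only if some iterate $\beta^{r^j}$ equals $-\beta$---is satisfied with $j=\nu$ on every orbit through a nonzero root of $A$. For the order of $\chi_\beta$, the group $H \cong \Fq$ is elementary abelian of exponent $p$, so every character has order dividing $p$; for $\beta \neq 0$, the character $\chi_\beta$ is nontrivial, giving order exactly $p$, which is $>2$ by the standing hypothesis.

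No genuine obstacle is anticipated: the lemma is a bookkeeping consequence of the infrastructure built in Subsections~\ref{ss:notation}--\ref{ss:chars} combined with \cite[\S\S3.4, 4.4]{Ulmer07b}. The only mildly subtle point is checking that the Frobenius action on $\hat H$ (through $\beta \mapsto \beta^{r^{-1}}$) and on roots of $A$ (through $\beta \mapsto \beta^r$) produce the same orbit decomposition, which they do because the orbits of any cyclic group action are invariant under passage to the inverse generator.
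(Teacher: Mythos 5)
Your proposal is correct and follows essentially the same route the paper takes: the lemma is stated there as a summary of the preceding discussion, which likewise invokes the orbit factorization of \cite[\S4.4]{Ulmer07b}, identifies the character group of $\gal(\Fpbar K/\Fpbar F)$ with the roots of $A$ via the trace pairing, observes that every nonzero root satisfies $\beta^{r^\nu}=-\beta$ (hence lies in $\F_{r^{2\nu}}$, giving self-dual orbits of size at most $2\nu$ and the count $(r^\nu-1)/(2\nu)$), and notes that nontrivial characters of the elementary abelian group have order $p>2$. Your added remark that the Frobenius action on characters ($\beta\mapsto\beta^{r^{-1}}$) and on roots ($\beta\mapsto\beta^{r}$) yield the same orbit decomposition is a small point the paper leaves implicit, and it is handled correctly.
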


\subsection{Parity conditions}\label{ss:parity}
According to \cite[Thm.~4.5]{Ulmer07b}, $L(\rho\tensor\sigma_o,F,T)$
vanishes at $T=r^{-(w+1)/2}$ if $\rho$ is symplectic of weight $w$,
$o$ is a self-dual orbit, and if the degree of
$\cond(\rho\tensor\chi_\beta)$ is odd for one, and therefore all,
$\beta\in o$.  Thus to obtain a large order of vanishing, we should
arrange matters so that $\rho\tensor\chi_\beta$ satisfies the
conductor parity condition for many orbits $o$. This is not hard to do
using Lemma~\ref{lemma:ramification}.

Indeed, let $S$ be the set of places where $\chi_\beta$ is ramified,
and suppose that $\chi_\beta$ is more deeply ramified than $\rho$ at
each $v\in S$.  Suppose also that $\sum_{v\not\in S}f_v(\rho)\deg(v)$
is odd.  Then using Lemma~\ref{lemma:factorization} we have
$$\deg\cond(\rho\tensor\chi_\beta)=
\sum_{v\in S}\deg(\rho)f_v(\chi_\beta)\deg(v)+ \sum_{v\not\in
  S}f_v(\rho)\deg(v).$$ 
Since $\rho$ is symplectic, it has even degree, and so our assumptions
imply that $\deg\cond(\rho\tensor\chi_\beta)$ is odd.

\subsection{High ranks}
Putting everything together, we get results guaranteeing large
analytic ranks in Artin-Schreier extensions:

\begin{thm}\label{thm:analytic-ranks}
  Let $k$ be a finite field of cardinality $r$ and characteristic $p
  >2$.  Let $\nu \in {\mathbb N}$ and let $k'$ be the field of
  $q=r^{2\nu}$ elements.  Let $F=k(\CC)$ and
  $\rho:G_F\to\GL_n(\Qlbar)$ be as in Subsection~\ref{ss:notation}.
  Assume that $\rho$ is symplectically self-dual of weight $w$.
  Choose $f\in F$ with at least one pole of order prime to $p$.
  Suppose that either \textup{(1)} $K=K_{A,f}$ where $A(z)=z^{r^\nu}
  +z$, or \textup{(2)} $K=K_{\wp_q, f}$ where $\wp_q(z)=z^q-z$ as in
  Subsection~\ref{ss:extensions}.
Let $S$ be set of place of $F$ where $K/F$ is ramified and  
suppose that $\rho$ is at worst tamely ramified at each place $v\in S$.
Suppose also that $\sum_{v\not\in S}f_v(\rho)\deg(v)$ is odd.  Then
$$\ord_{s=(w+1)/2}\frac{L(\rho,K,s)}{L(\rho,F,s)}\ge(r^\nu-1)/(2\nu)$$
and
$$\ord_{s=(w+1)/2}\frac{L(\rho,k'K,s)}{L(\rho,k'F,s)}\ge(r^\nu-1).$$
\end{thm}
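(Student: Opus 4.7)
The plan is to combine the three ingredients assembled in the preceding subsections: the factorization of $L(\rho,K,T)$ into twisted $L$-functions over $F$ (Lemma~\ref{lemma:factorization}), the criterion \cite[Thm.~4.5]{Ulmer07b} for each twisted factor to vanish at the center point, and the conductor computation of Subsection~\ref{ss:parity} (resting on Lemma~\ref{lemma:ramification}) which verifies the parity hypothesis of that criterion. The bound will come from simply counting non-trivial self-dual orbits.

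Concretely, I would first handle case (1), where $K=K_{A,f}$ with $A(z)=z^{r^\nu}+z$. By Lemma~\ref{lemma:factorization},
\begin{equation*}
L(\rho,K,T)/L(\rho,F,T)=\prod_{o\neq\{0\}}L(\rho\otimes\sigma_o,F,T),
\end{equation*}
with at least $(r^\nu-1)/(2\nu)$ non-trivial orbits $o$, all self-dual and consisting of characters of odd order $p$. Fix such an orbit $o$ and pick $\chi_\beta\in o$. Every $\chi_\beta$ (for $\beta\neq 0$) is a nontrivial Artin-Schreier character, hence wildly ramified precisely at the places $S$ where $K/F$ is ramified; by hypothesis $\rho$ is at worst tamely ramified at each $v\in S$, so $\chi_\beta$ is more deeply ramified than $\rho$ at $v$ and Lemma~\ref{lemma:ramification} yields $f_v(\rho\otimes\chi_\beta)=\deg(\rho)\,f_v(\chi_\beta)$. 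Away from $S$ the character $\chi_\beta$ is unramified, so $f_v(\rho\otimes\chi_\beta)=f_v(\rho)$. Therefore
\begin{equation*}
\deg\cond(\rho\otimes\chi_\beta)=\deg(\rho)\sum_{v\in S}f_v(\chi_\beta)\deg(v)+\sum_{v\notin S}f_v(\rho)\deg(v).
\end{equation*}
Since $\rho$ is symplectic, $\deg(\rho)$ is even, so the first sum is even; by assumption the second sum is odd. Thus $\deg\cond(\rho\otimes\chi_\beta)$ is odd. Since $\rho$ is symplectically self-dual of weight $w$ and $o$ is self-dual, \cite[Thm.~4.5]{Ulmer07b} applies and forces $L(\rho\otimes\sigma_o,F,T)$ to vanish at $T=r^{-(w+1)/2}$. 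Summing over the $\ge(r^\nu-1)/(2\nu)$ non-trivial self-dual orbits gives the first inequality.

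For the second inequality, extend scalars to the field $k'$ of $q=r^{2\nu}$ elements. Since the roots of $A$ lie in $\F_q\subseteq k'$, the $|k'|$-power Frobenius acts trivially on $\hat H$, so every non-zero character $\chi_\beta$ is its own orbit. The same conductor-parity argument (carried out over $k'F$) shows each of the $r^\nu-1$ individual non-trivial twisted factors vanishes at the center, yielding the stronger bound. Finally, case (2) reduces to case (1): by Lemma~\ref{Ladditive} there is a monic separable additive $B$ with $A\circ B=\wp_q$, so $K_{A,f}\subseteq K_{\wp_q,f}$ and consequently $L(\rho,K_{A,f},T)$ divides $L(\rho,K_{\wp_q,f},T)$ in $\Qlbar[T]$; the inequality transfers verbatim. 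The only real obstacle is bookkeeping: one must be careful that the "more deeply ramified" hypothesis of Lemma~\ref{lemma:ramification} holds at \emph{every} $v\in S$ (which is precisely where the tameness assumption on $\rho$ is used), and that the cancellation of the $S$-contribution in the conductor sum really is due to the evenness of $\deg(\rho)$ rather than any delicate feature of $\chi_\beta$.
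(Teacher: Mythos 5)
Your proposal is correct and follows essentially the same route as the paper: the factorization of Lemma~\ref{lemma:factorization}, the conductor-parity computation via Lemma~\ref{lemma:ramification} and the tameness hypothesis, the vanishing criterion of \cite[Thm.~4.5]{Ulmer07b} applied to each non-trivial self-dual orbit, the further factorization over $k'$ for the second bound, and the reduction of case (2) to case (1) via the inclusion $K_{A,f}\subseteq K_{\wp_q,f}$. The only cosmetic difference is that you re-run the parity argument over $k'F$ for the second inequality, whereas the paper simply notes that each factor $L(\rho\otimes\chi_\beta,k'F,T)$ inherits divisibility by $1-|k'|^{(w+1)/2}T$ from the divisibility of $L(\rho\otimes\sigma_o,F,T)$ by $1-(r^{(w+1)/2}T)^{|o|}$; both are fine.
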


\begin{proof}
  For Case (1), the first inequality is an easy consequence of the
  preceding subsections and \cite[Thm.~4.5]{Ulmer07b}.  Indeed, by
  Lemma~\ref{lemma:factorization}, we have a factorization
$$L(\rho,K,T)=\prod_{o\subset\hat H}L(\rho\tensor\sigma_o,F,T)$$
where the product is over the orbits of the $r$-power Frobenius on the
roots of $A$.  The factor on the right corresponding to the orbit
$o=\{0\}$ is just $L(\rho,F,T)$, and by the lemma, all the other
orbits are self-dual and consist of characters of order $>2$.  The
hypotheses on the ramification of $\rho$ allow us to apply
Lemma~\ref{lemma:ramification} to conclude that the parity of
$\deg\cond(\rho\tensor\chi_\beta)$ is odd for all roots $\beta\neq0$
of $A$.  Thus \cite[Thm.~4.5]{Ulmer07b} implies that each of the
factors $L(\rho\tensor\sigma_o,F,T)$ is divisible by
$1-(r^{(w+1)/2}T)^{|o|}$, and in particular, has a zero at
$T=r^{-(w+1)/2}$.  Since there are $(r^\nu-1)/2\nu$ non-trivial
orbits, we obtain the desired lower bound.

Over any extension $k'$ of $k$ of degree divisible by $2\nu$, we have
a further factorization
$$L(\rho\tensor\sigma_o,k'F,T)=
\prod_{\beta\in o}L(\rho\tensor\chi_\beta,k'F,T)$$ 
and each factor $L(\rho\tensor\chi_\beta,k'F,T)$ is divisible by
$(1-|k'|^{(w+1)/2}T)$ and thus vanishes at $s=(w+1)/2$.  This
establishes the second lower bound in Case (1).

The lower bounds for Case (2) are an immediate consequence of those
for Case (1) since $K_{A,f}$ is a subextension of $K_{\wp_q,f}$ by
Example~\ref{ex:A}.
\end{proof}

\begin{rem}
If $F=\Fp(t)$ and $f=t$, then the Artin-Schreier
  extension given by $u^q-u=t$ is again a rational function field.
  Thus starting with a suitable $\rho$ and taking a large degree
  Artin-Schreier extension, or by taking multiple extensions, we
  obtain another proof of unbounded analytic ranks over the fixed
  ground field $\Fp(u)$.
\end{rem}

As an illustration, we specialize Theorem~\ref{thm:analytic-ranks} to
the case where $\rho$ is given by the action of $G_F$ on the Tate
module of an abelian variety over $F$.

\begin{cor}\label{cor:analytic-ranks}
  Let $k$ be a finite field of cardinality $r$ and characteristic $p
  >2$.  Let $\nu \in {\mathbb N}$ and let $k'$ be the field of
  $q=r^{2\nu}$ elements.  Suppose $J$ is an abelian variety over a
  function field $F=k(\CC)$ as in Subsection~\ref{ss:notation}.
  Choose $f\in F$ with at least one pole of order prime to $p$.
  Suppose that either \textup{(1)} $K=K_{A,f}$ where $A(z)=z^{r^\nu}
  +z$, or \textup{(2)} $K=K_{\wp_q, f}$ where $\wp_q(z)=z^q-z$ as in
  Subsection~\ref{ss:extensions}.  Let $S$ be the set of places of $F$
  where $K/F$ is ramified.  Suppose that $J$ is at worst tamely
  ramified at all places in $S$ and that the degree of the part of the
  conductor of $J$ away from $S$ is odd.  Then
$$\ord_{s=1}L(J/K,s)\ge(r^\nu-1)/(2\nu)$$
and
$$\ord_{s=1}L(J/k'K,s)\ge(r^\nu-1).$$
\end{cor}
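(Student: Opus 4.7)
The plan is to derive the corollary as a direct application of Theorem~\ref{thm:analytic-ranks} with $\rho$ taken to be the $\ell$-adic representation attached to $J$. Fix a prime $\ell\neq p$ and let $\rho:G_F\to\GL_n(\Qlbar)$ be the representation of $G_F$ on $V_\ell J:=T_\ell J\tensor_{\Zl}\Qlbar$ (here $n=2\dim J$). I would first verify the structural hypotheses of Subsection~\ref{ss:notation}: the Weil pairing on the Tate module provides a $G_F$-equivariant non-degenerate alternating form $V_\ell J\times V_\ell J\to\Qlbar(-1)$, which after normalization exhibits $\rho$ as symplectically self-dual of weight $w=1$. The center point of the functional equation is then $T=r^{-(w+1)/2}=r^{-1}$, which under the usual substitution $T=r^{-s}$ corresponds precisely to $s=1$.

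Next I would check that the ramification/conductor hypotheses of the corollary translate to those of Theorem~\ref{thm:analytic-ranks} for this $\rho$. By the standard comparison between the conductor of an abelian variety and the Artin conductor of its $\ell$-adic Tate module, one has $f_v(\rho)=f_v(J)$ at every place $v$, and moreover $\rho$ is tamely ramified at $v$ if and only if $J$ has either good reduction or tame (unipotent) reduction at $v$. Thus the hypothesis that $J$ is at worst tamely ramified at each $v\in S$ is precisely the hypothesis that $\rho$ is tamely ramified on $S$, and the parity hypothesis $\sum_{v\not\in S}f_v(\rho)\deg(v)\equiv 1\pmod 2$ is exactly the parity condition on $\cond(J)$ away from $S$.

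Finally, I would identify $L(\rho,K,T)=L(J/K,T)$ under $T=r^{-s}$, which is immediate from the definition of $L(\rho,K,T)$ as an Euler product over the local representations $V_\ell J^{I_v}$. Theorem~\ref{thm:analytic-ranks} then yields
\[
\ord_{s=1}\frac{L(J/K,s)}{L(J/F,s)}\ge (r^\nu-1)/(2\nu)
\qquad\text{and}\qquad
\ord_{s=1}\frac{L(J/k'K,s)}{L(J/k'F,s)}\ge r^\nu-1.
\]
Since $L(J/F,s)$ and $L(J/k'F,s)$ are polynomials in $r^{-s}$ and $|k'|^{-s}$ respectively and hence have non-negative order of vanishing at $s=1$, the two stated lower bounds follow by adding $\ord_{s=1}L(J/F,s)\ge 0$ (respectively $\ord_{s=1}L(J/k'F,s)\ge 0$) to both sides.

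There is no real obstacle; the only point that requires care is the bookkeeping around the Weil pairing to confirm the symplectic self-duality and weight, and the identification of the representation-theoretic conductor with the arithmetic conductor of $J$. Both are classical and allow Theorem~\ref{thm:analytic-ranks} to be invoked directly.
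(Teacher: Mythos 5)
Your proposal is correct and matches the paper's route exactly: the corollary is obtained by specializing Theorem~\ref{thm:analytic-ranks} to the representation of $G_F$ on $V_\ell J$, which is symplectically self-dual of weight $1$ via the Weil pairing, with the conductor and tameness hypotheses translating verbatim. Your extra remark that $\ord_{s=1}L(J/F,s)\ge 0$ (so the bound on the ratio gives the bound on $L(J/K,s)$ itself) is the only bookkeeping the paper leaves implicit, and you handle it correctly.
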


\subsection{Orthogonal $\rho$ and supersingularity}
Consider the set-up of Theorem~\ref{thm:analytic-ranks}, except that
we assume that $\rho$ is orthogonally self-dual instead of
symplectically self-dual, and we replace the parity condition there
with the assumption that
$$\deg(\rho)\sum_{v\in S}(-\ord_v(f)+1)\deg(v)+\sum_{v\not\in
  S}f_v(\rho)\deg(v)$$ 
is odd.  Then \cite[Thm.~4.5]{Ulmer07b} implies that if $o$ is an
orbit with $o\neq\{0\}$, then $L(\rho\tensor\sigma_o,F,T)$ is
divisible by $1+(r^{(w+1)/2}T)^{|o|}$.  In particular, over a large
enough finite extension $k'$ of $k$, at least $r^\nu-1$ of the inverse
roots of the $L$-function $L(\rho,K,T)/L(\rho,F,T)$ are equal to
$|k'|^{(w+1)/2}$.

We apply this result to the case when $\rho$ is the trivial
representation to conclude that the Jacobians of certain
Artin-Schreier curves have many copies of a supersingular elliptic
curve as isogeny factors.  This implies that the slope 1/2 occurs with
high multiplicity in their Newton polygons as defined in
Subsection~\ref{ss:slopes}.  However, as explained in
Subsection~\ref{ss:slopesandss}, the occurrence of slope 1/2 in the
Newton polygon of an abelian variety usually does not give any
information about whether the abelian variety has a supersingular
elliptic curve as an isogeny factor.  This gives the motivation for
this result.  More precisely:

\begin{prop}\label{prop:ss-factors}
  With the notation of Corollary~\ref{cor:analytic-ranks}, write
$$\dvsr_\infty(f)=\sum_{i=1}^m a_iP_i$$ 
where the $P_i$ are distinct $\kbar$-valued points of $\CC$.  Assume
that $p\nmid a_i$ for all $i$ and that $\sum_{i=1}^m(a_i+1)$ is odd.
Let $\JJ$ (resp.\ $\JJ_{A,f}$, $\JJ_{\wp_q, f}$) be the Jacobian of
$\CC$ (resp.\ the cover $\CC_{A,f}$ of $\CC$ defined by $A(z)=f$, the
cover $\CC_{\wp_q,f}$ of $\CC$ defined by $\wp_q(z)=f$).  Then up to
isogeny over $\kbar$, the abelian varieties $\JJ_{A,f}/\JJ$ and
$\JJ_{\wp_q, f}/\JJ$ each contain at least $(r^\nu-1)/2$ copies of a
supersingular elliptic curve.
\end{prop}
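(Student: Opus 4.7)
The plan is to apply the orthogonal variant of Theorem~\ref{thm:analytic-ranks}, spelled out in the discussion preceding the proposition, to the trivial one-dimensional representation $\rho=1$ of $G_F$. This $\rho$ is orthogonally self-dual of weight $w=0$, of degree $1$, and unramified (hence tamely ramified) at every place. Since $f_v(\rho)=0$ for all $v$, the parity condition becomes the requirement that $\sum_{v\in S}(-\ord_v(f)+1)\deg(v)$ be odd, where $S$ is the set of places of $F$ at which $K/F$ ramifies, i.e.\ the closed points of $\CC$ under the $P_i$. Grouping $\kbar$-points over each closed place shows this sum equals $\sum_{i=1}^m(a_i+1)$, which is odd by hypothesis. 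The orthogonal output then yields: over any sufficiently large finite extension $k'/k$, at least $r^\nu-1$ of the inverse roots of $L(1,k'K,T)/L(1,k'F,T)$ equal $|k'|^{1/2}=r^\nu$.

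I would then translate this into information about Jacobians. For the trivial representation, $L(1,K,T)$ is the zeta function of the smooth projective model $\CC_{A,f}$ of $K$, and $L(1,F,T)$ is that of $\CC$, so their quotient equals $P_1(\CC_{A,f},T)/P_1(\CC,T)$, which is the characteristic polynomial of Frobenius on $T_\ell(\JJ_{A,f}/\JJ)\otimes\Ql$ (up to the standard duality). Consequently the $|k'|$-power Frobenius acts on $T_\ell(\JJ_{A,f}/\JJ)\otimes\Ql$ with $r^\nu$ as an eigenvalue of multiplicity at least $r^\nu-1$.

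The final step is Honda--Tate. Since $r^\nu=\sqrt{|k'|}$ is a rational Weil $|k'|$-number, any simple abelian variety $B/k'$ having $r^\nu$ as a Frobenius eigenvalue must have Weil number $\pi_B=r^\nu$ (its Galois orbit is a singleton), so $B$ is the supersingular elliptic curve $E_0/k'$ of trace $2r^\nu$, and on $T_\ell(E_0)$ the eigenvalue $r^\nu$ occurs with multiplicity exactly $2$. Hence $\JJ_{A,f}/\JJ\sim_{k'} E_0^{m_1}\times B'$ with $B'$ containing no $E_0$-factor, and matching multiplicities gives $2m_1\ge r^\nu-1$; since $r^\nu-1$ is even (as $p>2$), we obtain $m_1\ge(r^\nu-1)/2$. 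Base changing to $\kbar$ merges $E_0$ with the unique $\kbar$-isogeny class of supersingular elliptic curves, proving the claim for $\JJ_{A,f}/\JJ$. For $\JJ_{\wp_q,f}/\JJ$, the inclusion $K_{A,f}\subset K_{\wp_q,f}$ from Example~\ref{ex:A} realizes $\JJ_{A,f}/\JJ$ as an isogeny factor of $\JJ_{\wp_q,f}/\JJ$ (via pullback of divisors along the cover), and the same lower bound descends. The main obstacle is the Honda--Tate step: one must verify that no other simple abelian variety absorbs part of the $r^\nu$-eigenspace, which is clean here because $r^\nu$ is a rational Weil number.
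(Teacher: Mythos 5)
Your proposal is correct and follows essentially the same route as the paper's (deliberately brief) proof: apply the orthogonal variant of the analytic-rank argument to the trivial representation to force at least $r^\nu-1$ inverse roots of the quotient of zeta numerators to equal $|k'|^{1/2}$ over a large extension $k'$, then invoke Honda--Tate to convert each pair of such roots into a supersingular elliptic isogeny factor, and pass from $\JJ_{A,f}$ to $\JJ_{\wp_q,f}$ via the inclusion $K_{A,f}\subset K_{\wp_q,f}$. Your write-up merely fills in details the paper leaves implicit (the verification of the parity condition $\sum_i(a_i+1)$, and why no other simple factor can absorb the eigenvalue $r^\nu$), and these details are handled correctly.
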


\begin{proof}
We give only a brief sketch, since this result plays a minor role
in the rest of the paper.  An argument parallel to that in the proof of
Theorem~\ref{thm:analytic-ranks} shows that the numerator of the zeta
function of $\CC_{A,f}$ divided by that of $\CC$ is divisible by 
$$\left(1+r^\nu T^{2\nu}\right)^{(r^\nu-1)/(2\nu)}.$$
Thus over a large extension $k'$ of $k$, at least $r^\nu-1$ of the
inverse roots of the zeta function are equal to $|k'|^{1/2}$.
Honda-Tate theory then shows that the Jacobian has a supersingular
elliptic curve as an isogeny factor with multiplicity at least
$(r^\nu-1)/2$.
\end{proof}

We will see in Section~\ref{s:AS-covers} that the lower bound of
Proposition~\ref{prop:ss-factors} is not always sharp.

\subsection{The case $p=2$}
The discussion of the preceding subsections does not apply when $p=2$
since in that case all characters of $H$ have order 2. To get high
ranks when $p=2$, we can use the variant of \cite[Thm.~4.5]{Ulmer07b}
suggested in \cite[4.6]{Ulmer07b}.  In this variant, instead of
symmetric or skew-symmetric matrices, we have orthogonal matrices, and
zeroes are forced because $1$ is always an eigenvalue of an orthogonal
matrix of odd size, and $\pm1$ are always eigenvalues of an orthogonal
matrix of even size and determinant $-1$.  The details are somewhat
involved and tangential to the main concerns of this paper, so we will
not include them here.




\subsection{Artin-Schreier-Witt extensions}
The argument leading to Theorem~\ref{thm:analytic-ranks} generalizes
easily to the situation where we replace Artin-Schreier extensions
with Artin-Schreier-Witt extensions.  This generalization is relevant
even if $p=2$.  We sketch very roughly the main points.

Let $W_n(F)$ be the ring of Witt vectors of length $n$ with
coefficients in $F$.  We choose ${\bf f}\in W_n(F)$ and we always
assume that its first component $f_1$ is such that $x^q-x-f_1$ is
irreducible in $F[x]$ and so defines an extension of $F$ of degree
$q$.  Then adjoining to $F$ the solutions (in $W_n(F^{sep})$) of the
equation $\Fr_q({\bf x})-{\bf x}={\bf f}$ yields a field extension of
$F$ which is geometrically Galois with group $W_n(\Fq)$.  The
character group of this Galois group can be identified with $W_n(\Fq)$
and we have an action of $G_k$ (i.e., the $r$-power Frobenius where
$r=|k|$) on the characters of this group.

Choose a positive integer $\nu$ and consider the situation above where
$q=r^{2\nu}$.  We claim that there are $r^{n\nu}$ solutions in
$W_n(\Fq)$ to the equation $\Fr_{r^\nu}({\bf x})+{\bf x}=0$.  For
$p>2$, this is clear---just take Witt vectors whose entries satisfy
$x^{r^\nu}+x=0$.  For $p=2$, the entries of $-{\bf x}$ are messy
functions of those of ${\bf x}$, so we give a different argument.
Namely, let us proceed by induction on $n$.  For $n=1$, ${\bf
  x}_1=(1)$ is a solution.  Suppose that ${\bf
  x}_{n-1}=(a_1,\dots,a_{n-1})$ satisfies $\Fr_{r^\nu}({\bf x})+{\bf
  x}=0$.  Then we have
$$\Fr_{r^\nu}(a_1,\dots,a_{n-1},0)+(a_1,\dots,a_{n-1},0)=(0,\dots,0,b_n)$$
and it is easy to see that $b_n$ lies in the field of $r^{\nu}$ elements.
We can thus solve the equation $a_n^{r^\nu}+a_n=b_n$, and then ${\bf
  x}_n=(a_1,\dots,a_n)$ solves $\Fr_{r^\nu}({\bf x}_n)+{\bf x}_n=0$.  With
one solution which is a unit in $W_n(\Fq)$ in hand, we remark that 
any multiple of our solution by an element of $W_n(\F_{r^\nu})$ is
another solution, so we have $r^{n\nu}$ solutions in all.  

Next we note that the self-dual orbits $o\subset W_n(\Fq)$ (i.e.,
those orbits stable under ${\bf x}\mapsto -{\bf x}$) are exactly the
orbits whose elements satisfy $\Fr_{r^\nu}({\bf x})+{\bf x}=0$.  These
orbits are of size at most $2\nu$.  If $p>2$, all but the orbit
$o=\{0\}$ consist of characters of order $>2$, whereas if $p=2$, all
but $p^\nu$ of the orbits consist of characters of order $>2$.  Thus
taking $p>2$, or $p=2$ and $n>1$, we have a plentiful supply of
orbits which are self-dual and consist of characters of order $>2$.

The last ingredient needed to ensure a high order of vanishing for the
$L$-function is a conductor parity condition.  This can be handled in
a manner quite parallel to the cases considered in
Subsection~\ref{ss:parity}.  Namely, we choose ${\bf f}\in W_n(F)$ so
that at places where $\rho$ and characters $\chi$ are ramified, $\chi$
should be so more deeply, and the remaining part of the conductor of
$\rho$ should have odd degree.  Then $\rho\tensor\chi$ will have
conductor of odd degree.

\section{Surfaces dominated by a product of curves in Artin-Schreier
  towers}\label{s:Berger}

In this section, we extend a construction of Berger to another class
of surfaces, following \cite[\S\S 4-6]{Ulmer13a}.

\subsection{Construction of the surfaces}\label{ss:data}
Let $k$ be a field with $\ch(k)=p$ and let $K=k(t)$.  Suppose $\CC$
and $\DD$ are smooth projective irreducible curves over $k$.  Suppose
$f: \CC \to \P^1$ and $g: \DD \to \P^1$ are non-constant separable
rational functions.
Write the polar divisors of $f$ and $g$ as:
$$\dvsr_\infty(f)=\sum_{i=1}^{m} a_i P_i\quad\text{and}\quad 
\dvsr_\infty(g)=\sum_{j=1}^{n} b_j Q_j$$ 
where the $P_i$ and the $Q_j$ are distinct $\kbar$-valued points of
$\CC$ and $\DD$.  Let
$$M=\sum_{i=1}^{m} a_i\quad\text{and}\quad 
N=\sum_{j=1}^{n} b_j.$$
We make the following {\it standing assumption\/}: 
\begin{equation}\label{eq:hyp}
p \nmid a_i\text{ for }1 \leq i \leq m\quad\text{and}\quad
p \nmid b_j\text{ for }1 \leq j \leq n. 
\end{equation}

We use the notation $\P^1_{k, t}$ to denote the projective line over
$k$ with a chosen parameter $t$. Define a rational map $\psi_1: \CC
\times_k \DD \ratto \P^1_{k,t}$ by the formula $t=f(x)-g(y)$ or more
precisely
$$\psi_1(x, y)= \begin{cases} 
[f(x)-g(y):1] & \text{if $x \not \in \{P_i\}$ and $y \not \in \{Q_j\}$}\\
[1:0] & \text{if $x \in \{P_i\}$ and $y\not\in\{Q_j\}$}\\
[1:0] & \text{if $x \not\in \{P_i\}$ and $y\in\{Q_j\}$}. 
\end{cases}$$ 
The map $\psi_1$ is undefined at each of the points in the set
$${\mathbb B}=\{(P_i, Q_j) \mid 1 \leq i \leq m, 1 \leq j \leq n\}.$$ 
Let $U=\CC \times_k \DD - {\mathbb B}$ and note that the restriction
$\psi_1|_U:U \to \P^1_{k,t}$ is a morphism.  We call the points in
${\mathbb B}$ ``base points'' because they are the base points of the
pencil of divisors on $\CC \times_k \DD$ defined by $\psi_1$.  Namely,
for each closed point $v \in \P^1_{k,t}$, let
$\overline{\psi_1^{-1}(v)}$ denote the Zariski closure in $\CC
\times_k \DD$ of $(\psi_1|_U)^{-1}(v)$.  The points in $\mathbb B$ lie
in each member of this family of divisors.

We note that the fiber of $\psi_1$ over $v=\infty$ is a
union of horizontal and vertical divisors: 
$$\displaystyle
\overline{\psi_1^{-1}(\infty)}=\left(\cup_{i=1}^m \{a_iP_i\} \times
  \DD\right) \cup \left(\cup_{j=1}^n \CC \times \{b_jQ_j\}\right).$$
In particular, the complement of this divisor in $\CC\times\DD$ is
again a product of (open) curves.  This is the underlying geometric
reason why the open sets considered in Proposition~\ref{prop:DPC}
below are dominated by products of curves, and ultimately why we are
able to deduce the Tate and BSD conjectures in
Theorem~\ref{thm:Berger} below.

Suppose $\phi_1:\XX \to \CC \times_k \DD$ is a blow-up such that the
composition $\pi_1=\psi_1 \circ \phi_1: \XX \to \CC \times_k \DD
\ratto \P^1_{k,t}$ is a generically smooth morphism.  The statement of
Theorem~\ref{thm:Berger} below is independent of the choice of
$\phi_1$.  In Proposition~\ref{prop:blowupgenus}, we will construct a
specific blow-up $\phi_1$ in order to compute the genus of $X$ in
terms of the orders of the poles of $f$ and $g$.  We will use this
construction later in Section~\ref{s:rank} to find a formula for the
rank of the Mordell-Weil group of the Jacobian of $X$.

Let $X \to \spec(K)$ be the generic fiber of $\pi_1$ so that $X$
is a smooth curve over $K=k(t)$.  In Theorem~\ref{thm:Berger}, we show
that $\XX$ is dominated by a product of curves and $X$ is
irreducible over $\overline{k}K \simeq \overline{k}(t)$, thus proving
the Tate conjecture for $\XX$ and the BSD conjecture for the
Jacobian of $X$ when $k$ is a finite field.

More generally, we prove analogous results for the entire system of
rational Artin-Schreier extensions of $k[t]$.  Let $q$ be a power of $p$ and set
$\wp_q(u)=u^q-u$.  We write $\YY_q=\P^1_{k,u}$ and we
define a covering $\YY_q\to\P^1_{k,t}$ by setting $t=\wp_q(u)$.  We write
$K_q$ for  the function field of $\YY_q$, so that $K_q\cong k(u)$ and
$K_q/k(t)$ is an extension of degree $q$.  When the ground field $k$
contains $\Fq$, then $K_q/k(t)$ is an $\Fq$-Galois extension.

Consider the base change:
$$\begin{array}{cccc}
\SS_q:=& \YY_q \times_{\P^1_{k,t}} \XX & \rightarrow & \XX\\
& \downarrow& & \downarrow\\
& \YY_q & \longrightarrow & \P^1_{k,t}.
\end{array}$$
Because both $\YY_q$ and $\XX$ have critical points over $\infty$,
the fiber product $\SS_q$ will usually not be smooth over $k$, or even
normal.  Let $\phi_q: \XX_q \to \SS_q$ be a blow-up of the
normalization of $\SS_q$ such that $\XX_q$ is smooth over $k$. The
statement of Theorem~\ref{thm:Berger} is independent of the choice of
$\phi_q$. Let $\pi_q:\XX_q \to \YY_q$ be the composition and let $X_q \to
\spec(K_q)$ be its generic fiber. Note that $X_q \cong X
\times_{\spec{K}} \spec(K_q)$.


\begin{thm} \label{thm:Berger} 
  Given data $k$, $\CC$, $\DD$, $f$, $g$, and $q$ as above, consider
  the fibered surface $\pi_q:\XX_q \to \YY_q$ and the curve $X_q/K_q$
  constructed as above.  Then
\begin{enumerate}
\item $\XX_q$ is dominated by a product of curves;
\item $X_q$ is irreducible and remains irreducible over 
$\overline{k}K_q \cong \overline{k}(u)$;
\item If $k$ is finite, the Tate conjecture holds for $\XX_q$ and the
  BSD conjecture holds for the Jacobian of $X_q$.
\end{enumerate}
These results also hold for $\XX$ and $X$.
\end{thm}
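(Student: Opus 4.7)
The plan is to realize $\XX_q$ as dominated by an explicit product of two Artin--Schreier covers of $\CC$ and $\DD$, extending the observation that $\XX$ itself is a blow-up of $\CC\times_k\DD$.  First, I would introduce auxiliary curves
$$\CC_q:\ z^q-z=f(x)\qquad\text{and}\qquad \DD_q:\ w^q-w=g(y),$$
with their natural projections to $\CC$ and $\DD$.  The crucial identity is
$$(z-w)^q-(z-w)=(z^q-z)-(w^q-w)=f(x)-g(y),$$
so setting $u:=z-w$, a point $(P,Q)\in\CC_q\times\DD_q$ yields a point $(x(P),y(Q),u)$ satisfying both $\wp_q(u)=t$ and $t=f(x)-g(y)$; this is precisely the defining relation of $\XX_q$ away from the exceptional locus.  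Thus one obtains a rational map
$$\Psi_q:\CC_q\times_k\DD_q\ratto \XX_q,\qquad (P,Q)\mapsto (x(P),y(Q),z(P)-w(Q)),$$
compatible with the projections to $\YY_q$.

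For (1), I would check that $\Psi_q$ is dominant on the dense open subset where $x\notin\{P_i\}$, $y\notin\{Q_j\}$ and $u$ is finite: on such an open set the formula defines a morphism onto the corresponding open subset of $\XX_q$, with generic fibers equal to the orbits of the diagonal $\F_q$-action $(z,w)\mapsto(z+c,w+c)$.  The analogous assertion for $\XX$ is immediate since $\XX$ is a blow-up of $\CC\times_k\DD$.

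For (2), I would observe that $f(x)-g(y)-t$ is linear in $t$, hence irreducible in $\kbar[x,y,t]$, and therefore irreducible in $\kbar(t)[x,y]$.  Thus the generic fiber $X$ of $\pi_1$ is geometrically irreducible over $K=k(t)$.  Geometric irreducibility implies the function field of $X$ and any extension of $K$ are linearly disjoint over $K$, so $X_q=X\times_K K_q$ remains irreducible over $\kbar K_q$, yielding (2).

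For (3), once $\XX_q$ is dominated by a product of curves, the Tate conjecture for $\XX_q$ over the finite field $k$ follows from the K\"unneth decomposition of $H^2(\CC_q\times\DD_q)$ together with Tate's theorem on homomorphisms of abelian varieties over finite fields.  The BSD conjecture for the Jacobian of $X_q$ over $K_q$ would then follow from Tate for $\XX_q$ via the standard Tate--Shioda dictionary between Brauer groups of fibered surfaces and Tate--Shafarevich groups of Jacobians of generic fibers.  The main technical obstacle will be the careful bookkeeping required to confirm that $\Psi_q$ is dominant onto the smooth model $\XX_q$ rather than merely onto the singular fiber product $\SS_q$; this parallels \cite[\S\S 4-6]{Ulmer13a}, with the wild Artin--Schreier ramification over $\infty$ replacing the tame Kummer ramification.
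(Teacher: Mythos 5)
Your proposal is correct and follows essentially the same route as the paper: the same auxiliary covers $\CC_q$, $\DD_q$, the same map $(x,z,y,w)\mapsto(x,y,z-w)$ exhibiting $\CC_q^\circ\times_k\DD_q^\circ$ as an $\F_q$-torsor over an open part of $\XX_q$, and the same reduction of (3) to Tate's theorem on endomorphisms of abelian varieties over finite fields. The only cosmetic difference is in (2), where you argue irreducibility directly from linearity of $f(x)-g(y)-t$ in $t$ while the paper deduces it from geometric irreducibility of $\CC_q\times_k\DD_q$ (which is where the standing hypothesis that the pole orders are prime to $p$ enters); and the ``technical obstacle'' you flag is not one, since a dominant rational map to any variety birational to $\XX_q$ suffices.
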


The Tate conjecture mentioned in part (3) of Theorem~\ref{thm:Berger}
refers to Tate's second conjecture, $\rk \NS(\XX_q)= - \ord_{s=1}
\zeta(\XX, s)$, stated in \cite{Tate65}.  The BSD conjecture mentioned
in part (3) of Theorem~\ref{thm:Berger} and in Corollary~\ref{cor:BSD}
refers both to the basic BSD conjecture, $\rk(J_{X_q}(K_q))=
\ord_{s=1} L(J_{X_q}/K_q,s)$ and the refined BSD conjecture relating
the leading coefficient of the $L$-function to other arithmetic
invariants, see \cite{Tate66b}.  See also \cite[6.1.1, 6.2.3, and
6.2.5]{Ulmer14b} for further discussion of these conjectures.

We now introduce some notation useful for proving
Theorem~\ref{thm:Berger}.  Let $\CC_q$ be the smooth projective
irreducible curve covering $\CC$ defined by $\wp_q(z)=f$ and let
$\DD_q$ be the smooth, projective irreducible curve covering $\DD$
defined by $\wp_q(w)=g$.  The morphisms $\CC_q\to\CC$ and
$\DD_q\to\DD$ are geometric $\F_q$-Galois covers, i.e., after
extending the ground field to $\overline{k}$, these covers are Galois
and there is a canonical identification of the Galois group with
$\F_q$.

Let $\CC^\circ\subset\CC$ and $\CC_q^\circ \subset \CC_q$ be the
complements of the points above the poles of $f$.  Similarly, let
$\DD^\circ\subset\DD$ and $\DD_q^\circ \subset \DD_q$ be the
complements of the points above the poles of $g$.  Then $\CC_q^\circ
\to \CC^\circ$ and $\DD_q^\circ \to \DD^\circ$ are \'etale geometric
$\F_q$-Galois covers.  Let  $\XX^o=\CC^o\times\DD^o$,
and let $\XX_q^\circ \subset \XX_q$ be the
complement of $\pi_q^{-1}(\infty_{\YY_q})$.

\begin{prop} \label{prop:DPC} 
For each power $q$ of $p$,
  there is a canonical isomorphism
$$\XX_q^\circ  \cong (\CC_q^\circ \times_k \DD_q^\circ)/\F_q$$
where $\F_q$ acts diagonally.
\end{prop}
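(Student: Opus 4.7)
The plan is to construct an explicit $\F_q$-invariant morphism $\Phi\colon \CC_q^\circ \times_k \DD_q^\circ \to \XX_q^\circ$ and show it realizes $\XX_q^\circ$ as the quotient by the diagonal $\F_q$-action.

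First I would identify $\XX_q^\circ$ concretely. The blow-up $\phi_1\colon \XX \to \CC \times_k \DD$ is supported on the base points $\mathbb B$, which lie in $\overline{\psi_1^{-1}(\infty)}$; thus $\XX^\circ = \pi_1^{-1}(\A^1_{k,t}) = \CC^\circ \times_k \DD^\circ$ and $\pi_1|_{\XX^\circ}$ is just $(x,y)\mapsto f(x)-g(y)$. Pulling back along $\A^1_{k,u}\to\A^1_{k,t}$, $u\mapsto u^q-u$, identifies $\pi_q^{-1}(\A^1_{k,u})\subset \SS_q$ with the closed subscheme of $\A^1_{k,u}\times_k\CC^\circ\times_k\DD^\circ$ defined by $u^q-u=f(x)-g(y)$. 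Since $\partial_u(u^q-u-f+g)=-1$, this subscheme is smooth over $k$ and \'etale over $\CC^\circ\times_k\DD^\circ$, so no normalization or blow-up is required here: it coincides with $\XX_q^\circ$.

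Next, I would define
$$\Phi\colon \CC_q^\circ\times_k\DD_q^\circ \to \XX_q^\circ,\qquad (x,z,y,w)\mapsto (x,y,\,z-w).$$
This is well defined because $(z-w)^q-(z-w)=(z^q-z)-(w^q-w)=f(x)-g(y)$, and it is clearly invariant under the diagonal action $\alpha\cdot(x,z,y,w)=(x,z+\alpha,y,w+\alpha)$.

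Finally I would verify that $\Phi$ is an $\F_q$-torsor by a degree count over the common base $\CC^\circ\times_k\DD^\circ$. The projection $\CC_q^\circ\times_k\DD_q^\circ\to\CC^\circ\times_k\DD^\circ$ is a geometric $(\F_q\times\F_q)$-Galois \'etale cover, while $\XX_q^\circ\to\CC^\circ\times_k\DD^\circ$ is a geometric $\F_q$-Galois \'etale Artin-Schreier cover; since $\Phi$ is a morphism between these two schemes over the common base, it is itself finite \'etale of degree $q$. The diagonal $\F_q$ acts freely (as a subgroup of the free $\F_q\times\F_q$-action on the source) and preserves $\Phi$, so $\Phi$ presents $\CC_q^\circ\times_k\DD_q^\circ$ as an $\F_q$-torsor over $\XX_q^\circ$, giving the canonical isomorphism. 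The only real subtlety is the interpretation of $\F_q$ as a group scheme over $k$ when $\F_q\not\subset k$; this is handled by working with the \'etale $k$-group scheme $\ker(\wp_q\colon\G_a\to\G_a)$, or equivalently by Galois descent from $\kbar$, since both the morphism $\Phi$ and the diagonal action are defined over $k$.
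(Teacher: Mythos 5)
Your proof is correct and follows essentially the same route as the paper's: identify $\XX_q^\circ$ with the affine hypersurface $u^q-u=f(x)-g(y)$ in $\CC^\circ\times_k\DD^\circ\times_k\A^1_{k,u}$ and check that $(x,z,y,w)\mapsto(x,y,z-w)$ presents $\CC_q^\circ\times_k\DD_q^\circ$ as an $\F_q$-torsor over it. You simply supply more of the verification (smoothness of the fiber product over the affine locus, the \'etale degree count, and the group-scheme interpretation of $\F_q$ when $\F_q\not\subset k$) than the paper, which asserts the torsor property directly.
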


\begin{proof}
  By definition, $\XX^\circ$ is the open subset of $\CC \times
  \DD$ where $f(x)$ and $g(y)$ are regular.  Also, $\XX_q^\circ$ is
  the closed subset of
$$\XX^\circ \times_k \YY_q 
= \CC^\circ \times_k \DD^\circ \times_k \YY_q$$ 
with coordinates $(x,y,u)$ where $f(x)-g(y)=\wp_q(u)$. On the other
hand, $\CC_q^\circ \times_k \DD_q^\circ$ is isomorphic to the closed
subset of
$$\left(\CC^\circ \times_k \YY_q\right) \times_k 
\left(\DD^\circ \times_k \YY_q\right)
=\CC_q^\circ\times_k\DD_q^\circ$$
with coordinates $(x,y,z,w)$ where $f(x)=\wp_q(z)$ and $g(y)=\wp_q(w)$. 

Letting $u=z-w$, the morphism $(x,z,y,w) \mapsto (x,y, z-w)$ presents
$\CC_q^\circ \times_k \DD_q^\circ$ as an $\F_q$-torsor over
$\XX_q^\circ$. 
\end{proof}

\begin{proof}[Proof of Theorem~\ref{thm:Berger}]
  By Proposition~\ref{prop:DPC}, there is a rational dominant map
  $\CC_q \times \DD_q \ratto \XX_q$ given by:
$$(x,z,y,w) \mapsto (x,y,z-w).$$ 
This proves that $\XX_q$ is dominated by a product of curves. Also,
$\XX_q$ is geometrically irreducible since $\CC_q$ and $\DD_q$ are
geometrically irreducible.  This proves that $X_q$ remains irreducible
over $\overline{k}(u)$.  Part (3) is a consequence of part (1) and
Tate's theorem on endomorphisms of abelian varieties over finite
fields.  See, for example, \cite[8.2.2, 6.1.2, and 6.3.1]{Ulmer14b}.
The claims for $\XX$ and $X$ follow similarly from the fact that
$\XX$ is birational to $\CC\times_k\DD$.
\end{proof}

Using \cite[8.2.1 and 6.3.1]{Ulmer14b}, we see that if $X$ is a curve
over a function field $F$ and the BSD conjecture holds for $X$ over a
finite extension $K$, then it also holds over any subextension
$F\subset K'\subset K$.  The following is thus immediate from Theorem
\ref{thm:Berger} and Lemma~\ref{Ladditive}.

\begin{cor}\label{cor:BSD} 
  Let $X$ be a smooth projective irreducible curve over $K=k(u)$ and
  assume that there are rational functions $f(x) \in k(x)$ and $g(y)
  \in k(y)$ and a separable additive polynomial $A(u) \in k[u]$ such
  that $X$ is birational to the curve
$$\left\{f(x)-g(y)-A(u)=0\right\} \subset\P^1_K \times_K\P^1_K.$$ 
Then the BSD conjecture holds for the Jacobian of $X$.
\end{cor}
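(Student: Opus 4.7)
The plan is to reduce the statement to Theorem~\ref{thm:Berger} by exhibiting $K=k(u)$ as an intermediate subfield of a suitable Artin-Schreier tower over the smaller function field $F_0=k(t)$, and then to invoke the fact that BSD descends along subextensions.

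First I would introduce the auxiliary curve $X_0/F_0$, namely the smooth projective model of $\{f(x)-g(y)-t=0\}$ over $F_0$. The hypothesized birational isomorphism realizes $X$ as the base change $X_0\times_{F_0}K$ once we set $t=A(u)$, where $K/F_0$ is the finite separable extension of degree $\deg A$ obtained by adjoining $u$; hence $J_X\cong J_{X_0}\times_{F_0}K$. Applying Theorem~\ref{thm:Berger} to $X_0/F_0$, with $\CC$ and $\DD$ taken to be the smooth projective curves on which $f$ and $g$ are respectively defined, yields that the BSD conjecture holds for $J_{X_0}$ over the Artin-Schreier extension $K_q:=F_0[v]/(\wp_q(v)-t)$ for every power $q$ of $p$.

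The key step is to choose $q$ so that $K$ embeds into $K_q$ over $F_0$. After rescaling $t$ by a constant I may assume $A$ is monic. Since $A$ is separable, I can choose $q$ large enough that all roots of $A$ lie in $\F_q$; Lemma~\ref{Ladditive} then provides a monic separable additive polynomial $B\in k[z]$ such that $A\circ B=\wp_q$. The element $B(v)\in K_q$ satisfies $A(B(v))=\wp_q(v)=t$, so the assignment $u\mapsto B(v)$ defines an $F_0$-embedding $K=k(u)\hookrightarrow K_q$, placing $K$ as an intermediate extension $F_0\subset K\subset K_q$.

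Finally I would apply the fact noted immediately before the corollary, namely (by \cite[8.2.1 and 6.3.1]{Ulmer14b}) that BSD for the base change of $J_{X_0}$ to $K_q$ implies BSD for the base change to any intermediate extension of $F_0$; applied to $F_0\subset K\subset K_q$, this gives the desired conclusion for $J_X/K$. I expect the only real obstacle to be the construction of the embedding $K\hookrightarrow K_q$ over $F_0$, which is exactly what Lemma~\ref{Ladditive} makes possible; everything else is formal.
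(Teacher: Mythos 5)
Your proposal is correct and follows essentially the same route as the paper, which derives the corollary as an immediate consequence of Theorem~\ref{thm:Berger}, Lemma~\ref{Ladditive} (providing the embedding $K=K_{A,t}\hookrightarrow K_{\wp_q,t}$ via $A\circ B=\wp_q$), and the descent of BSD to subextensions cited from \cite[8.2.1 and 6.3.1]{Ulmer14b}. Your write-up merely makes explicit the steps the paper compresses into ``immediate.''
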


We note that an argument similar to \cite[Rem.~12.2]{Ulmer14a}
shows that the hypothesis that $A$ is separable is not needed.

To determine the genus of $X_q$ and for later use, we now proceed to
construct a specific blow-up $\phi_1:\XX\to\CC\times_k\DD$ which
resolves the indeterminacy of the rational map $\psi_1:\CC\times_k\DD
\ratto\P^1_{k,t}$ and yields a morphism $\pi_1:\XX\to\P^1_{k,t}$.

\begin{prop} \label{prop:blowupgenus}
The genus of the smooth proper irreducible curve $X_q$ over $K_q$ is
$$g_{X_q}=Mg_{\DD} + Ng_{\CC} +(M-1)(N-1) - \sum_{i,j} \delta(a_i, b_j)$$
where $\delta(a,b):=(ab-a-b+\gcd(a,b))/2$.
\end{prop}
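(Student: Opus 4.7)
The plan is to reduce from $g_{X_q}$ to $g_X$, then compute $g_X$ by adjunction on $\CC\times_k\DD$ followed by a $\delta$-invariant correction at the singularities of $X$ in the base locus. For the first step, $X_q = X\times_K K_q$ arises by a finite separable extension of function fields, which preserves the geometric genus, so $g_{X_q} = g_X$.

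For the adjunction step, observe that the rational function $f(x)-g(y)-t$ on $\CC\times_k\DD$ has polar divisor $\sum_i a_i(P_i\times\DD)+\sum_j b_j(\CC\times Q_j)$, so the closure $\overline{\psi_1^{-1}(t)}\subset\CC\times_k\DD$ of the generic fiber of $\psi_1$ has class $M[\{\mathrm{pt}\}\times\DD]+N[\CC\times\{\mathrm{pt}\}]$ in $\NS(\CC\times_k\DD)$. Using $K_{\CC\times_k\DD}=(2g_\CC-2)[\{\mathrm{pt}\}\times\DD]+(2g_\DD-2)[\CC\times\{\mathrm{pt}\}]$ and the standard intersection pairing (in which $[\{\mathrm{pt}\}\times\DD]^2=[\CC\times\{\mathrm{pt}\}]^2=0$ and the mixed pairing equals $1$), adjunction yields
\begin{equation*}
p_a(X)=1+\tfrac{1}{2}X\cdot(X+K_{\CC\times_k\DD})=Mg_\DD+Ng_\CC+(M-1)(N-1).
\end{equation*}

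For the singularity analysis, for generic $t$ the curve $X$ is smooth away from the base locus $\mathbb{B}=\{(P_i,Q_j)\}$. At $(P_i,Q_j)$ choose local parameters $u,v$ so that $f=u^{-a_i}(\text{unit})$ and $g=v^{-b_j}(\text{unit})$. Multiplying $f-g-t$ by $u^{a_i}v^{b_j}$, the standing hypothesis $p\nmid a_i,b_j$ allows the units to be absorbed by extracting $a_i$-th and $b_j$-th roots in the completed local ring (Hensel's lemma applied to $U(u)^{1/a_i}$ with $U(0)=1$, and similarly for $v$), bringing the local equation of $X$ into the analytic normal form $u^{a_i}-v^{b_j}=0$. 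This quasi-homogeneous singularity has $\gcd(a_i,b_j)$ branches through the origin, and a standard computation (parametrize each branch of $u^{a'}-\zeta v^{b'}=0$ with $a'=a_i/d$, $b'=b_j/d$, $d=\gcd(a_i,b_j)$, then apply the formula $2\delta = \sum_\nu(a'-1)(b'-1) + 2\sum_{\nu<\nu'}(B_\nu,B_{\nu'})$, or equivalently use the conductor of the normalization) gives $\delta$-invariant $\delta(a_i,b_j)=(a_ib_j-a_i-b_j+\gcd(a_i,b_j))/2$.

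Taking $\phi_1:\XX\to\CC\times_k\DD$ to be any resolution of these plane curve singularities that simultaneously resolves the indeterminacy of $\psi_1$ (for instance the Hirzebruch–Jung resolution applied to each $u^{a_i}=v^{b_j}$), the strict transform of $X$ is the smooth projective model of the generic fiber of $\pi_1$, so
\begin{equation*}
g_X=p_a(X)-\sum_{(P_i,Q_j)\in\mathbb{B}}\delta(a_i,b_j)
\end{equation*}
yields the claimed formula. The main technical point is securing the analytic normal form at each base point: writing the unit factors as exact $a_i$-th and $b_j$-th powers in the completed local ring is where the hypothesis $p\nmid a_ib_j$ enters essentially. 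The classical $\delta$-invariant computation for $u^a-v^b$ and the invariance of the geometric genus under the separable base change $K\subset K_q$ are both routine by comparison.
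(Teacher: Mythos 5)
Your proof is correct, but it computes the correction term by a genuinely different route than the paper. Both arguments obtain the leading term $Mg_{\DD}+Ng_{\CC}+(M-1)(N-1)$ by adjunction applied to the class $M[\{\mathrm{pt}\}\times\DD]+N[\CC\times\{\mathrm{pt}\}]$ of a member of the pencil. For the drop in genus at the base points, the paper runs an explicit Euclidean-algorithm chain of blow-ups at each $(P_i,Q_j)$, records the multiplicity of the proper transform of a general member at each infinitely near point ($\min(a_\ell,b_\ell)$ during the first stage, $\gcd(a_i,b_j)$ at the second), and sums the drops $e(e-1)/2$; this is Noether's formula for $\delta(a_i,b_j)$ in disguise. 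You instead put the singularity into the analytic normal form $u^{a_i}=v^{b_j}$ and quote the branch-by-branch computation of its $\delta$-invariant, which makes the appearance of $\gcd(a_i,b_j)$ (the number of branches) transparent and isolates exactly where $p\nmid a_ib_j$ is used. Two details you should make explicit: first, after multiplying by $u^{a_i}v^{b_j}$ the local equation is $U(u)v^{b_j}-V(v)u^{a_i}-tu^{a_i}v^{b_j}$, and the term $tu^{a_i}v^{b_j}$ must be absorbed into the unit multiplying $u^{a_i}$ (or discarded as lying strictly above the Newton polygon) before extracting the single root of a unit that normalizes the equation; second, identifying $g_X$ with $p_a-\sum\delta$ of a general closed member uses that $\pi_1$ is generically smooth and that arithmetic genus is constant in the flat family $\pi_1$ --- or, more cleanly, one can run the same local analysis on the closure of the generic fiber over $\overline{k(t)}$, where the normal form holds verbatim with $t$ transcendental. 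The trade-off is that your argument does not construct the explicit model $\XX$, which the paper reuses in Remark~\ref{rem:components} and in the proof of Theorem~\ref{thm:ranks} to identify which exceptional components are sections of $\pi_1$ and which lie over $t=\infty$; as a proof of the genus formula alone, yours is shorter.
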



\begin{proof}
The proof of Proposition~\ref{prop:blowupgenus} is very similar to the
proof of \cite[Thm~3.1]{Berger08}; see also \cite[\S
4.4]{Ulmer13a}. It uses facts about the arithmetic genus of curves of
bidegree $(M,N)$ in $\CC\times_k\DD$, the adjunction formula, and
resolution of singularities.

  The procedure to resolve the singularity at each base point
  $(P_i,Q_j)$ is the same so we fix one such point and drop $i$ and
  $j$ from the notation.  Thus assume that $(P,Q)$ is a base point,
  that $f$ has a pole of order $a$ at $P$, and that $g$ has a pole of
  order $b$ at $Q$.  Choose uniformizers $x$ and $y$ at $P$ and $Q$
  respectively, so that $f=ux^{-a}$ and $g=vy^{-b}$ where $u$ and $v$
  are units in the local rings at $P$ and $Q$ respectively.  The map
  $\psi_1$ is thus given in neighborhood of $(P,Q)$ in projective
  coordinates by $[uy^b-vx^a:x^ay^b]$.

  The resolution of the indeterminacy at $(P,Q)$ takes place in three
  stages. The first stage, which we discuss now, occurs only when
  $a\neq b$.  Suppose that is the case and blow up the point $(P,Q)$
  on $\CC\times_k\DD$.  Then there is a unique point of indeterminacy
  upstairs.  If $a<b$, we introduce new coordinates $x=x_1y_1$ and
  $y=y_1$ in which the blow up composed with $\psi_1$ becomes
  $[uy_1^{b_1}-vx_1^{a_1}:x_1^{\alpha_1}y_1^{\beta_1}]$ where $a_1=a$,
  $b_1=b-a$, $\alpha_1=a$ and $\beta_1=b$.  The unique point of
  indeterminacy is at $x_1=y_1=0$.  If $a>b$, we introduce new
  coordinates $x=x_1$ and $y=x_1y_1$ in which the blow up composed
  with $\psi_1$ becomes
  $[uy_1^{b_1}-vx_1^{a_1}:x_1^{\alpha_1}y_1^{\beta_1}]$ where
  $a_1=a-b$, $b_1=b$, $\alpha_1=a$ and $\beta_1=b$.  The unique point
  of indeterminacy is at $x_1=y_1=0$.  In both cases, note that
  $\alpha_1\ge a_1$ and $\beta_1\ge b_1$.

  We now proceed inductively within this case.  Suppose that at step
  $\ell$ our map is given locally by
  $[uy_\ell^{b_\ell}-vx_\ell^{a_\ell}:x_\ell^{\alpha_\ell}y_\ell^{\beta_\ell}]$
  and $a_\ell\neq b_\ell$.  The point $x_\ell=y_\ell=0$ is the point
  of indeterminacy.  If $a_\ell<b_\ell$, we set
  $x_\ell=x_{\ell+1}y_{\ell+1}$ and $y_\ell=y_{\ell+1}$ so that our
  map becomes $[uy_{\ell+1}^{b_{\ell+1}}-vx_{\ell+1}^{a_{\ell+1}}:
  x_{\ell+1}^{\alpha_{\ell+1}}y_{\ell+1}^{\beta_{\ell+1}}]$ where
  $a_{\ell+1}=a_\ell$, $b_{\ell+1}=b_\ell-a_\ell$,
  $\alpha_{\ell+1}=\alpha_\ell$ and
  $\beta_{\ell+1}=\beta_\ell+\alpha_\ell-a_\ell$.  On the other hand,
  if $a_\ell>b_\ell$, we set $x_\ell=x_{\ell+1}$ and
  $y_\ell=x_{\ell+1}y_{\ell+1}$ so that our map becomes
  $[uy_{\ell+1}^{b_{\ell+1}}-vx_{\ell+1}^{a_{\ell+1}}:
  x_{\ell+1}^{\alpha_{\ell+1}}y_{\ell+1}^{\beta_{\ell+1}}]$ where
  $a_{\ell+1}=a_\ell-b_\ell$, $b_{\ell+1}=b_\ell$,
  $\alpha_{\ell+1}=\alpha_\ell+\beta_\ell-b_\ell$ and
  $\beta_{\ell+1}=\beta_\ell$.  (We use here that $\alpha_\ell\ge
  a_\ell$ and $\beta_\ell\ge b_\ell$ and we note that these
  inequalities continue to hold at step $\ell+1$.)

  Let $\gamma(a,b)$ be the number of steps to proceed from $(a,b)$ to
  $(\gcd(a,b),0)$ by subtracting the smaller of $a$ or $b$ from the
  larger at each step (cf.~\cite[fourth paragraph of
  \S4.4]{Ulmer13a}).  Then after $j=\gamma(a,b)-1$ steps as in the
  preceding paragraph, our map is given by
  $[uy_j^{b_j}-vx_j^{a_j}:x_j^{\alpha_j}y_j^{\beta_j}]$ where
  $a_j=b_j=\gcd(a,b)$.  To lighten notation, let us write $c$ for
  $\gcd(a,b)$, $\alpha$ for $\alpha_j$, $\beta$ for $\beta_j$, $x$ for
  $x_j$, and $y$ for $y_j$, so that our map is
  $[uy^c-vx^c:x^{\alpha}y^{\beta}]$ and the unique point of
  indeterminacy in these coordinates is $x=y=0$.  Note that $\alpha,
  \beta \geq c$.  This completes the first stage of the resolution of
  indeterminacy.

  The second stage consists of a single blow up at $x=y=0$.
  Introducing coordinates $x=rs$, $y=s$, our map becomes
  $[u-vr^c:r^\alpha s^{\beta+\alpha-c}]$ and there are now $c$ points
  of indeterminacy, namely the $c$ solutions of $r^c=u/v$, $s=0$.
  (Note that $u(x)=u(rs)$ and $v(y)=v(s)$ are both constant along the
  exceptional divisor $s=0$, so the equation $r^c=u/v$ has exactly $c$
  solutions on that divisor.)  Let $\delta=\beta+\alpha-c$.

  The third stage consists of dealing with each of the $c$ points of
  indeterminacy in parallel.  Focus on one of them: Replace $r$ with
  $r-\omega$ where $\omega$ is one of the zeroes of $r^c-u/v$ so that
  our map becomes $[wr:zs^{\delta}]$, the point of interest is
  $r=s=0$, and $w$ and $z$ are units in the local ring at that point.
  We now blow up $\delta$ times: Setting $r=r_1s_1$, $s=s_1$, our map
  becomes $[wr_1:zs_1^{\delta-1}]$; setting $r_1=r_2s_2$ and $s_1=s_2$
  our map is $[wr_2:zs_2^{\delta-2}]$; ...; and after $\delta$ steps
  our map is $[wr_\delta;z]$ which is everywhere defined.

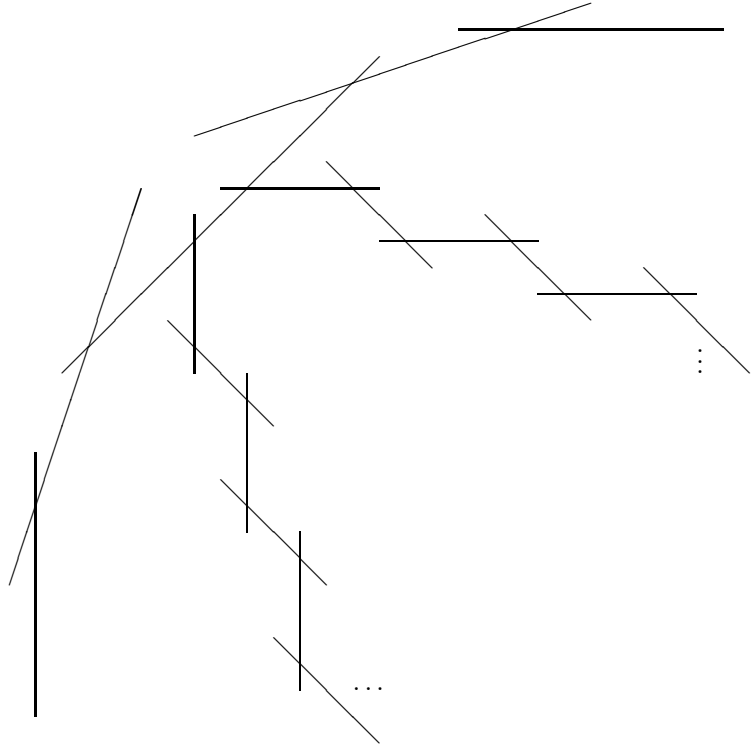
\begin{figure}
\begin{picture}(300,300)
\thicklines
\put(30,10){\line(0,1){100}}\put(190,270){\line(1,0){100}}
\thinlines
\put(20,60){\line(1,3){50}}\put(40,140){\line(1,1){120}}\put(90,230){\line(3,1){150}}
\put(90,140){\line(0,1){60}}\put(110,80){\line(0,1){60}}\put(130,20){\line(0,1){60}}
\put(100,210){\line(1,0){60}}\put(160,190){\line(1,0){60}}\put(220,170){\line(1,0){60}}
\put(80,160){\line(1,-1){40}}\put(100,100){\line(1,-1){40}}\put(120,40){\line(1,-1){40}}
\put(140,220){\line(1,-1){40}}\put(200,200){\line(1,-1){40}}\put(260,180){\line(1,-1){40}}
\put(150,20){$\dots$}\put(280,140){$\vdots$}
\end{picture}
\caption{Resolution for $a=4$, $b=6$}
\end{figure}

Figure 1 above, illustrating the case $a=4$, $b=6$, may help to digest
the various steps.  The vertical line in the lower left is the proper
transform of $\CC\times\{Q\}$, and the horizontal line in the upper
right is the proper transform of $\{P\}\times\DD$.  The two lines
adjacent to them are the components introduced in the first stage of
the resolution, where $(a,b)=(4,6)$ becomes $(2,2)$ in 2 steps (so
$\gamma=3$).  The line of slope 1 is the component introduced in step
2.  The chains leading away from this last line are the components
introduced in the third step, where $\delta=12$ (but we have only
drawn half of each chain, indicating the rest with $\dots$).

Now we go back and consider a general element of the pencil defined by
$\psi$ and its proper transform at each stage. For all but finitely
many values of $t$, the element of the pencil parameterized by $t$ is
smooth away from the base points.  In a neighborhood of a base point
$(P,Q)$ where $f$ and $g$ have poles of order $a$ and $b$
respectively, $\FF=\overline{\psi_1^{-1}(t)}$ is given by
$uy^b-vx^a-tx^ay^b$.  The tangent cone of $\FF$ at $(0,0)$ is a single
line $x=0$ or $y=0$ and so there is a unique point over $(P,Q)$ on the
proper transform of $\FF$.  The situation is similar for each of the
first $\gamma(a,b)-1$ blow ups, and after the last of them, the proper
transform of $\FF$ is given locally by $uy^c-vx^c-tx^\alpha y^\beta$
in the notation at the end of the first stage above.

Now at the second stage the tangent cone consists of $c$ lines and
there are $c$ points over $x=y=0$ in the proper transform.  Locally
the proper transform is given by $wr-zs^\delta$, and this is smooth in
a neighborhood of the exceptional divisor.  Therefore, there are no
further changes in the isomorphism type of the proper transform in the
third stage.  In other words, the fibers of $\pi_1$ are isomorphic to
the elements of the pencil appearing after the second stage.

To compute the genus of the fibers, we note that the multiplicity of
the point of indeterminacy on $\FF$ at the $\ell$-th step of the first
stage is $e_\ell=\min(a_\ell,b_\ell)$ and at the second stage it is
$c=\gcd(a,b)$.  Thus the change in arithmetic genus at step $\ell$ is
$e_\ell (e_\ell-1)/2$ and the change in the last step is $c(c-1)/2$.
Summing these contributions and noting that the arithmetic genus of
the elements of the original pencil is $Mg_{\DD} + Ng_{\CC}
+(M-1)(N-1)$ yields the asserted formula for the genus $g_{X_q}$ of
the generic fiber of $\pi_1$.  (See \cite[\S\S3.7 and 3.8]{Berger08}
for more details on computing the sum.)  This completes the proof.
\end{proof}

It is worth noting that the algorithm presented above for resolving
the indeterminacy of $\psi_1$ sometimes leads to a morphism
$\XX\to\P^1_{k,t}$ which is not relatively minimal.  In general, one
needs to contract several $(-1)$-curves to arrive at a relatively
minimal morphism.

\begin{rem}\label{rem:components}
  For later use we note that the exceptional divisor of
  the last blow up in stage three (at each of $c=\gcd(a,b)$ points)
  maps isomorphically onto the base $\P^1_{k,t}$ whereas all the other
  exceptional divisors introduced in the resolution map to the point
  $\infty=[1,0]\in\P^1_{k,t}$.  In particular, $\pi_1:\XX\to\P^1_{k,t}$ always
  has a section, and $X$ always has a $k(t)$-rational point.
\end{rem}

\section{Examples---lower bounds on ranks}\label{s:exs1}
Our goal in this section is to combine the construction of
Theorem~\ref{thm:Berger} with the analytic ranks bound in
Corollary~\ref{cor:analytic-ranks} to give examples of Jacobians which
satisfy the BSD conjecture and which have large Mordell-Weil rank.
This is an analogue for Artin-Schreier extensions of some results in
\cite{Ulmer07b} for Kummer extensions.

\subsection{Notation}\label{ss:types}
Throughout this section, $k$ is a finite field of cardinality $r$, a
power of $p$.  Given an integer $M$ and a partition
$M=a_1+\cdots+a_m$, we say that a rational function $f$ on $\P^1$ is
{\it of type $(a_1+\cdots+a_m)$\/} if the polar divisor has
multiplicities $a_1,\dots,a_m$, i.e.,
$$\dvsr_\infty(f)=\sum_{i=1}^m a_iP_i$$
where the $P_i$ are distinct $\kbar$-valued points.  We assume
throughout that $p \nmid a_1 \cdots a_m$.  Given two non-constant
rational functions $f$ on $\CC$ and $g$ on $\DD$ over $k$,
Proposition~\ref{prop:blowupgenus} gives a formula for the genus of
the smooth proper curve over $k(t)$ with equation $f-g=t$ in terms of
the types of $f$ and $g$.

\subsection{Elliptic curves}\label{ss:ell-curves}
Suppose now that $\CC=\DD=\P^1$ over $k$ and that $f$ and $g$ are
rational functions on $\P^1$.  Straightforward calculation reveals
that if the types $f$ and $g$ are on the following list, then the
curve $X$ over $k(t)$ given by $f(x)-g(y)=t$ has genus 1:
$$(2,1+1), (1+1,1+1), (2,3), (2,2+1), (2,4), (2,2+2), (3,3).$$
(We omit pairs of types obtained from these by exchanging the two
partitions and assume $p \not = 2,3$ as necessary).  

For example, to illustrate the $(2, 1+1)$ case, let $f(x)$ be a
quadratic polynomial, so that $f$ has type $(2)$.  Let $g_1(y)$ and
$g_2(y)$ be polynomials with $\deg g_1\le 2$ and $\deg g_2=2$ such
that $g_2$ has distinct roots and $g_1$ and $g_2$ are relatively prime
in $k[y]$, so that $g=g_1/g_2$ has type $(1+1)$.  For such a choice of
$f$ and $g$, the curve $f(x)-g(y)=t$ has genus 1.

\subsection{Elliptic curves of high rank}
Recall that $K=k(t)$, $q$ is a power of $p$, and $K_q=k(u)$ with
$u^q-u=t$.  The next result says that for certain types as in the
previous section and ``generic'' $f$ and $g$, the elliptic curve $X$ has
unbounded rank over $K_q$ as $q$ varies.

\begin{prop}\label{prop:ell-ranks1}
  Suppose that $p>2$ and $f$ and $g$ are rational functions on $\P^1$
  over $k$ of type $(2,2+1)$ or of type $(2,4)$.
  Suppose that the finite critical values of $g$ are distinct.  Then
  the curve $X$ defined by $f(x)-g(y)=t$ is elliptic, it satisfies the
  BSD conjecture over $K_q$ for all $q$, and the rank of $X(K_q)$ is
  unbounded as $q$ varies.  More precisely, if $q$ has the form
  $q=r^{2\nu}$ and $k'$ is the field of $r^{2\nu}$ elements, then
$$\rk X(K_q)\ge \frac{r^\nu-1}{2\nu}$$
and
$$\rk X(k'K_q)\ge r^\nu-1.$$
\end{prop}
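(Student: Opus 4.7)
The plan is to combine Proposition~\ref{prop:blowupgenus}, Corollary~\ref{cor:BSD}, and Corollary~\ref{cor:analytic-ranks}, with the main new work being the verification of a parity condition on the conductor. For the genus, Proposition~\ref{prop:blowupgenus} gives, for type $(2,2{+}1)$, $M=2$, $N=3$, $g_\CC=g_\DD=0$, and $\sum_{i,j}\delta(a_i,b_j)=\delta(2,2)+\delta(2,1)=1+0=1$, so $g_X=(2-1)(3-1)-1=1$; for type $(2,4)$, $\delta(2,4)=2$ gives $g_X=1\cdot 3-2=1$. Remark~\ref{rem:components} supplies a $K$-rational point, so $X$ is an elliptic curve. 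Corollary~\ref{cor:BSD} applied with the separable additive polynomial $A(u)=u^q-u$ then yields the BSD conjecture for the Jacobian of $X$ over $K_q$ for every $q$.

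For the rank lower bounds, I would apply Corollary~\ref{cor:analytic-ranks}(2) with $J=X$, $f=t\in F$ (which has a unique pole, of order $1$, at $\infty$), and $K=K_q=K_{\wp_q,t}$. Since $K_q/F$ is ramified only at $\infty$, the set $S$ equals $\{\infty\}$, and the two hypotheses to verify are: \textbf{(i)} $X$ is at worst tamely ramified at $\infty$, and \textbf{(ii)} the degree of the part of $\cond(X/F)$ away from $\infty$ is odd.

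The main obstacle is (ii). By the Jacobian criterion, the finite places of bad reduction of $X/F$ are the values $t_0=f(x_0)-g(y_0)$ with $f'(x_0)=0=g'(y_0)$. Since $f$ has type $(2)$ it has a unique finite critical point $x_0$, and a Riemann--Hurwitz count shows that a rational function on $\P^1$ of type $(2{+}1)$ or $(4)$ has exactly $3$ finite critical points, counted with multiplicity; the hypothesis that the finite critical values of $g$ are distinct forces each such critical point to be simple. Each pair $(x_0,y_j)$ ($j=1,2,3$) then produces one bad $t$-value at which the fiber acquires a single ordinary node, i.e.\ the reduction is of Kodaira type $I_1$ with local conductor exponent $1$. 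Hence the degree of the conductor of $X/F$ away from $\infty$ equals $3$, which is odd. For hypothesis (i), tame ramification at $\infty$ holds automatically when $p>3$ (there is no wild inertia for an elliptic curve in residue characteristic $>3$); in residue characteristic $3$ it can be checked directly from the explicit resolution at the base points $(P_i,Q_j)$ constructed in the proof of Proposition~\ref{prop:blowupgenus}, using that the pole multiplicities are prime to $p$. Combining (i), (ii), and BSD via Corollary~\ref{cor:analytic-ranks}(2) then gives $\rk X(K_q)\ge (r^\nu-1)/(2\nu)$ and $\rk X(k'K_q)\ge r^\nu-1$, as claimed.
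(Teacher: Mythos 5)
Your proposal is correct and follows essentially the same route as the paper's proof: genus $1$ via Proposition~\ref{prop:blowupgenus}, a rational point via Remark~\ref{rem:components}, BSD via the Berger-type result, three simple simultaneous critical points giving three $I_1$ fibers and hence odd finite conductor degree $3$, tameness at $t=\infty$, and then Corollary~\ref{cor:analytic-ranks}. The only cosmetic difference is that the paper pins down the Kodaira type at $\infty$ ($I_3^*$ for $(2,2{+}1)$, $III^*$ for $(2,4)$) to justify tameness for all $p>2$, whereas you split into $p>3$ (automatic) and $p=3$ (deferred to the same explicit resolution), which amounts to the same verification.
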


\begin{proof}
  Proposition~\ref{prop:blowupgenus} shows that $X$ has genus 1, and
  Remark~\ref{rem:components} shows that $X$ has a $k(t)$-rational
  point, so $X$ is elliptic.

  By the Riemann-Hurwitz formula, a rational function of degree $M$
  has $2M-2$ critical points (counting multiplicities).  A pole of
  order $a$ is a critical point of multiplicity $a-1$.  Thus a
  rational function $f$ of type $(2)$ has 1 critical point which is
  not a pole, and therefore 1 finite critical value.  A rational
  function $g$ of type $(2+1)$ has 3 non-polar critical points, and so
  3 finite critical values.  Similarly, a rational function of type
  $(4)$ has 3 non-polar critical points and 3 finite critical values.
  By ``generic'' we mean that the finite critical values of $g$ are
  distinct, and we impose no restriction on $f$.

  Now consider the rational map $\psi_1:\CC\times_k\DD \ratto
  \P^1_{k,t}$ given by $t=f(x)-g(y)$ and the blow up $\phi_1:
  \XX\to\CC\times\DD$ constructed in the proof of
  Proposition~\ref{prop:blowupgenus} that resolves the indeterminacy
  of $\psi_1$, yielding a proper morphism $\pi_1:\XX\to\P^1_{k,t}$
  whose generic fiber is $X$.  Away from the fiber over $t=\infty$,
  the critical points of $\pi_1$ are precisely the simultaneous
  critical points of $f$ and $g$.  Under our hypotheses, these are
  simple critical points, and so the critical points of $\pi_1$ away
  from the fiber at infinity are ordinary double points.  Moreover, by
  the counts in the previous paragraph, there are precisely three such
  ordinary double points.  This shows that $X$ has multiplicative
  reduction over three finite places of the $t$-line, and good
  reduction at all other finite places.  Thus the degree of the finite
  part of the conductor of $X$ is 3.

  Next we claim that $X$ (or rather the representation
  $H^1(X\times\overline{K},\Ql)$ for any $\ell\neq p$) is tamely
  ramified at $t=\infty$.  One way to see this is to use the algorithm
  in the proof of Proposition~\ref{prop:blowupgenus} to compute the
  reduction type of $X$ at $t=\infty$.  One finds that $X$ has Kodaira
  type $I_3^*$ in the $(2,2+1)$ case and Kodaira type $III^*$ in the
  $(2,4)$ case.  In both cases, $X$ is tamely ramified at $t=\infty$
  for any $p>2$.  (Another possibility is to use the method of the
  proof of Proposition~\ref{prop:higher-g-unbounded} below to see that
  $X$ obtains good reduction over an extension of $k((t^{-1}))$ of
  degree 4.)

  Now we may apply Corollary~\ref{cor:analytic-ranks} to conclude that
  we have $\ord_{s=1}L(X/K_q,s)\ge(r^\nu-1)/(2\nu)$ and
  $\ord_{s=1}L(X/k'K_q,s)\ge r^\nu-1$.  Moreover, by
  Theorem~\ref{thm:Berger}, $X$ satisfies the BSD conjecture, so we
  also have a lower bound on the algebraic ranks, i.e., on $\rk
  X(K_q)$ and $\rk X(k'K_q)$.

  This competes the proof of the proposition.
\end{proof}

The curves in Proposition~\ref{prop:ell-ranks1} can of course be made
quite explicit.  Let us consider the case of types $(2,2+1)$.  Since
$f$ and $g$ have unique double poles, these occur at rational points,
and we may assume they are both at infinity.  Thus $f(x)$ is a
quadratic polynomial which, after a change of coordinates on $x$ and
$t$, we may take to be $x^2$, and $g$ has the form
$$g(y)=\frac{ay^3+by^2+cy+d}y$$
for scalars $a,b,c,d$.  A small calculation reveals that $X$ has the
Weierstrass form
$$y^2=x^3+(t+c)x^2+bdx+ad^2.$$
The discriminant of this model is a cubic polynomial in $t$ and the
genericity condition is simply that the discriminant have distinct
roots.  To see that the locus where it is satisfied is not empty, we
may specialize as follows: If $p>3$, take $a=d=1$ and $b=c=0$, so that
$X$ is $y^2=x^3+tx^2+1$.  The discriminant is then $-16(4t^3+27)$
which has distinct roots.  If $p=3$, take $a=b=d=1$ and $c=0$, in
which case the discriminant is $-t^3+t^2-1$, a polynomial with
distinct roots in characteristic 3.

\subsection{Unbounded rank in most
  genera}\label{ss:higher-g-unbounded}
The main idea of the previous section generalizes easily to most
genera.

We define a pair of polynomials $(f,g)$ to be ``generic'' if the set
of differences $f(x_i)-g(y_j)$, where $x_i$ and $y_j$ run through the
non-polar critical points of $f$ and $g$ respectively, has maximum
possible cardinality.  In other words, we require that if
$(i,j)\neq(i',j')$, then $f(x_i)-g(y_j)\neq f(x_{i'})-g(y_{j'})$.
Note that this condition imposes no constraint on a quadratic
polynomial $f$ since it has only one finite critical value.

\begin{prop}\label{prop:higher-g-unbounded}
  Fix an integer $g_X>0$ such that $p$ does not divide $N=2g_X+2$.
  Suppose that $f$ and $g$ are a pair of ``generic'' rational
  functions on $\P^1$ \textup{(}generic in the sense mentioned
  above\textup{)} of type $(2,N)$.  Then the smooth proper curve
  defined by $f(x)-g(y)=t$ has genus $g_X$, its Jacobian $J_X$
  satisfies the BSD conjecture over $K_q$ for all $q$, and the rank of
  $J_X(K_q)$ is unbounded as $q$ varies through powers of $p$.
\end{prop}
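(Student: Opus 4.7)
The plan is to combine three inputs: the genus formula from Proposition~\ref{prop:blowupgenus}, the BSD conjecture via Corollary~\ref{cor:BSD}, and the analytic-rank lower bound of Corollary~\ref{cor:analytic-ranks}. The first two steps are essentially immediate, while the substance lies in verifying the conductor parity and tameness hypotheses needed for the third.

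First, with $\CC = \DD = \P^1$ (so $g_\CC = g_\DD = 0$) and $(M, N) = (2,\, 2g_X + 2)$, Proposition~\ref{prop:blowupgenus} reduces to $g_X = (N-1) - \delta(2, N)$. Since $p \nmid N$ forces $p$ odd and $\gcd(2, N) = 2$, this simplifies to $g_X = N/2 - 1$, consistent with our choice of $N$. BSD over $K_q$ for every power $q$ of $p$ then follows directly from Corollary~\ref{cor:BSD} applied with $A(u) = u^q - u$, since $X$ is defined by $f(x) - g(y) - t = 0$ and $t = A(u)$ in $K_q$.

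For the analytic-rank lower bound, I would apply Corollary~\ref{cor:analytic-ranks} with $F = k(t)$ and ramified set $S = \{\infty\}$. The conductor-parity hypothesis follows from a Riemann--Hurwitz count: $f$ has exactly $1$ non-polar critical point and $g$ has exactly $N - 1$ non-polar critical points. Under the genericity hypothesis the singular fibers of $\pi_1: \XX \to \P^1_{k,t}$ away from $\infty$ are precisely the $1 \cdot (N-1) = N - 1$ ordinary double points at the distinct values $f(x_1) - g(y_j)$, each contributing $1$ to the conductor exponent of $J_X$. Summing over Galois orbits, the part of the conductor of $J_X$ away from $\infty$ has total degree $N - 1$, which is odd since $N$ is even.

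The main obstacle is showing that $J_X$ is tamely ramified at $t = \infty$. I would approach this by analyzing the local model at the base point $(\infty, \infty)$ where $f$ and $g$ have polar orders $2$ and $N$, both prime to $p$. One route is to run the resolution algorithm from the proof of Proposition~\ref{prop:blowupgenus} and verify that every component of the fiber at $\infty$ has multiplicity prime to $p$, from which tameness of the monodromy follows; a cleaner alternative is to exhibit a tame base change such as $s^N = t^{-1}$ after which $X$ acquires semistable reduction locally at $s = 0$. With both hypotheses verified, Corollary~\ref{cor:analytic-ranks} yields $\ord_{s=1} L(J_X/K_q, s) \ge (r^\nu - 1)/(2\nu)$ for $q = r^{2\nu}$, which, combined with BSD, gives $\rk J_X(K_q) \ge (r^\nu - 1)/(2\nu)$, establishing unboundedness in the Artin--Schreier tower.
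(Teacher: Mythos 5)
Your proposal is correct and follows essentially the same route as the paper: genus via Proposition~\ref{prop:blowupgenus}, BSD via the DPC construction, conductor parity from the $N-1$ nodal fibers, and tameness at $t=\infty$ feeding into Corollary~\ref{cor:analytic-ranks}. The paper carries out exactly your ``cleaner alternative'' for tameness---after normalizing $f(x)=x^2$ it makes the explicit substitution $t=v^{-N}$, $x=x_1/v$, $y=y_1/v^{N/2}$ and observes that $X$ acquires good (not merely semistable) reduction at $v=0$, so the degree-$N$ tame base change suffices.
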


\begin{proof}
  We may assume that the unique poles of $f$ and $g$ are at infinity,
  so that $f$ and $g$ are polynomials.  After a further change of
  coordinates on $x$ and $t$, we may take $f(x)=x^2$.  Thus $X$ is a
  hyperelliptic curve
\begin{equation}\label{eq:higher-genus-X}
x^2=a_Ny^N+a_{N-1}x^{N-1}+\cdots+a_0+t
\end{equation}
where $a_0,\dots,a_N\in k$ and $a_N \neq 0$.  The BSD conjecture is
true for $J_X$ by Theorem~\ref{thm:Berger} and the genus of $X$ is
$g_X=(N-1)-\delta(2,N)$, as seen in
Proposition~\ref{prop:blowupgenus}.

Our genericity assumption is that the $N-1$ finite critical values of
$g$ are distinct. As in the proof of Proposition
\ref{prop:ell-ranks1}, we see that $X$ has an ordinary, non-separating
double point at $N-1$ places of $\P^1$, and it has good reduction at
all other finite places.  This shows that the degree of the finite
part of the conductor of $X$ is $N-1=2g_X-1$, an odd integer.

We now claim that at $t=\infty$, $X$ obtains good reduction over an
extension of degree $N$.  Since $p \nmid N$ by hypothesis, this
implies that $X$ is tame at $t=\infty$.  To check the claim, let $v$
satisfy $t=v^{-N}$ and change coordinates in (\ref{eq:higher-genus-X})
by setting $x=x_1/v$ and $y=y_1/v^{N/2}$.  The resulting model of $X$
is
$$x_1^{2}=a_Ny_1^{N}+\sum_{i=0}^{N-1}a_iy_1^{i}v^{N-i}+1.$$
This curve visibly has good reduction at $v=0$ which establishes our
claim.

Now Corollary~\ref{cor:analytic-ranks} applies and shows that when
$q=r^{2\nu}$
$$\ord_{s=1}L(J_X/K_q,s)\ge(r^\nu-1)/(2\nu).$$
Since $J_X$ satisfies the BSD conjecture, we get a similar lower bound
on the rank and this completes the proof of the proposition.
\end{proof}

As an explicit example, assume that $p\nmid(2g_X+2)(2g_X+1)$ and take
$N=2g_X+2$, $f(x)=x^2$, and $g(y)=y^N+y$, so that $X$ is the
hyperelliptic curve
$$x^2=y^N+y+t.$$
The finite critical values of $g$ are $\alpha(N-1)/N$ where $\alpha$
runs through the roots of $\alpha^{N-1}=-1/N$, and these values are
distinct under our assumptions on $p$.  Thus this pair $(f,g)$ is
generic and we get an explicit hyperelliptic curve whose Jacobian has
unbounded rank in the tower of fields $K_q$.


\section{A rank formula}\label{s:rank}
In this section, $k$ will be a general field of characteristic $p$,
not necessarily finite.  In the main result, we will assume $k$ is
algebraically closed for convenience, but this is not essential.

\subsection{The Jacobian of $X$}
We write $J_X$ for the Jacobian of the curve $X$ over $K=k(t)$
discussed in Theorem~\ref{thm:Berger}.  Recall that for a power $q$ of
$p$, we set $K_q=k(u)$ where $\wp_q(u)=u^q-u=t$.  Our main goal in this
section is to give a formula for the rank of the Mordell-Weil group
(as defined just below) of $J_X$ over $K_q$.

First we recall the $K_q/k$-trace of $J_X$, which we denote by
$(B_q,\tau_q)$.  By definition, $(B_q,\tau_q)$ is the final object in
the category of pairs $(B,\tau)$ where $B$ is an abelian variety over
$k$ and $\tau:B\times_kK_q\to J_X$ is a morphism of abelian varieties
over $K_q$.  See \cite{Conrad06} for a modern account.

\begin{prop}
  For every power $q$ of $p$, the $K_q/k$-trace of $J_X$ is
  canonically isomorphic to $J_\CC\times J_\DD$.
\end{prop}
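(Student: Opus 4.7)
The plan is to construct a canonical morphism $\tau_q: (J_\CC \times_k J_\DD) \times_k K_q \to J_{X_q}$ and then show it realizes the $K_q/k$-trace. The construction itself is immediate: the inclusion $X \subset \CC_K \times_K \DD_K$ gives projections $p_\CC: X \to \CC_K$ and $p_\DD: X \to \DD_K$, and pullback of divisor classes yields morphisms on Jacobians over $K$. Using the canonical identifications $J_{\CC_K} = J_\CC \times_k K$ and $J_{\DD_K} = J_\DD \times_k K$, these combine into a morphism $(J_\CC \times_k J_\DD) \times_k K \to J_X$, which base-changes to $K_q$ to give $\tau_q$.

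For the universal property, the strategy is to identify both the target $\mathrm{Tr}_{K_q/k}(J_{X_q})$ and the source $J_\CC \times_k J_\DD$ with $\Pic^0(\XX_q)$. Since $\pi_q: \XX_q \to \YY_q \simeq \P^1_{k,u}$ has trivial base Picard variety, a standard argument for fibered surfaces (via the Leray spectral sequence or the relative Picard functor, as in \cite[6.3.1]{Ulmer14b}) yields a canonical isogeny $\mathrm{Tr}_{K_q/k}(J_{X_q}) \sim \Pic^0(\XX_q)$. Next I would squeeze $\Pic^0(\XX_q)$ between two canonical copies of $J_\CC \times_k J_\DD$: the dominant rational map $\CC_q \times_k \DD_q \dashrightarrow \XX_q$ from Theorem~\ref{thm:Berger} and Proposition~\ref{prop:DPC} induces an injection $\Pic^0(\XX_q) \hookrightarrow J_{\CC_q} \times_k J_{\DD_q}$ whose image lies in the $\F_q$-invariant subgroup for the diagonal action, and the canonical isogenies $J_\CC \sim (J_{\CC_q})^{\F_q}$, $J_\DD \sim (J_{\DD_q})^{\F_q}$ from the Artin-Schreier covers $\CC_q \to \CC$ and $\DD_q \to \DD$ then bound $\Pic^0(\XX_q)$ inside $J_\CC \times_k J_\DD$ up to isogeny. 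The reverse inclusion comes from pullback along the projections $\XX_q^\circ \to \CC$ and $\XX_q^\circ \to \DD$ inherited from $\XX_q^\circ \subset \CC \times_k \DD \times_k \YY_q$.

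The main obstacle I anticipate is upgrading the resulting isogenies to a canonical isomorphism, which is delicate in characteristic $p$: since $\CC_q \to \CC$ and $\DD_q \to \DD$ are wildly ramified Artin-Schreier covers, the comparison of $J_\CC$ with $(J_{\CC_q})^{\F_q}$ requires careful tracking of inseparable components of pullback and trace. I would address this either by working consistently at the level of $p$-divisible groups and Dieudonn\'e modules, or (more efficiently) by exploiting the uniqueness clause in the universal property of the trace: once all constructions are shown to be canonical, the composition of the two isogenies must be the identity, pinning down the isomorphism.
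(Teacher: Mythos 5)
Your outline follows essentially the same route the paper intends: the paper omits the proof entirely, referring to \cite[Prop.~5.6]{Ulmer13a}, whose argument is precisely this identification of the $K_q/k$-trace with the Picard variety of the DPC surface $\XX_q$ and hence (via the blow-up of $\CC_q\times_k\DD_q$ and the diagonal $\F_q$-quotient) with the $\F_q$-invariants of $J_{\CC_q}\times J_{\DD_q}$, which recover $J_\CC\times J_\DD$. One caveat: your closing appeal to ``canonicity forces the composite isogeny to be the identity'' is not a valid argument on its own --- two canonical isogenies need not compose to the identity --- and the correct way to pin down an isomorphism rather than a mere isogeny is to verify the universal property directly against $\Picvar(\XX_q)$ (using that $\Picvar(\P^1_{k,u})=0$ and that pullback along the totally ramified covers $\CC_q\to\CC$, $\DD_q\to\DD$ is injective on Jacobians), which is exactly where the omitted details live.
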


\begin{proof}
  The proof is very similar to that of \cite[Prop.~5.6]{Ulmer13a},
  although somewhat simpler since our hypothesis that $p$ does not
  divide the pole orders of $f$ and $g$ implies that $\CC_q$ and
  $\DD_q$ are irreducible.  We omit the details.
\end{proof}

\begin{defn}
The {\it Mordell-Weil group\/} of $J_X$ over $K_q$, denoted
$MW(J_X/K_q)$ is defined to be
$$\frac{J_X(K_q)}{\tau_qB_q(k)}.$$
\end{defn}

\subsection{Two numerical invariants}
Recall that we have constructed a smooth projective surface $\XX$
equipped with a generically smooth morphism $\pi_1:\XX\to\P^1_{k,t}$
whose generic fiber is $X/K$.  For each closed point $v$ of
$\P^1_{k,t}$, let $f_v$ denote the number of irreducible components in
the fiber of $\pi_1$ over $v$.  We define
$$c_1(q)=q\sum_{v\neq\infty}(f_v-1)\deg v$$
where the sum is over the finite closed points of $\P^1_{k,t}$.

Using the notation established at the beginning of
Subsection~\ref{ss:data}, we define
$$c_2=\left(\sum_{i=1}^m\sum_{j=1}^n\gcd(a_i,b_j)\right)-m-n+1.$$

We can now state the main result of this section.

\begin{thm}\label{thm:ranks}
Assume that $k$ is algebraically closed. 
Given data $\CC$, $\DD$, $f$, and $g$ as above,
consider the smooth proper model $X$ of 
\[\{f-g-t=0\} \subset \CC \times_k \DD \times_k \spec(K)\] over
$K=k(t)$ as constructed above.  Let $J_X$ be the Jacobian of $X$.
Recall that $K_q=k(u)$ with $u^q-u=t$.  Then, with $c_1(q)$ and $c_2$
as defined above, we have
$$\rk\MW(J_X/K_q) = \rk\Hom_{k-av}(J_{\CC_q}, J_{\DD_q})^{\Fq} - c_1(q) + c_2.$$
Here $\Hom_{k-av}$ denotes homomorphisms of abelian varieties over $k$
and the exponent $\Fq$ signifies those homomorphisms which commute
with the $\Fq$ actions on $J_{\CC_q}$ and $J_{\DD_q}$.
\end{thm}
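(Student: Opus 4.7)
My plan is to combine the Shioda--Tate formula for the fibration $\pi_q:\XX_q\to\YY_q\cong\P^1$ with an analysis of $\NS(\XX_q)$ obtained via the domination by $\CC_q\times_k\DD_q$ furnished by Proposition~\ref{prop:DPC}. Since $\pi_q$ has a section (Remark~\ref{rem:components}) and $\YY_q$ has genus zero, Shioda--Tate gives
$$\rk\MW(J_X/K_q) \;=\; \rk\NS(\XX_q) - 2 - \sum_{v'\in\YY_q}\bigl(f^q_{v'}-1\bigr),$$
where $f^q_{v'}$ is the number of irreducible components of $\pi_q^{-1}(v')$. The fiber-component sum splits according to whether $v'$ lies over a finite place of $\P^1_{k,t}$ or over $\infty$. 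Because $\wp_q:\YY_q\to\P^1_{k,t}$ is \'etale over finite rational points, each finite $v\neq\infty$ has exactly $q$ preimages $v'$, and base change preserves the fiber of $\pi_1$; summing yields precisely $c_1(q)$.

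It remains to compute $\rk\NS(\XX_q)$ and the contribution from the fiber over $\infty_{\YY_q}$. By Proposition~\ref{prop:DPC}, the open subset $\XX_q^\circ$ is the quotient of $\CC_q^\circ\times\DD_q^\circ$ by the diagonal $\F_q$-action; Galois descent for N\'eron--Severi classes identifies
$$\NS(\XX_q^\circ)_\Q \;\cong\; \NS\bigl(\CC_q^\circ\times\DD_q^\circ\bigr)_\Q^{\F_q}.$$
The standard divisorial correspondence decomposition $\NS(\CC_q\times\DD_q)_\Q = \Q[\text{horiz}]\oplus\Q[\text{vert}]\oplus\Hom_{k-av}(J_{\CC_q},J_{\DD_q})_\Q$ is $\F_q$-equivariant, the diagonal action fixing the horizontal and vertical classes. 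Passing to the open part kills both: the removed fibers $\tilde P_i\times\DD_q$ and $\CC_q\times\tilde Q_j$ are the generators of those classes. Hence $\NS(\XX_q^\circ)_\Q\cong\Hom_{k-av}(J_{\CC_q},J_{\DD_q})_\Q^{\F_q}$, and
$$\rk\NS(\XX_q) \;=\; \rk\Hom_{k-av}(J_{\CC_q},J_{\DD_q})^{\F_q} + r_\infty,$$
where $r_\infty$ is the rank of the image in $\NS(\XX_q)_\Q$ of the subgroup generated by irreducible components of $\pi_q^{-1}(\infty_{\YY_q})$ together with the two boundary divisors that vanished when we opened up.

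The final step is to show that $r_\infty - (f^q_\infty-1) = c_2+2$, so that all pieces assemble into the claimed formula. I would trace through the three-stage resolution of $\psi_1$ from the proof of Proposition~\ref{prop:blowupgenus}, then base-change by $\wp_q$ and renormalize via $\phi_q$ to obtain $\XX_q$. At each base point $(P_i,Q_j)$ the resolution produces a number of exceptional divisors in addition to the proper transforms of the $m$ horizontal and $n$ vertical components of $\pi_1^{-1}(\infty)$. Remark~\ref{rem:components} is decisive: after stage three, exactly $\gcd(a_i,b_j)$ exceptional divisors at $(P_i,Q_j)$ map isomorphically onto $\P^1_{k,t}$ and therefore become horizontal with respect to $\pi_q$, whereas the remaining exceptional components, together with the proper transforms of the horizontal and vertical curves, lie in $\pi_q^{-1}(\infty_{\YY_q})$. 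Careful accounting, with the single Shioda--Tate fiber-class relation and the two horizontal/vertical class relations, produces the identity $r_\infty - (f^q_\infty-1) - 2 = \sum_{i,j}\gcd(a_i,b_j) - m - n + 1 = c_2$.

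The hardest step will be this last combinatorial reconciliation: keeping track of how the exceptional chains of the three-stage resolution interact with the Artin--Schreier base change $\XX\times_{\P^1_{k,t}}\YY_q$ and its normalization/smoothing $\phi_q:\XX_q\to\SS_q$, especially distinguishing chains that remain fiberwise at $\infty_{\YY_q}$ from the $\gcd(a_i,b_j)$ ``stage-three'' components that contribute to $\NS(\XX_q)$ but not to $f^q_\infty$. A secondary subtlety is verifying that the descent identification $\NS(\XX_q^\circ)_\Q\cong\NS(\CC_q^\circ\times\DD_q^\circ)_\Q^{\F_q}$ is exact on the nose at the level of ranks; this is reasonable because the quotient map is an $\F_q$-torsor, so there are no higher inseparable or wild contributions to worry about. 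The overall argument parallels, but does not directly reduce to, the Kummer-case rank formula of \cite[\S5]{Ulmer13a}.
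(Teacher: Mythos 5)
Your proposal is correct and follows essentially the same route as the paper's proof: Shioda--Tate for $\pi_q$, the decomposition $\NS(\CC_q\times_k\DD_q)_\Q\cong\Q^2\oplus\Hom(J_{\CC_q},J_{\DD_q})_\Q$ with $\Fq$-invariants taken for the diagonal action, the identification of the finite-fiber sum with $c_1(q)$ via \'etaleness of $\wp_q$ over $\A^1$, and the bookkeeping of exceptional divisors from the resolution in Proposition~\ref{prop:blowupgenus} and Remark~\ref{rem:components}. The only difference is organizational: the paper computes $\rk\NS(\XX_q)$ directly on the quotient of the blown-up product, obtaining $\rk\NS(\XX_q)=\rk\Hom(J_{\CC_q},J_{\DD_q})^{\Fq}+2+\sum_{i,j}(N_{ij}+\gcd(a_i,b_j))$ at once, rather than by excision from $\XX_q^\circ$; your deferred identity $r_\infty-(f^q_\infty-1)-2=c_2$ does check out against the paper's count $f_{\infty,q}=\sum_{i,j}N_{ij}+m+n$, since the $m$ horizontal and $n$ vertical proper transforms contribute only rank $2$ modulo the exceptional classes.
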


\begin{rem}\label{rem:q=1}
  The theorem also holds for $X/K$: We have
  $\rk\MW(J_X/K) = \rk\Hom_{k-av}(J_{\CC}, J_{\DD}) - c_1(1) + c_2$.
  The proof is a minor variation of what follows, but we omit it to
  avoid notational complications.
\end{rem}

\begin{proof}
  The proof is very similar to that of \cite[Thm~6.4]{Ulmer13a}: we
  will construct a good model $\pi_q: \XX_q\to\P^1_{k,u}$ of $X/K_q$
  and use the Shioda-Tate formula.

  First consider the rational map $\psi_q: \CC_q \times_k \DD_q \ratto
  \P^1_{k,u}$ defined by the formula $u=z-w$.  For each pair $(i,j)$
  with $1\le i\le m$, $1\le j\le n$, there is a unique point $(\tilde
  P_i,\tilde Q_j)\in\CC_q\times_k\DD_q$ over $(P_i,Q_j)\in
  \CC\times_k\DD$.  The indeterminacy locus of $\psi_q$ is $\{(\tilde
  P_i,\tilde Q_j)\}$.  At each of these base points, the blow-ups
  required to resolve the indeterminacy of $\psi_q$ are identical to
  those described in the proof of Proposition~\ref{prop:blowupgenus}
  (resolving the indeterminacy of $\psi_1$ at $(P_i,Q_j)$).  For each
  $(i,j)$, write the total number of blow-ups over $\{(\tilde
  P_i,\tilde Q_j)\}$ as $N_{ij}+\gcd(a_i,b_j)$ and recall that
  $N_{ij}$ of the exceptional divisors map to $\infty\in\P^1$ whereas
  $\gcd(a_i,b_j)$ of them map isomorphically onto $\P^1_{k,u}$.  Let
  $\widetilde{\CC_q\times_k\DD_q}$ denote this blow-up of $\CC_q
  \times_k \DD_q$.

  The action of $\F_q^2$ on $\CC_q\times_k\DD_q$ lifts canonically to
  $\widetilde{\CC_q\times_k\DD_q}$.  In fact, it is clear that the
  action of $\F_q^2$ on the tangent space at $\{(\tilde P_i,\tilde
  Q_j)\}$ is trivial, so every point in the exceptional divisor is fixed
  and these are the only fixed points.  Therefore the quotient
  $\XX_q:=\widetilde{\CC_q\times_k\DD_q}/\Fq$ (quotient by the
  diagonal subgroup $\Fq\subset\F_q^2$) is smooth.  The resolved
  morphism $\widetilde{\CC_q\times_k\DD_q}\to\P^1_{k,u}$ factors
  through $\XX_q$ and defines a morphism $\pi_q:\XX_q\to\P^1_{k,u}$
  whose generic fiber is $X/K_q$.

  It is classical (and reviewed in \cite[II.8.4]{Ulmer11}) that
$$\NS(\CC_q\times_k\DD_q)\cong \Hom_{k-av}(J_{\CC_q},J_{\DD_q})
\oplus\Z^2.$$ 
Noting that the blow-ups are fixed by the action of $\F_q^2$ and taking
$\Fq$ invariants, we find that
$$\NS(\XX_q)\cong\Hom_{k-av}(J_{\CC_q},J_{\DD_q})^{\Fq}\oplus\Z^{2
+\sum_{i,j}(N_{ij}+\gcd(a_i,b_j))}$$
and so
\begin{equation}\label{eq:rkNS}
\rk\NS(\XX_q)=\rk\Hom_{k-av}(J_{\CC_q},J_{\DD_q})^{\Fq}+2
+\sum_{i,j}(N_{ij}+\gcd(a_i,b_j)).
\end{equation}

We apply the Shioda-Tate formula \cite{Shioda99} to $\XX_q$.  It says
\begin{equation}\label{eq:rkNS2}
\rk\NS(\XX_q)=\rk MW(J_X/K_q)+2+\sum_{u}(f_{u,q}-1).
\end{equation}
Here the sum is over the closed points of $\P^1_{k,u}$ and $f_{u,q}$
denotes the number of irreducible components in the fiber over $u$.
As we noted at the beginning of the proof of
Proposition~\ref{prop:DPC}, the complement $\XX^0_q$ of
$\pi_q^{-1}(\infty_u)$ in $\XX^0_q$ is the fiber product of
$\wp_q:\A^1_{k,u}\to\A^1_{k,t}\subset \P^1_{k,t}$ and $\pi_1:
\XX\to\P^1_{k,t}$.  Thus
$$\sum_{u\neq\infty}(f_{u,q}-1)=q\sum_{t\neq\infty}(f_{t,1}-1)=c_1(q).$$
Also,
$$f_{\infty,q}=\sum_{i,j}N_{ij}+m+n.$$
Substituting these into equation~\eqref{eq:rkNS2}, comparing with
equation~\eqref{eq:rkNS}, and solving for $\rk MW(J_X/K_q)$ yields the
claimed equality, namely
$$\rk\MW(J_X/K_q) = \rk\Hom_{k-av}(J_{\CC_q}, J_{\DD_q})^{\Fq} -
c_1(q) + c_2.$$
This completes the proof of the theorem.
\end{proof}

\section{Examples---exact rank calculations} \label{s:exactrank} 
In this section, we use the rank formula of Theorem~\ref{thm:ranks} and
results from the Appendix to give examples of various behaviors of
ranks in towers of Artin-Schreier extensions.

\subsection{Preliminaries}
Throughout this section, we let $k=\Fpbar$ and let $f$ and $g$ be
rational functions on $\CC=\DD=\P^1$ with poles of order prime to $p$.
Let $X$ be the smooth proper model of $\{f(x)-g(y)-t =0\} \subset
\P^1_K \times_K \P^1_K$ where $K=k(t)$.  We noted in
Subsection~\ref{ss:ell-curves} above that $X$ is an elliptic curve
when $f$ and $g$ have various types of low degree.  If either $f$ or
$g$ is a linear fractional transformation, then
Proposition~\ref{prop:blowupgenus} shows that $X$ is rational, so its
Jacobian is trivial and there is nothing to say about ranks.  Also, if
$f$ and $g$ are both quadratic and both have a double pole at some
point, then $X$ is again rational by
Proposition~\ref{prop:blowupgenus}.  The first interesting case is
thus when $(f,g)$ has type $(2,1+1)$.

\subsection{Elliptic curves with bounded ranks}\label{ss:ell-bounded}
Assume that $p>2$ and that $(f,g)$ has type $(2,1+1)$, i.e., that $f$
and $g$ are quadratic rational functions such that $f$ has a double
pole and $g$ has two distinct poles.  Up to a change of coordinates on
$x$ and $t$, we may assume that $f(x)=x(x-a)$ with $a\in\{0,1\}$.
Also $g(y)=(y-1)(y-b)/y$ for some parameter $b\in k^\times$.  The
curve $X$ is then the curve of genus 1 with affine equation
$$x(x-a)y-(y-1)(y-b)=ty.$$
The change of coordinates $(x,y)\to(y/x,x)$ brings $X$ into the
Weierstrass form
$$y^2-axy=x^3+(t-1-b)x^2+bx.$$


Examining the discriminant and $j$-invariant of this model shows that
$X$ has $I_1$ reduction at two finite values of $t$ and good reduction
at all other finite places, so $c_1(r)=0$ for all $r$.  It follows
immediately from the definition that $c_2=0$ as well.

Thus our rank formula says that 
$$\rk X(K_q)=\rk\Hom(J_{\CC_q},J_{\DD_q})^{\F_q}.$$

Now since $f$ has a unique pole, by Lemma~\ref{lemma:p-rank},
$J_{\CC_q}$ has $p$-rank 0 for all $q$.  On the other hand, $g$ has
simple poles, so the same lemma shows that $J_{\DD_q}$ is ordinary for
all $q$.  Thus $\Hom(J_{\CC_q},J_{\DD_q})=0$ and we have $\rk
X(K_q)=0$ for all $q$.


\subsection{Higher genus, bounded rank}
The idea of Subsection~\ref{ss:ell-bounded} extends readily to higher
genus.  Namely, it is possible to construct curves $X$ of every genus
such that the rank of $J_X(K_q)$ is a constant independent of $q$.
Let $f$ be the reciprocal of a polynomial of degree $M$ with distinct
roots, and let $g=y^N$.  Then $X$ has genus $g=(M-1)(N-1)$ by
Proposition~\ref{prop:blowupgenus}.

By Lemma~\ref{lemma:p-rank}, $J_{\CC_q}$ is ordinary whereas
$J_{\DD_q}$ has $p$-rank zero.  It follows that
$\Hom(J_{\CC_q},J_{\DD_q})=0$ and {\it a fortiori\/}
$\Hom(J_{\CC_q},J_{\DD_q})^{\Fq}=0$.  Since the term $c_1$ in the rank
formula is non-positive (and goes to $-\infty$ with $q$ if it is not
identically zero), and since $c_2$ is a constant, we see that in fact
$c_1=0$ and the rank of $J_X(K_q)$ is bounded (in fact constant)
independently of $q$.

If $p>2$, we may take $N=2$ and $M$ arbitrary to get examples of every
genus.  If $p=2$, we may take $M=2$ and $N$ odd to get examples of
every even genus.

When $p=2$, a similar construction produces examples of curves with
odd genus.  Indeed, let $\CC$ be an ordinary elliptic curve and let
$f$ be a function on $\CC$ with $M \geq 2$ simple poles.  Applying the
Lemmas~\ref{lemma:ASbasics} and~\ref{lemma:p-rank}, we see that
$\CC_q$ is an ordinary curve of genus $M(q-1)+1$.  If $\DD=\P^1$ and
$g=y^N$ with $N$ odd, then $\DD_q$ has $p$-rank $0$ so
$\Hom(J_{\CC_q},J_{\DD_q})=0$ as before.  By
Proposition~\ref{prop:blowupgenus}, $X$ has genus $N+(M-1)(N-1)$.
Taking $N=3$ yields examples of every odd genus $\geq 5$.

\subsection{Elliptic curves with unbounded ranks}\label{ss:(1+1,1+1)a}
Now suppose that $f=g$ is a quadratic rational function with two
distinct poles.  
We may choose coordinates so that $f(x)=(x-1)(x-a)/x$ and
$g(y)=(y-1)(y-a)/y$ for some parameter $a\in k^\times$.  The curve $X$
is then the curve of genus 1 with affine equation
$$(x-1)(x-a)y-(y-1)(y-a)x=txy.$$
The change of coordinates 
$$(x,y)\to\left(-a\frac{(x-a)^2+ty}{(x-a)y},-a\frac{(x-a)}{y}\right)$$
brings $X$ into the Weierstrass form
$$y^2-txy=x^3-2ax^2+a^2x.$$

Straightforward calculation with Tate's algorithm gives the reduction
types of $X$.  When $p>2$, we find that $X$ has reduction of type
$I_1$ at two finite places ($t=\pm\sqrt{16a}$), reduction of type
$I_2$ at $t=0$, and good reduction at all other finite places.  When
$p=2$, $X$ has reduction type $III$ and conductor exponent 3 at $t=0$,
and it has good reduction at all other finite places.  (Thus, the
analytic ranks result of Corollary~\ref{cor:analytic-ranks} gives a
non-trivial lower bound on the rank of $X(K_q)$ which we will see
presently is not sharp.)  In all cases it follows that $c_1(q)=q$.  It
is also immediate from the definition that $c_2=1$.

Next, we note that $\CC_q=\DD_q$ and so
$$\Hom(J_{\CC_q},J_{\DD_q})^{\Fq}=\en(J_{\CC_q})^{\Fq}.$$
Moreover, by Lemma~\ref{lemma:p-rank}, $\CC_q$ is ordinary.  Since
$k=\Fpbar$, we know from Honda-Tate theory (cf.~Lemma~\ref{lemma:ASendos})
that $\en(J_{\CC_q})$ is commutative of rank $2g_{\CC_q}=2(q-1)$.
Thus we find that
$$\rk X(K_q)=q-1.$$


We will study this example in much more detail in Section~\ref{Slattice}.
In particular, we will give explicit generators of a subgroup of
finite index in $X(K_q)$.

\subsection{Another elliptic curve with unbounded
  ranks}\label{ss:3-3a}
In this example we take $p\neq3$ and $f=g=x^3$.  Then $X$ is the
isotrivial elliptic curve $x^3-y^3-t=0$ with $j$-invariant 0.  The
change of coordinates 
$$(x,y)\to\left(\frac{y+9t}{3x},\frac{y}{3x}\right)$$
brings $X$ into Weierstrass form
$$y^2+9ty=x^3-27t^2.$$
Tate's algorithm shows that $X$ has good reduction away from 0 and
$\infty$, and reduction type $IV$ at $0$.  (In particular, the
analytic ranks result of Corollary~\ref{cor:analytic-ranks} does not give a
non-trivial lower bound on the rank.)  It follows that $c_1(q)=2q$ and
$c_2=2$.  The rank formula shows that $\rk
X(K_q)=\rk\en(J_{\CC_q})^{\Fq}-2(q-1)$.

Suppose that $p\equiv2\pmod3$.  Then the curve $\CC_q$ is
supersingular of genus $q-1$ (in other words, its Newton polygon has
all slopes equal to $1/2$).  Applying Lemma~\ref{lemma:ASendos} part
(3), we find that the rank of $\en(J_{\CC_q})^{\Fq}$ is $4(q-1)$ and
$\rk X(K_q)=2(q-1)$.  In Subsection~\ref{ss:isotrivial-points} below,
we will write down explicit points generating a finite index subgroup
of $X(K_q)$.

\subsection{Higher genus, unbounded rank} \label{S:highergenus}
It is clear from Lemma~\ref{lemma:ASendos} that when we take $f=g$ in
the construction of Section~\ref{s:Berger}, in many cases the main
term of the rank formula, namely $\rk\en(J_{\CC_q})^{\Fq}$, will go to
infinity with $q$.  If we can arrange the geometry so that $c_1$ is
not too large, we will have unbounded ranks.  In this subsection, we
show that this is not difficult to do.

Before giving constructions, we record two easy lemmas about
irreducibility of curves.

\begin{lemma}\label{lemma:few-nodes-irreducible}
Suppose that $C\subset\P^1\times\P^1$ is a curve of bidegree $(M,N)$
which has only ordinary double points as singularities.  Suppose
further that the number of double points is less than $\min(M,N)$.
Then $C$ is irreducible.
\end{lemma}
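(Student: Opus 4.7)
The plan is to prove the contrapositive: that a reducible curve $C \subset \P^1 \times \P^1$ of bidegree $(M,N)$ whose only singularities are ordinary double points must have at least $\min(M,N)$ singular points. I would begin by decomposing $C = C_1 \cup \cdots \cup C_r$ with $r \geq 2$, where each $C_i$ is an irreducible component of bidegree $(M_i,N_i) \neq (0,0)$, so that $\sum_i M_i = M$ and $\sum_i N_i = N$.

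The first substantive step is to count the nodes produced by intersections of distinct components. The intersection pairing on $\P^1 \times \P^1$ gives $C_i \cdot C_j = M_i N_j + M_j N_i$, and because every singularity of $C$ is an ordinary double point, no point of $C$ can lie on three or more components, and at any meeting point the two components must cross transversely (otherwise $C$ would acquire a worse singularity there). Consequently the number of singular points of $C$ is at least
$$\sum_{i<j}(M_i N_j + M_j N_i) \;=\; MN - \sum_i M_i N_i,$$
the equality following by expanding $(\sum_i M_i)(\sum_i N_i)$.

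The main step is then to establish the combinatorial inequality $\sum_i M_i N_i \leq MN - \min(M,N)$. Assuming $M \leq N$ without loss of generality, I would split into two cases. If $\max_i N_i \leq N-1$, then $\sum_i M_i N_i \leq (\max_i N_i)\sum_i M_i \leq M(N-1) = MN - M$. Otherwise $N_{i_0} = N$ for some $i_0$ and $N_j = 0$ for every $j \neq i_0$; the constraint $(M_j,N_j) \neq (0,0)$ then forces $M_j \geq 1$ for at least one $j \neq i_0$, so $M_{i_0} \leq M-1$, and hence $\sum_i M_i N_i = M_{i_0} N \leq (M-1)N \leq MN - M$. Either way the bound gives $\min(M,N)$ nodes, contradicting the hypothesis.

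The only real obstacle is remembering to handle the degenerate cases of purely horizontal or purely vertical components (those with $M_i = 0$ or $N_i = 0$); the two-case split above is precisely tailored to them, and beyond that the argument is elementary bookkeeping with the intersection form of $\P^1 \times \P^1$.
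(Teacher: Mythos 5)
Your proof is correct and takes essentially the same approach as the paper's: the paper splits a reducible $C$ into one piece of bidegree $(i,j)$ and the complementary piece of bidegree $(M-i,N-j)$, observes that their intersection number $(M-i)j+(N-j)i$ is at least $\min(M,N)$ and that each intersection point is a node, and concludes. Your version, with the full decomposition into irreducible components and the identity $\sum_{i<j}(M_iN_j+M_jN_i)=MN-\sum_i M_iN_i$, is just a more explicit rendering of the same intersection-theoretic count.
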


\begin{proof}
  If $C$ is reducible, then it is the union of curves of bidegrees
  $(i,j)$ and $(M-i,N-j)$ for some $(i,j)\neq(0,0)$ and $\neq(M,N)$.
  The intersection number of the two components is $(M-i)j+(N-j)i$ and
  it is not hard to check that the minimum of this function over the
  allowable values of $(i,j)$ is $\min(M,N)$.  Thus if $C$ has fewer
  than $\min(M,N)$ ordinary double points and no other singularities,
  then it cannot be reducible.
\end{proof}

\begin{lemma}\label{lemma:big-Gal-irreducible}
  Let $L$ be an arbitrary field and let $f(x)=a(x)/b(x)\in L(x)$ be a rational
  function of degree $M$ such that $a(x)-b(x)t$ is irreducible and
  separable in $\overline{L}(t)[x]$.  Suppose that the Galois group $G$ of
  the splitting field of $a(x)-b(x)t$ over $\overline{L}(t)$ is a
  2-transitive subgroup of $S_M$.  Then the plane curve with affine
  equation $f(x)-f(y)=0$ \textup{(}or rather
  $a(x)b(y)-a(y)b(x)=0$\textup{)} has exactly two irreducible
  components over $\overline{L}$.
\end{lemma}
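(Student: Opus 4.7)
The plan is to factor the defining polynomial $p(x,y) := a(x)b(y) - a(y)b(x)$ in $\overline{L}[x,y]$ into exactly two irreducible factors. Since $p(x,x) = 0$, there is an obvious factorization $p = (y-x) \cdot h(x,y)$ with $h \in \overline{L}[x,y]$, so it suffices to show that $h$ is irreducible.

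To prove irreducibility of $h$, I would first pass to the field $K := \overline{L}(x)$ and work with $h$ as an element of $K[y]$. Setting $t := f(x) = a(x)/b(x) \in K$, we have $p(x,y) = -b(x) \cdot \bigl(a(y) - t\, b(y)\bigr)$ in $K[y]$. By hypothesis $a(y) - b(y)\,t$ is separable and irreducible in $\overline{L}(t)[y]$, so its splitting field $E$ over $\overline{L}(t)$ has Galois group $G$ acting faithfully on the $M$ distinct roots $\{y_0 = x, y_1, \ldots, y_{M-1}\}$. Since $K$ is generated over $\overline{L}(t)$ by the single root $x$, I can identify $\gal(E/K)$ with the stabilizer $\stab_G(x)$. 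The irreducible factors of $h$ in $K[y]$ then correspond bijectively to the orbits of $\stab_G(x)$ on the complementary set $\{y_1, \ldots, y_{M-1}\}$, and the 2-transitivity of $G$ on the full set of $M$ roots is precisely the assertion that this stabilizer acts transitively on the complement. Hence $h$ is irreducible in $K[y]$.

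Finally, I would descend from $K[y]$ to $\overline{L}[x,y]$ via Gauss's lemma. This requires verifying that $h$ is primitive as an element of $\overline{L}[x][y]$. The polynomial $p$ itself is primitive: since $a$ and $b$ are coprime in $\overline{L}[x]$, no specialization $x \mapsto x_0$ can make every coefficient of $p$ in $y$ vanish simultaneously, so the content of $p$ in $\overline{L}[x][y]$ is a unit. As $(y-x)$ is also primitive, so is $h$, and Gauss's lemma promotes irreducibility over $K$ to irreducibility in $\overline{L}[x,y]$. This produces the factorization $p = (y-x) \cdot h$ into exactly two irreducibles and hence exactly two irreducible components of the plane curve.

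The only delicate point is the Galois-theoretic identification of $\gal(E/K)$ with $\stab_G(x)$, which uses the separability hypothesis essentially; the Gauss's lemma step is routine once the coprimality of $a$ and $b$ has been invoked.
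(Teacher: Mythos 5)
Your proof is correct and is essentially the paper's argument in different clothing: both come down to the observation that 2-transitivity of $G$ forces exactly two Galois orbits (the paper counts orbits of $G$ on ordered pairs of roots via the fiber product $\overline{L}(x)\tensor_{\overline{L}(t)}\overline{L}(y)$, while you equivalently count orbits of $\stab_G(x)$ on the roots after factoring $a(y)-tb(y)$ over $\overline{L}(x)$). Your version adds the routine but worthwhile bookkeeping — primitivity of $a(x)b(y)-a(y)b(x)$ and Gauss's lemma — needed to pass from the factorization over $\overline{L}(x)$ to irreducible components of the affine plane curve, a step the paper leaves implicit.
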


\begin{proof}
  Consider the morphism $\pi_x:\P^1_{L,x}\to\P^1_{L,t}$ given by
  $x\mapsto t=f(x)$.  The corresponding extension of function fields
  is $L(t) \hookrightarrow L(t)[x]/(a(x)-b(x)t)\cong L(x)$.  Make a
  similar definition of $\pi_y$ with $y$ replacing $x$
  everywhere. Then the curve $f(x)-f(y)=0$ is the fiber product of
  $\pi_x$ and $\pi_y$.  The function field (or rather total ring of
  fractions) of this fiber product is
  $\overline{L}(x)\tensor_{\overline{L}(t)}\overline{L}(y)$.  By basic
  field theory, its set of irreducible components over $\overline{L}$ is in
  bijection with the set of orbits of $G$ acting on ordered pairs of
  roots of $a(x)-b(x)t$ in $\overline{L(t)}$.  By our hypotheses,
  there are exactly two of these, namely the diagonal (corresponding
  to the component $x=y$), and the rest.  Thus $f(x)-g(y)=0$ has
  exactly two components.
\end{proof}

We return to the construction of Section~\ref{s:Berger} and consider
the case where $k=\Fpbar$ and $f=g$.  We assume that $f$ has
degree $M\ge2$ and is generic in the following sense: if the critical
values of $f:\P^1_{x} \to\P^1$ are $\alpha_1,\dots,\alpha_{2M-2}$,
then our assumption is that the set of differences $\alpha_i-\alpha_j$
for $i\neq j$ has maximum cardinality, namely $(2M-2)(2M-3)$.  (This
is slightly different than the condition that the pair $(f,f)$ be
generic in the sense of Subsection~\ref{ss:higher-g-unbounded}.)

Our assumption implies in particular that $f$ has $2M-2$ distinct
critical values.  Therefore, the type of $f$ (in the sense of
Subsection~\ref{ss:types}) is $1+1+\cdots+1$, i.e., $f$ has $M$ simple
poles.  In this case the genus of $\CC_q$ is $(M-1)(q-1)$, $J_{\CC_q}$
is ordinary by Lemma~\ref{lemma:p-rank}, and
$$\rk\en(J_{\CC_q})^{\Fq}=2g_{\CC_q}=2(M-1)(q-1)$$
by Lemma~\ref{lemma:ASendos}.

Now let $X$ be the curve over $k(t)$ defined by $f(x)-f(y)-t=0$, with
regular proper model $\pi:\XX\to\P^1_{k,t}$.  By
Proposition~\ref{prop:blowupgenus}, the genus of $X$ is $(M-1)^2$.  Arguing as
in Subsection~\ref{ss:higher-g-unbounded}, we see that the fibers of
$\pi$ away from $t=0,\infty$ are either smooth, or have a single
ordinary double point.  By Lemma~\ref{lemma:few-nodes-irreducible},
they are thus irreducible.  If we assume further that $f$ has a large
Galois group (in the sense of Lemma~\ref{lemma:big-Gal-irreducible}),
then the fiber of $\pi$ over $t=0$ has two components.  Thus $c_1=1$
and our rank formula says that
$$\rk MW(J_X/K_q)=2(M-1)(q-1)-q+c_2.$$
Since $M\ge2$, the rank is unbounded as $q$ varies.  (The reader has
no doubt already noticed that the case $M=2$ is exactly the situation
of Subsection~\ref{ss:(1+1,1+1)a}.)

%

\subsection{Explicit curves of higher genus and unbounded rank}
As a complement to the preceding subsection, we give an example
showing that even with fairly special choices of $f=g$, we get
unbounded ranks.  Namely, let us take $f=1/(x^m-1)$ where $m>1$ is
prime to $2p$.  Then the curve $X$ over $k(t)$ has equation
$$y^m-x^m-t(x^m-1)(y^m-1)=0.$$

It is obvious that the fiber of $\XX$ over $t=0$ is reducible, with
$m$ components.  We claim that for all other finite values of $t$, the
fiber is irreducible.  In other words, we claim that for all $a\in
k^\times$, the plane curve
$$\XX_a:\qquad y^m-x^m-a(x^m-1)(y^m-1)=0$$
is irreducible.  Since the only critical values of $f$ are $0$ and
$-1$, both with multiplicity $m-1$, the fibers away from
$t\in\{0,\pm1,\infty\}$ are smooth and thus, by
Lemma~\ref{lemma:few-nodes-irreducible}, irreducible.  The fiber over
$t=-1$ is the curve
$$x^my^m-2x^m+1=0.$$
We can see that this is irreducible by considering it as a Galois
cover of $\P^1_{k,x}$ with Galois group $\mu_m$.  To wit, the cover is
totally ramified over the regular points $x=(1/2)^{1/m}$, $y=0$, so
the curve must be irreducible.  The argument at $t=1$ is similar and
we omit it.

Using the results of the preceding paragraph, we find that
$c_1(q)=(m-1)q$, $c_2=(m-1)^2$, and our rank formula yields
\begin{align*}
\rk MW(X/K_q)&=2(m-1)(q-1)-(m-1)q+(m-1)^2\\
&=(q+m-3)(m-1)
\end{align*}
which grows linearly with $q$.

\subsection{Analytic ranks and supersingular factors}
In this subsection, we show that the rank formula of
Theorem~\ref{thm:ranks} gives a connection between the symplectic and
orthogonal versions of the analytic
rank lower bounds, i.e., between Corollary~\ref{cor:analytic-ranks} and
Proposition~\ref{prop:ss-factors}.

Consider the situation of Proposition~\ref{prop:ell-ranks1} with
$(f,g)$ generic of type $(2,2+1)$ and $p$ odd.  We suppose that $f$
and $g$ are defined over a finite field $k_0$ of cardinality $r$, and
we let $k=\Fpbar$ and $K=k(t)$.  We assume that $q$ is a power of
$r^2$ and set $K_q=\Fpbar(u)$ with $u^q-u=t$.

The curve $X$ given by $f-g=t$ has genus 1, and by
Proposition~\ref{prop:ell-ranks1} we have
$$\rk X(K_q)-\rk X(K)\ge \sqrt{q}-1.$$
%

The proof of Proposition~\ref{prop:ell-ranks1} shows that $X$ has
three finite places of bad reduction, each with a single ordinary
double point.  It follows from Lemma~\ref{lemma:few-nodes-irreducible}
that the fibers are irreducible, so $c_1(q)=0$.  It is immediate that
$c_2=1$, so the rank formula of Theorem~\ref{thm:ranks} reads
$$\rk X(\Fpbar(u))=\rk\Hom_{\Fpbar}(J_{\CC_q},J_{\DD_q})^{\Fq}+1.$$
The formula of Remark~\ref{rem:q=1} for $\rk X(K)$) shows that $\rk
X(K)=1$.  Considering the lower bound of the preceding paragraph, we
find that
$$\rk\Hom_{\Fpbar}(J_{\CC_q},J_{\DD_q})^{\Fq}\ge\sqrt{q}-1.$$

Now the Jacobian of $\CC_q$ is supersingular of dimension $(q-1)/2$.
By Lemma~\ref{lemma:ASbasics} and Theorem~\ref{thm:slopes}, the
Jacobian of $\DD_q$ has dimension $3(q-1)/2$ and slopes 0,$1/2$, and
1, each with multiplicity $(q-1)$.  The slopes suggest, but do not
prove, that $J_{\DD_q}$ has supersingular elliptic curves as isogeny
factors.  The ranks formula does prove this.  Indeed, if $e$ is the
multiplicity of the supersingular elliptic curve in the Jacobian of
$\DD_q$, then
$$\rk\Hom_{\Fpbar}(J_{\CC_q},J_{\DD_q})^{\Fq}=4\frac{q-1}2e\frac1{q-1}=2e.$$
Therefore $2e\ge \sqrt{q}-1$, and we see that $J_{\DD_q}$ has a
supersingular elliptic curve as an isogeny factor with multiplicity at
least $(\sqrt{q}-1)/2$.  This is exactly the conclusion we would
obtain by applying Proposition~\ref{prop:ss-factors} directly to
$\DD_q$. 

A similar discussion applies when we take $(f,g)$ to have type $(2,N)$
with $N$ even.  If $p\equiv1\pmod N$, slope considerations (as in
Theorem~\ref{thm:slopes}) suggest supersingular factors.  Without this
congruence on $p$, we know little about slopes.  Still, for all
$p\nmid 2N$ we get supersingular factors in $J_{\DD_q}$ directly from
Proposition~\ref{prop:ss-factors} or indirectly via
Corollary~\ref{cor:analytic-ranks} and the rank formula of
Theorem~\ref{thm:ranks}.

\section{Examples---Explicit points and heights} \label{s:explicit}

\subsection{A variant of the construction of Section~\ref{s:Berger}}
There is a slight modification of the construction of
Section~\ref{s:Berger} which is very useful for producing explicit
points.  To explain it, choose data $\CC$, $\DD$, $f$ and $g$ as
usual.  Assume that $f=g$ and that the covers $f:\CC\to\P^1$ and
$g:\DD\to\P^1$ are geometrically Galois, necessarily with the same
group $G$.  For $q$ a power of $p$, we have the curves $\CC_q$ and
$\DD_q$ with equations $z^q-z=f(x)$ and $w^q-w=g(y)$ respectively.
The surface $\XX_q$ is birational to the quotient of
$\CC_q\times\DD_q$ by the diagonal action of $\Fq$, and its function
field is generated by $x$, $y$, and $u$ with $u=z-w$.

Now consider the graph of Frobenius $Fr_q:\CC_q\to\DD_q$, i.e., the
set
$$\{(x,z,y,w)=(x,z,x^q,z^q)\}\subset\CC_q\times\DD_q.$$  
Its image in $\XX_q$ is $\{(x,y,u)=(x,x^q,z-z^q)=(x,x^q,-f(x)\}$ which
is obviously a multisection of $\XX_q\to\P^1_u$ whose degree over
$\P^1_u$ is equal to the degree of $f$.  It is more convenient to have
a section, and we can arrange for this by dividing $\XX_q$ by the
action induced by the diagonal or anti-diagonal action of $G$ on
$\CC_q\times\DD_q$.  (The two quotients can be different, they can
even give rise to curves $X$ with different genera, and which to take
is dictated by the circumstances at hand.)  Calling (a nice model of)
the quotient $\XX'_q$, and writing $X'/K_q$ for the generic fiber of
$\XX'_q\to\P^1_u$, the image of the graph of Frobenius in $\XX'_q$
will then be a section and will give rise to a $K_q$-rational point of
$X'$.  We will use this variant in the two examples that follow.

\subsection{An isotrivial elliptic curve with explicit
  points}\label{ss:isotrivial-points} 
For this example, we assume that $q\equiv2\pmod3$, and we take
$f=g=x^3$.  The curve $X$ thus has equation $x^3-y^3=t$.  We take the
quotient by $G=\mu_3$ acting anti-diagonally (i.e.,
$(x,y)\mapsto(\zeta x,\zeta^{-1}y)$).  The invariants are generated by
$X=xy$ and $Y=-x^3$ and the relation between them is $Y^2+tY=X^3$.
This is the equation of our curve $X'$.  Note that $X'$ and $X$ are
3-isogenous elliptic curves, so they have the same Mordell-Weil rank
and the prime-to-3 part of their Tate-Shafarevich groups are
isomorphic.  In Subsection~\ref{ss:3-3a}, we found that the rank of
$X(\Fqbar(u))$ is $2(q-1)$.  Presently we will find explicit points
generating a subgroup of $X'(\F_{q^2}(u))$ of this rank.

To ease notation, we write $E$ for $X'$.  Note that $E$ is isotrivial,
with $j$-invariant $j=0$.  It becomes isomorphic to a constant curve
$E_0$ over $\Fp(t^{1/3})$.  The underlying $E_0$ is supersingular
since we have assumed that $p\equiv2\mod3$.

Thus our aim is to find points on 
$$E:\qquad Y^2+tY=X^3$$ 
over $K=k(u)$ where $u^q-u=t$ and where
$k$ is the field of $q^2$ elements.

\begin{prop} The torsion subgroup of $E(K)$ is isomorphic to $\Z/3\Z$,
  with non-trivial points $(0,0)$ and $(0,-t)$.
\end{prop}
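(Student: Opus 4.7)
My plan is to exhibit $\Z/3\Z$ as a subgroup of $E(K)_{\mathrm{tors}}$ by direct calculation, and then to use the isotriviality of $E$ to prove the matching upper bound. First I would verify that $(0,0)$ and $(0,-t)$ have order $3$: implicit differentiation of $Y^2 + tY = X^3$ at $(0,0)$ gives the tangent line $Y = 0$, which meets $E$ only in the triple point at the origin, so $3\cdot(0,0) = O$. The inversion law $(X,Y) \mapsto (X,-Y-t)$ then identifies $(0,-t) = -(0,0)$, so $\{O,(0,0),(0,-t)\}$ is a subgroup of order~$3$.

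For the reverse inclusion, I would exploit the fact that $E$ becomes constant after a cubic extension. Let $s$ be a cube root of $t = u^q - u$ and set $L = K(s)$. Because $t$ has simple zeros at the $q$ points $u \in \F_q$, the polynomial $s^3 - t$ is absolutely irreducible, so $L/K$ is a degree-$3$ regular extension with constant field $k = \F_{q^2}$. Since $3 \mid q^2 - 1$ we have $\mu_3 \subset k$, so $L/K$ is Galois with group $\mu_3 = \langle\zeta\rangle$. The substitution $(X,Y) = (s^2 X_0, s^3 Y_0)$ gives an $L$-isomorphism between $E$ and the constant curve $E_0 : Y_0^2 + Y_0 = X_0^3$, which is supersingular because $p \equiv 2 \pmod{3}$ (forced by $q \equiv 2 \pmod{3}$). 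Under this isomorphism $\zeta$ acts on $E_0(L)$-coordinates by $(X_0, Y_0) \mapsto (\zeta X_0, Y_0)$, so $E(K)$ is identified with the $\zeta$-fixed subset of $E_0(L)$.

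The key remaining step is the identification $E_0(L)_{\mathrm{tors}} = E_0(\F_{q^2})$. Since $L/k$ is regular and $E_0$ is proper, any torsion point of $E_0(L)$ extends to a morphism from the smooth model of $L/k$ to $E_0$; its image is a single closed torsion point of $E_0$, whose residue field must lie in the constant field $k$ of $L$. Taking $\zeta$-fixed points of the finite group $E_0(\F_{q^2})$ then forces $\zeta X_0 = X_0$, hence $X_0 = 0$, and the equation of $E_0$ gives $Y_0 \in \{0,-1\}$. These three fixed points pull back under the isomorphism to $O$, $(0,0)$, and $(0,-t)$ on $E$, completing the upper bound. The most delicate piece is the identification $E_0(L)_{\mathrm{tors}} = E_0(\F_{q^2})$, which rests on the regularity of $L/k$; once that is in hand the rest is bookkeeping with the twist automorphism.
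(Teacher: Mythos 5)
Your proposal is correct and follows essentially the same route as the paper: pass to $L=K(t^{1/3})$, untwist $E$ to the constant curve $E_0:\,y'^2+y'=x'^3$, invoke the fact that torsion of a constant curve over a regular extension lies in the constant field, and then observe that the descent condition back to $K$ forces the $X$-coordinate to vanish. The only additions are your explicit check that $(0,0)$ is $3$-torsion and your sketch of the "torsion is constant" fact, which the paper simply cites; both are fine.
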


\begin{proof}
  Let $P=(X,Y) \in E(K)$ be a non-trivial torsion point and let
  $L=K(v)$ where $v^3=t$.  Over $L$, the change of coordinates
  $X=v^2x'$, $Y=v^3y'$ gives an isomorphism between $E$ and the
  constant curve $E_0: (y^{\prime})^2+y'=(x^{\prime})^3$.  It is well
  known (see for example \cite[I.6.1]{Ulmer11}) that the torsion points
  of $E_0(L)$ are defined over the finite constant field.  Thus
  $(X,Y)=(av^2,bv^3)$ for some $a,b\in k$.  Since these coordinates
  are also in $K$, we must have $a=0$, and then it follows easily that
  $b=0$ or $b=-1$, yielding the two points in the statement of the
  proposition.
\end{proof}

Next we construct some non-torsion points.  Using the graph of
Frobenius, we find a point $(X,Y)=(u^{(q+1)/3},u)$ on $E(K)$.  More
precisely, the graph of Frobenius $Fr_q:\CC_q\to\DD_q$ is a curve in
$\CC_q\times\DD_q$.  Its image in $\XX_q$ (which is birational to
$\left(\CC_q\times\DD_q\right)/\Fq$) yields a multisection of
$\XX_q\to\P^1_u$ of degree 3, given by $y=x^q$ and $u=-x^3$.  Taking
the quotient by the action of $\mu_3$ discussed above yields the
section $X=xy=u^{(q+1)/3}$, $Y=-x^3=u$ whose generic fiber is the
desired rational point.

Now using the Galois group of $k(u)/k(t)$, and the automorphism group
of $E$, we get $3q$ points labelled by $i\in\Z/3\Z$ and
$\alpha\in\Fq$:
$$P_{i,\alpha}=\left(\zeta^i(u+\alpha)^{(q+1)/3},u+\alpha\right).$$

Considering the divisor of $Y-(u+\alpha)$ shows that
$\sum_{i\in(\Z/3\Z)}P_{i,\alpha}=0$.  Considering the divisor of
$X-Y^{(q+1)/3}\zeta^i$ shows that $\sum_{\alpha\in\Fq}P_{i,\alpha}$ is
the 3-torsion point $(0,0)$.  Thus the subgroup of $E(K)$ generated by
the $P_{i,\alpha}$ has rank at most $2(q-1)$ and contains all the
torsion points of $E(K)$.  We will see by calculating heights that it
has rank exactly $2(q-1)$.

In the following result, we normalize away the factors of $\log r$ in
the canonical height, as in \cite[Ch.~4]{Ulmer14b}.

\begin{prop} \label{p:height}
The height pairing on $E(K)$ satisfies:
$$\langle P_{i,\alpha},P_{j,\beta}\rangle=
\langle P_{i-j,\alpha-\beta},P_{0,0}\rangle.$$ 
We have
$$\langle P_{i,\alpha},P_{0,0}\rangle=
\begin{cases}
\frac{2(q-1)}3&\text{if $i=0,\alpha=0$}\cr
-\frac23&\text{if $i=0$, $\alpha\neq0$}\cr
-\frac{q-1}3&\text{if $i\neq0$, $\alpha=0$}\cr
\frac13&\text{if $i\neq0$, $\alpha\neq0$}
\end{cases}
$$
\end{prop}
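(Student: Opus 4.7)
The approach has two parts. Part (1) follows from symmetries; part (2) is then reduced to computing $\langle P_{i,\alpha}, P_{0,0}\rangle$ in the four cases $i \in \{0, \neq 0\}$, $\alpha \in \{0, \neq 0\}$ via Shioda's height formula on the minimal regular model $\XX_q \to \P^1_u$ of $E$.

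For part (1), both the Galois group $\gal(K/k(t))$, acting on $K$ by $\sigma_\gamma\colon u\mapsto u+\gamma$, and the automorphism subgroup $\mu_3 \subset \aut(E)$, acting by $\rho_\zeta\colon (X,Y)\mapsto(\zeta X, Y)$, preserve the height pairing. A direct substitution in $P_{i,\alpha} = (\zeta^i(u+\alpha)^{(q+1)/3}, u+\alpha)$ shows $\sigma_\gamma(P_{i,\alpha}) = P_{i,\alpha+\gamma}$ and $\rho_{\zeta^j}(P_{i,\alpha}) = P_{i+j,\alpha}$, so applying $\rho_{\zeta^{-j}}\circ\sigma_{-\beta}$ to both entries of $\langle P_{i,\alpha}, P_{j,\beta}\rangle$ yields the claim.

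For part (2), I would apply Shioda's formula
\[
\langle P, Q\rangle = \chi(\OO_{\XX_q}) + P\cdot O + Q\cdot O - P\cdot Q - \sum_v c_v(P, Q)
\]
after analyzing the fibration. The discriminant $-27t^4$ with $t = u^q - u = \prod_{\alpha\in\F_q}(u-\alpha)$ gives bad reduction at each $\alpha \in \F_q$; a minimal Weierstrass change $X = u^{2m}X_1$, $Y = u^{3m}Y_1$ with $m = (q+1)/3$ does the same at $u = \infty$, and Tate's algorithm identifies every bad fiber as Kodaira type $IV$ (the integrality of $m$ and of the exponents $(q\pm 1)/3$, $(q-2)/3$ appearing later uses $q \equiv 2 \bmod 3$). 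Counting $q+1$ fibers of Euler number $4$ yields $\chi(\OO_{\XX_q}) = (q+1)/3$. One then specializes the explicit coordinates of the sections to identify, at each bad fiber, the component met: $P_{i,\alpha}$ lies on the identity component over $u = \beta \ne -\alpha$, whereas at $u = -\alpha$ and at $u = \infty$ it specializes to the Weierstrass singularity $(0,0)$ and a single explicit blow-up ($X_1 = X/u$, $Y_1 = Y/u$, and the analogous blow-up in the uniformizer $v = 1/u$ at $\infty$) places it on a specific non-identity component. Affineness over $\A^1_u$ together with the fact that $P_{i,\alpha}$ avoids $O$ at $\infty$ forces $P_{i,\alpha}\cdot O = 0$. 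The mutual intersection numbers $P_{i,\alpha}\cdot P_{j,\beta}$ concentrate at those singular Weierstrass points where both sections resolve to the same point, where a short series expansion in the local coordinate reads off the intersection multiplicity in terms of $i-j$ and $\alpha-\beta$.

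Plugging these inputs into Shioda's formula with the standard local correction table for a Kodaira $IV$ fiber (namely $0$ when one section meets the identity component, $2/3$ when both meet the same non-identity component, and $1/3$ when they meet distinct non-identity components) produces the four claimed values. The main technical obstacle is the blow-up analysis at $u = \infty$: one must compute the minimal Weierstrass model explicitly and then carry out a blow-up in local coordinates to determine which component each $P_{i,\alpha}$ hits and with what intersection multiplicity against $P_{0,0}$. As a consistency check, the divisors of $Y-(u+\alpha)$ and $X-\zeta^i Y^{(q+1)/3}$ on $E$ yield $\sum_i P_{i,\alpha} = 0$ and $\sum_\alpha P_{i,\alpha} = (0,0) \in E(K)_{\mathrm{tors}}$, so one must have $\sum_i \langle P_{i,\alpha}, P_{0,0}\rangle = 0 = \sum_\alpha \langle P_{i,\alpha}, P_{0,0}\rangle$, and indeed the four values in the statement satisfy these relations.
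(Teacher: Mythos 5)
Your proposal is correct and follows essentially the same route as the paper's proof: reduce to $\langle P_{i,\alpha},P_{0,0}\rangle$ by the translation and $\mu_3$-symmetries, then apply the Shioda/Cox--Zucker height formula on the minimal model with the type $IV$ fibers at $u\in\F_q$ and $u=\infty$, the identification of which component each section meets (identity except at $u=-\alpha$ and $u=\infty$), the vanishing of $P_{i,\alpha}\cdot O$, and the local intersection multiplicities of the sections. The only differences are cosmetic (Shioda's $\chi+P\cdot O+Q\cdot O-P\cdot Q-\sum_v\mathrm{contr}_v$ versus the paper's $-(P-O)\cdot(Q-O+D)$, and computing $\chi=(q+1)/3$ from Euler numbers rather than quoting the height of the fibration), and your consistency check against the relations $\sum_iP_{i,\alpha}=0$ and $\sum_\alpha P_{i,\alpha}\in E(K)_{\mathrm{tors}}$ confirms the four values.
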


\begin{proof}
We refer to \cite{Shioda99} or \cite[4.3]{Ulmer14b} for a detailed
account of the height pairing.  

That $\langle P_{i,\alpha},P_{j,\beta}\rangle= \langle
P_{i-j,\alpha-\beta},P_{0,0}\rangle$ follows from the fact that $E$ is
defined over $\Fp(t)$ and the height pairing is invariant under the
action of $\gal(K/\Fp(t))$.  Thus to compute the pairing in general,
we may reduce to the case where $(j,\beta)=(0,0)$.  What has to be
computed are intersection numbers and the components above places of
$K$ which contain the reductions of points.

We write $\XX'\to\P^1_u$ for the regular minimal model of $E/K$ and we
write $P_{i,\alpha}$ also for the sections of $\XX'$ corresponding to
the points with these labels.  We write $O$ for the 0-section of
$\XX'$.  With this notation, as in \cite[Lemma 1.18]{CoxZucker79}, the
height is given by
$$\langle P_{i,\alpha},P_{0,0}\rangle=
-\left(P_{i,\alpha}-O\right) \cdot \left(P_{0,0}-O+D\right)$$
where the dot signifies the intersection product on $\XX'$, and where
$D$ is a divisor with $\Q$-coefficients supported in fibers such that
$P_{0,0}-O+D$ is orthogonal to all components of all fibers of
$\XX'\to\P^1$.  The divisor $D$ is easily calculated once we know
which component of each fiber $P_{0,0}$ lands on,
cf.~\cite{CoxZucker79}.

Standard calculations using Tate's algorithm \cite{Tate75} show that
$E$ has reduction type $IV$ at the places $u=\gamma \in\Fq$ and over
$u=\infty$.  The non-identity components correspond to components of
the tangent cone $Y(Y+t)=0$.

The height (or degree) of $\XX'\to\P^1_u$ (in the sense of
\cite[III.2.4]{Ulmer11}) is $(q+1)/3$, so the self-intersection of any
section is $-(q+1)/3$.  So, $O \cdot O=P_{0,0} \cdot
P_{0,0}=-(q+1)/3$.  We see that $P_{i,\alpha} \cdot O=0$ for all
$(i,\alpha)$ because the points $P_{i,\alpha}$ have polynomial
coordinates of low degree.  We briefly summarize the calculations
needed to compute the multiplicity of the intersection of $P_{i,
  \alpha}$ and $P_{0,0}$ for $(i,\alpha) \neq (0,0)$.  (i) If
$\alpha=0$, then the multiplicity is $(q-2)/3$ at $u=0$ and is zero at
the other finite places.  (ii) If $\alpha\neq0$, the equation for the
$Y$ coordinate shows the multiplicity is zero at every finite place.
(iii) At infinity, the multiplicity is $(q+1)/3$ if $i=0$ and is
$(q-2)/3$ if $i \neq 0$.  Putting these local contributions together
gives the ``geometric part'' of the height, namely
$-(P_{i,\alpha}-O)\cdot(P_{0,0}-O)$.

Similar calculations show that $P_{i,\alpha}$ lands on the identity
component when $\alpha\neq\gamma$ and on the non-identity component
indexed by $Y=0$ at $\alpha=\gamma$ and at $\infty$.  Thus the
``correction factor'' $-(P_{i,\alpha}-O)\cdot D$ is $-4/3$ if
$\alpha=0$ and $-2/3$ if $\alpha\neq0$, as in \cite[Lemma
1.19]{CoxZucker79}.  Summing the geometric part and the correction
factor gives the heights asserted in the statement of the proposition.
\end{proof}

Let $V$ be the subgroup of $E(K)$ generated by $\{P_{i,\alpha} \mid
i\in\Z/3\Z, \ \alpha\in\Fq\}$.  It follows immediately from
Proposition~\ref{p:height} that $V$ has rank $2(q-1)$.  Write $A_n^*$
for the lattice of rank $n$ dual to the $A_n$ root lattice
(cf.~\cite[4.6.6]{ConwaySloaneSPLG}).  It is well known to have
discriminant $(n+1)^{n-1}$.  For a real number $a$, write $aA_n^*$ for
the scaling of $A_n^*$ by $a$.  Then the sublattice of $E(K)/tor$
generated by the $P_{i,\alpha}$ is isomorphic to the tensor product
lattice $A_2^*\tensor(\frac13A_{q-1}^*)$.  It thus has discriminant
$$R'=q^{2(q-2)}3^{1-q}.$$

Now $E(K)_{tor}=V_{tor}$ and $R'$ is the discriminant of the lattice
$V/V_{tor}$.  The discriminant of the full lattice $E(K)/E(K)_{tor}$
is thus $R'/[E(K):V]^2$.  The integrality result of
\cite[Prop.~9.1]{Ulmer14a} shows that $[E(K):V]$ divides
$q^{q-2}$.

The degree of the $L$-function of $E$ over $K$ is $2(q-1)$.  Since the
rank of $E(K)$ is at least this big (by the height computation), it is
equal to $2(q-1)$ and the $L$-function of $E$ is
$(1-q^{2(1-s)})^{2(q-1)}$.  (Recall that the ground field $k$ is the
field of $q^2$ elements.)  In particular, the leading term of the
$L$-function at $s=1$ is 1.  Using the BSD formula, we find that
$$[E(K):V]^2=|\sha(E/K)|q^{\frac43(q-2)}.$$
It follows that $q^{\frac23(q-2)}$ divides the index $[E(K):V]$.
Also by \cite[Prop.~9.1]{Ulmer14a}, the order of $\sha(E/K)$ is a
power of $p$ which divides $q^{\frac23(q-2)}$.

%

Experience with analogous situations suggests that there should be an
easily constructed subgroup of $E(K)$ whose index is
$|\sha(E/K)|^{1/2}$.  We now propose a candidate for this subgroup.

First, we note that since $q\equiv2\pmod3$, the curve $\CC_q$ is a
quotient of the Hermitian (Fermat) curve $F$ with equation
$x_1^{q+1}=z_1^q-z_1$ via the map
$$(z_1,x_1)\mapsto(z=z_1,x=x_1^{(q+1)/3}).$$  
Choose elements $\alpha,\beta,\gamma$ in $\Fpbar$ satisfying
$\alpha^{q^2-1}=-1$, $\gamma=\alpha^q$, and
$\beta^q-\beta=\gamma^{q+1}=-\alpha^{q+1}$.  Then we have an
automorphism of $F$ given by
$$(z_1,x_1)\mapsto(z_1+\alpha x_1+\beta,x_1+\gamma).$$
We take the graph of this automorphism and map it to
$\CC_q\times\DD_q$, then on to $\XX_q$ and  $\XX'_q$, which leads to a
rational point.  After some simplifying algebra, we arrive at the
following points:

If $p>2$, for each solution $\beta$ of $\beta^{q-1}=-1$ we have a
point
$$P_\beta:\qquad(X,Y)=\left(-\left(\frac{u^2-\beta^2}{2\beta}\right)^{(q+1)/3},
  \frac{(u-\beta)^{q+1}}{2\beta}\right).$$ 
For each choice of $\beta$, we may act on $P_\beta$ by elements of the
Galois group of $k(u)/k(t)$ (sending $u$ to $u+\alpha$ with
$\alpha\in\Fq$) and the automorphism group of $E$ (sending $X$ to
$\zeta_3X$).  This leads to a set of $3q(q-1)$ points, all with
coordinates in $K=k(u)=\F_{q^2}(u)$.

If $p=2$, it is convenient to index our points by elements
$\beta\in\F_{q^2}\setminus\Fq$.  The corresponding point is
$$P_\beta:\qquad(X,Y)=
\left(\left(\frac{(u+\beta)(u+\beta^q)}{\beta+\beta^q}\right)^{(q+1)/3},
\frac{(u+\beta)^{q+1}}{\beta+\beta^q}\right)$$
and for each value of $\beta$ we can apply automorphisms of $E$ to get
a triple of points.  Again we get a total of $3q(q-1)$ points.

Recall that $V$ is the subgroup of $E(K)$ generated by the
$P_{i,\alpha}$.  Let $V_1$ be the subgroup generated by $V$, the
$P_\beta$, and their images under the action of $\gal(K/\Fp(t))$ and
$\aut(E)$.  We conjecture that $[V_1:V]\overset?=q^{\frac23(q-2)}$ or
equivalently that
$$[E(K):V_1]^2\overset?=|\sha(E/K)|.$$

For all prime powers $q\le32$ with $q\equiv2\pmod3$, we have confirmed
this conjecture by using machine calculation to compute the height
pairings $\langle P_\beta,P_{i,\alpha}\rangle$.



\begin{rem}
  If we again take $f=g=x^3$, but assume that $q\equiv1\pmod3$, then
  we do not get any interesting results about ranks (other than what
  can be deduced from the above if $q$ is a power of $p$ with
  $p\equiv2\pmod 3$).  The reason is that we have little control of
  the Jacobian of $\CC_q$ in this case.  It might well be ordinary, in
  which case we would have $\rk X(\Fqbar(u))=0$.
\end{rem}

\subsection{A family of non-isotrivial elliptic curves with explicit
  points} \label{Slattice}
In this subsection, let $k=\Fq$ and suppose $q$ is odd.  Let
$f(x)=(x-1)(x-a)/x$ for some $a\in k\setminus\{0,1\}$ and let $\XX$ be
a smooth projective surface over $k$ birational to the affine surface
in $\A^3$ with coordinates $(x,y,t)$ defined by
$$f(x)-f(y)=t.$$
We may choose $\XX$ such that there is a morphism $\pi_1:\XX\to\P^1_t$
extending the projection $(x,y,t)\mapsto t$.  Let $\XX_q$ be a smooth
proper model of the fiber product of $\pi_1$ and $\P^1_u\to\P^1_t$,
$t=u^q-u$.  The generic fiber of $\XX_q\to\P^1_u$ is the curve over
$k(u)$ studied in Subsection~\ref{ss:(1+1,1+1)a} above.

Let $\CC_q$ and $\DD_q$ be the smooth projective curves defined by the
equations $z^q-z=f(x)$ and $w^q-w=f(y)$ respectively.  We saw in the
course of analyzing the construction of Section~\ref{s:Berger} that
$\XX_q$ is birational to the quotient of $\CC_q\times_k\DD_q$ by the
diagonal action of $\Fq$.  As in the previous section, we want to take
a further quotient.  Note that since $f$ is quadratic, $\CC_q$ and
$\DD_q$ are double covers of the $z$- and $w$-lines respectively; thus
they are Galois covers with group $\Z/2\Z$.  We let $\XX'_q$ be (a
smooth projective model of) the quotient of $\XX_q$ by the diagonal
action of $\Z/2\Z$.

We have a morphism $\XX'_q\to\P^1_u$ sitting in a commutative diagram
\begin{equation*}
\xymatrix{
\XX_q\ar[d]\ar[r]&\XX'_q\ar[d]\\
\P^1_u\ar@{=}[r]&\P^1_u}
\end{equation*}
We will see in a moment that the generic fiber $X'$ of $\XX'_q\to\P^1_u$ is
an elliptic curve over $k(u)$ and so the morphism $X\to X'$ induced by
$\XX_q\to\XX'_q$ is a 2-isogeny.  It follows that the rank of
$X'(k(u))$ is equal to the rank of $X(k(u))$, and we showed in
Subsection~\ref{ss:(1+1,1+1)a} that this rank is $q-1$.  Our main goal
in this section is to exhibit an explicit set of points generating a
subgroup of $X'(k(u))$ of finite index.

We now proceed to find an explicit equation for $X'$, working
birationally, i.e., with function fields.  The function
field of $\XX_q$ is generated by $x$, $y$, and $u$, with relation
$f(x)-f(y)=u^q-u$.  The action of $\Z/2\Z$ sends $x$ to $a/x$, $y$ to
$a/y$, and fixes $u$.  Let
$$s_1=(x+\frac ax),\qquad s_2=(y+\frac ay),\quad\text{and}\quad 
s_3=(x-\frac ax)(y-\frac ay).$$ 
It is easy to see that the field of invariants of $\Z/2\Z$ acting on
$\XX_q$ is generated by $s_1$, $s_3$ and $u$.  (Note that
$u^q-u=t=s_1-s_2$.)  The relations are generated by
\begin{align*}
s_3^2&=\left(s_1^2-4a\right)\left(s_2^2-4a\right)\\
&=\left(s_1^2-4a\right)\left((s_1-t)^2-4a\right)
\end{align*}
It is thus evident that the generic fiber of $\XX'_q\to\P^1_u$ is the
curve $X'$ of genus 1 with equation
\begin{equation}\label{eq:X'}
s_3^2=\left(s_1^2-4a\right)\left(s_1^2-2ts_1+t^2-4a\right).
\end{equation}

Now for convenience (explained below), we {\it assume that $a$ is a
  square in $k$\/}, say $a=b^2$.  Then $X'$ has the $k(u)$-rational
point $s_1=-2b$, $s_3=0$.  We use this point as origin and make the
substitution 
$$s_1=-2b\left(\frac{X+4bt}{X-4bt}\right)\quad
s_3=\frac{4bt(4b+t)Y}{(X-4bt)^2}$$
which brings $X'$ into the Weierstrass form
\begin{equation}\label{eq:E}
E:\qquad Y^2=X(X+16b^2)(X+t^2).
\end{equation}
(Note that $E$ is closely related to the Legendre curve.)  

We are now going to write down some explicit points of $E(k(u))$.  First
consider the graph of the $q$-power Frobenius morphism
$\CC_q\to\DD_q$, which is the closed subset
$\Gamma\subset\CC_q\times_k\DD_q$ defined by $y=x^q$, $w=z^q$.
The image of $\Gamma$ in $\XX_q$ is defined by $y=x^q$ and
$u=z-z^q=-f(x)$.  The image of $\Gamma$ in $\XX'_q$ is defined by
\begin{align*}
s_1&=f(x)+a+1\\
&=-u+a+1
\end{align*}
and
\begin{align*}
s_3&=\left(x-\frac ax\right)^{q+1}\\
&=\left((-u+a+1)^2-4a\right)^{(q+1)/2}\\
&=\left(u^2-2(a+1)u+(a-1)^2\right)^{(q+1)/2}.
\end{align*}
The image of $\Gamma$ in $\XX'_q$ turns out to be a section and yields
the rational point
\begin{align*}
X&=4bt\left(\frac{u^q-(b-1)^2}{u-(b+1)^2}\right)\\
Y&=\frac{4bt(4b+t)\left(u^2-2(a+1)u+(a-1)^2\right)^{(q+1)/2}}
{\left(u-(b+1)^2\right)^2}
\end{align*}
on $E(k(u))$.

We write $Q(u)$ for the point in $E(k(u))$ defined by the last
display.  Since $E$ is defined over $k(t)$ and the Galois group of
$k(u)/k(t)$ acts via the substitutions $u\mapsto u+\alpha$, it is
clear that $Q(u+\alpha)$ lies in $E(k(u))$ for all $\alpha\in\Fq$.  To
streamline coordinates, let $P(u)=Q(u+(b+1)^2)$, so that $P(u)$ is
given by
\begin{align*}
X&=4bt\left(\frac{u^q+4b}{u}\right)\\
Y&=\frac{4bt(4b+t)\left(u^2+4bu\right)^{(q+1)/2}}{u^2}.
\end{align*}
For $\alpha\in\Fq$, write $P_\alpha$ for $P(u-\alpha)$.

(We note that the curve \eqref{eq:X'} has two evident rational points,
namely the two points at infinity.  Instead of using one of them to go
to the Weierstrass form \eqref{eq:E}, we assumed that $a=b^2$ in $k$
and used the point $s_1=-2b$, $s_2=0$.  This does not affect the model
\eqref{eq:E}, but it does change the points $P(u)$ by translation by a
torsion point.  We made the choices we did because they simplify the
coordinates of $P(u)$.)

Our next goal is to prove that the points $P_\alpha$ generate a
subgroup of $E(k(u))$ of finite index.  Normally we would prove a
result like this using heights, but as we will see below, the height
pairings in this example are exotic, and it seems difficult to
calculate the relevant determinant.  Instead, we proceed using the
construction of Section~\ref{s:rank} directly.

First, we need a preliminary result on $\en_{k-av}(J_{\CC_q})$.
Recall from Lemma~\ref{lemma:ASendos}(2) that
$\en_{k-av}^0(J_{\CC_q})$ is commutative of rank $2(q-1)$ since
$J_{\CC_q}$ is ordinary of dimension $q-1$.

\begin{lemma}\label{lemma:indep-of-endos}
  The subgroup of $\en_{k-av}^0(J_{\CC_q})$ generated by the
  endomorphisms $[\alpha]$ and $\Fr\compose[\alpha]$ for
  $\alpha\in\Fq$ has rank $2(q-1)$, and thus has finite index in
  $\en_{k-av}(J_{\CC_q})$.  \textup{(}Here $\Fr$ is the $q$-power
  Frobenius.\textup{)}
\end{lemma}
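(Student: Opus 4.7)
The plan is to exploit the $\Fq$-equivariant decomposition of $H^1(\CC_q,\Qlbar)$ and the ordinarity of $J_{\CC_q}$. First I would observe that since $\CC = \P^1$ has trivial Jacobian, the pullback–pushforward identity $\pi^*\pi_* = \sum_{\alpha\in\Fq}[\alpha]$ for the quotient $\pi:\CC_q\to\CC$ forces $\sum_{\alpha\in\Fq}[\alpha] = 0$ in $\en(J_{\CC_q})$. Thus the homomorphism $\Z[\Fq]\to\en(J_{\CC_q})$ factors through the quotient $R := \Z[\Fq]/(\sum_\alpha\alpha)$, which has $\Z$-rank $q-1$.

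Next, I would set $V = H^1(\CC_q,\Qlbar)$ and decompose $V = \bigoplus_{\chi\in\widehat{\Fq}} V_\chi$ into $\Fq$-eigenspaces. The trivial eigenspace $V_1$ is $H^1(\CC,\Qlbar) = 0$. The ramification data (from the appendix: the Artin-Schreier cover $\CC_q\to\P^1$ is ramified only at the two poles of $f$, each totally ramified, and $g_{\CC_q}=q-1$) forces $\dim V_\chi = 2$ for every nontrivial $\chi$. Because the $\chi(\alpha)$ with $\chi$ varying over $\widehat{\Fq}\setminus\{1\}$ give pairwise distinct actions on the various $V_\chi$, the induced map $R\otimes\Q \to \en^0(J_{\CC_q})$ is injective, and its image has rank $q-1$.

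Now I would bring Frobenius into play. Since $\Fr$ is the $q$-power Frobenius defined over $\Fq$, it commutes with the $\Fq$-action and preserves each $V_\chi$. Because $J_{\CC_q}$ is ordinary (Lemma~\ref{lemma:p-rank}), its Newton polygon has slopes $0$ and $1$ each with multiplicity $q-1$; by the $\Fq$-equivariance, the two slopes on each $V_\chi$ must be $0$ and $1$. In particular $\Fr|_{V_\chi}$ has two eigenvalues of distinct $p$-adic valuations, so $\Fr|_{V_\chi}$ is not scalar, and the $\Qlbar$-algebra generated by $\chi(\Fq)\cdot\mathrm{id}$ together with $\Fr|_{V_\chi}$ is 2-dimensional (the full diagonal of $\en(V_\chi)$). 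Summing over the $q-1$ nontrivial characters, the image of $R+R\cdot\Fr$ in $\en_{\Qlbar}(V)$ has dimension at least $2(q-1)$. Combined with Lemma~\ref{lemma:ASendos}, which tells us $\en^0(J_{\CC_q})$ is commutative of rank exactly $2(q-1)$, this forces $R + R\cdot\Fr$ to have rank $2(q-1)$ in $\en^0(J_{\CC_q})$, hence finite index in $\en(J_{\CC_q})$.

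The main obstacle is the input from the appendix: one needs both that each nontrivial $\chi$-isotypic component of $H^1(\CC_q,\Qlbar)$ has dimension exactly $2$ (so the $\Fq$-action embeds $R$), and that the slopes distribute uniformly as $\{0,1\}$ across these components (so that $\Fr$ is never a scalar on $V_\chi$). Both facts are standard for Artin-Schreier covers of $\P^1$ ramified at two totally ramified points, but the argument genuinely depends on invoking them; without the uniform slope distribution one could only deduce a lower bound of the form $q-1 + \text{(number of $\chi$ for which $\Fr|_{V_\chi}$ is nonscalar)}$.
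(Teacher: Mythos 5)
Your overall architecture is the same as the paper's: decompose $H^1(\CC_q,\Qlbar)$ into the $q-1$ two-dimensional character eigenspaces $V_\chi$ (with $V_1=H^1(\P^1)=0$), note that the image of $\Q[\Fq]$ acts by scalars on each $V_\chi$ and has rank $q-1$, and then show that $\Fr$ is non-scalar on every $V_\chi$ so that adjoining the $\Fr\compose[\alpha]$ doubles the rank. The bookkeeping is all correct: the relation $\sum_\alpha[\alpha]=0$, the dimension count $\dim V_\chi=2$, the directness of the sum of the two images given non-scalarity of $\Fr|_{V_\chi}$, and the comparison with the rank $2(q-1)$ of $\en^0(J_{\CC_q})$ from Lemma~\ref{lemma:ASendos}.

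The gap is in your justification of the one step that actually requires work: that $\Fr|_{V_\chi}$ is non-scalar for every nontrivial $\chi$. You assert that ``ordinarity plus $\Fq$-equivariance forces the two slopes on each $V_\chi$ to be $0$ and $1$,'' but equivariance only says that $\Fr$ preserves each $V_\chi$; it does not constrain how the $q-1$ unit slopes distribute among the $q-1$ eigenspaces. Poincar\'e duality pairs $V_\chi$ with $V_{\chi^{-1}}$ and sends slope $s$ to $1-s$, so it is a priori consistent with ordinarity, equivariance, and duality that some $V_\chi$ carries both slope-$0$ eigenvalues while $V_{\chi^{-1}}$ carries both slope-$1$ eigenvalues; on such a $V_\chi$ your argument gives nothing, and your own fallback bound ``$q-1$ plus the number of $\chi$ with $\Fr|_{V_\chi}$ nonscalar'' concedes exactly this. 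The paper closes the gap by a different mechanism: the trace of $\Fr$ on $W_\beta$ is an explicit exponential sum $-\sum_x\psi_0(\tr_{\Fq/\Fp}(\beta f(x)))$, and the $p$-adic theory of such sums shows that exactly one of the two reciprocal roots is a $p$-adic unit and the other is not --- in particular the two eigenvalues are distinct and $\Fr|_{W_\beta}$ is non-scalar. If you want to keep your slope-theoretic route, you must supply the equivariant refinement of ordinarity, e.g.\ identify the unit-root subspace with $H^1(\CC_q,\O)$ as an $\Fq$-module and compute its character, or show the multiplicity function $\chi\mapsto\dim(V_\chi\cap V^{\mathrm{unit}})$ is invariant under $\chi\mapsto\chi^{-1}$ and combine that with duality. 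As written, the key assertion is a non sequitur even though its conclusion is true.
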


\begin{proof}
  Lemma~\ref{lemma:ASendos}(1) implies that the subgroup of
  $\en_{k-av}^0(J_{\CC_q})$ generated by the endomorphisms $[\alpha]$
  for $\alpha\in\Fq$ has rank $q-1$.  It is clear that
  $\sum_\alpha[\alpha]=0$, so $\{[\alpha]|\alpha\in\Fq\setminus0\}$
  is linearly independent.

  Since $\Fr$ is not a zero divisor in $\en_{k-av}^0(J_{\CC_q})$, the
  subgroup of $\en_{k-av}^0(J_{\CC_q})$ generated by the endomorphisms
  $\Fr\compose[\alpha]$ for $\alpha\in\Fq$ also has rank $q-1$ and
  $\{\Fr\compose[\alpha]|\alpha\in\Fq\setminus0\}$ is also
  independent.  

  We will show that the two subgroups generated by
  $$\{[\alpha]|\alpha\in\Fq\setminus0\}\qquad\text{and}
\qquad\{\Fr\compose[\alpha]|\alpha\in\Fq\setminus0\}$$ 
are independent.
  To that end, we consider the (effective) action of
  $\en_{k-av}^0(J_{\CC_q})$ on
  $H^1(J_{\CC_q},\Qlbar)=H^1(\CC_q,\Qlbar)$.  Computing the latter
  using the Leray spectral sequence for the finite map $\CC_q\to
  \P^1_{x}$ and decomposing for the action of $\Fq$, we find that
$$H^1(\CC_q,\Qlbar)\cong\oplus_{\beta\in\Fq}W_\beta$$
where $W_\beta$ is the subspace of $H^1(\CC_q,\Qlbar)$ where $\Fq$
acts via the character
$\alpha\mapsto\psi_0(\tr_{\Fq/\Fp}(\alpha\beta))$.  (Here $\psi_0$ is
a fixed character $\Fp\to\Qlbar^\times$.)  Using the
Grothendieck-Ogg-Shafarevich formula, we see that each $W_\beta$ with
$\beta\neq0$ has dimension 2, and $W_0=\{0\}$.  Using an exponential
sum expression for the action of $\Fr$ on $W_{\beta}$, we see that for
$\beta\neq0$, $\Fr$ has two distinct eigenvalues on $W_\beta$, one a
$p$-adic unit, the other a non-unit.

Now suppose that we have a linear dependence, i.e., that there are
integers $a_\alpha$ and $b_\alpha$ such that
$$\sum_{\alpha\in\Fq^\times}a_\alpha[\alpha]+b_{\alpha}[\alpha]\circ\Fr=0$$
in $\en_{k-av}^0(J_{\CC_q})$.  Then as endomorphisms of
$H^1(\CC_q,\Qlbar)$, we have
$$\sum_{\alpha\in\Fq^\times}a_\alpha[\alpha]
=-\sum_{\alpha\in\Fq^\times}b_{\alpha}[\alpha]\circ\Fr.$$ Suppose that
the left hand side is not zero.  Then there is a $\beta$ such that the
left hand side is not 0 on $W_\beta$.  But the left hand side acts as
a (non-zero) scalar on $W_\beta$ (namely $\sum_\alpha
a_\alpha\psi_0(\tr_{\Fq/\Fp}(\alpha\beta))$).  On the other hand, the
right hand side acts as a (non-zero) scalar composed with Frobenius,
and thus has two distinct eigenvalues.  This is a contradiction, and
so we must have
$$\sum_{\alpha\in\Fq^\times}a_\alpha[\alpha]
=\sum_{\alpha\in\Fq^\times}b_{\alpha}[\alpha]\circ\Fr=0.$$
It then follows from Lemma~\ref{lemma:ASendos}(1) that
$a_\alpha=b_\alpha=0$ for all $\alpha$.  This completes the proof of
the lemma.
\end{proof}

We now return to the curve $E$.

\begin{thm}\label{thm:rank}
The points $P_\alpha\in E(k(u))$ generate a subgroup of rank $q-1$ and
of finite index in $E(k(u))$.  The relation among them is that
$\sum_{\alpha\in\Fq}P_\alpha$ is torsion.
\end{thm}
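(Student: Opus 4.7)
The plan is to use the rank formula of Theorem~\ref{thm:ranks} directly, as suggested in the text between the statement of Proposition~\ref{prop:ss-factors} and this theorem. Since $\rk E(k(u)) = q-1$ was already established in Subsection~\ref{ss:(1+1,1+1)a}, it suffices to prove the $q$ points $\{P_\alpha : \alpha \in \Fq\}$ span a subgroup of rank exactly $q-1$ in $E(k(u))$, and that the single relation among them (up to torsion) is $\sum_{\alpha \in \Fq} P_\alpha \equiv 0$.

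First I would trace $P_0$ through the construction behind Theorem~\ref{thm:ranks}. The point $P_0$ arose as the image in $\XX'_q$ of the graph $\Gamma_{\Fr} \subset \CC_q \times_k \DD_q$ of the $q$-power Frobenius, where we identify $\DD_q = \CC_q$ since $f = g$. In the proof of Theorem~\ref{thm:ranks}, the nontrivial summand of $\NS(\XX_q) \otimes \Q$ is identified with $\en^0(J_{\CC_q})^{\Fq}$ via the classical decomposition $\NS(\CC_q \times_k \DD_q) \cong \Z^2 \oplus \Hom_{k\text{-av}}(J_{\CC_q}, J_{\DD_q})$; under this isomorphism the class of $\Gamma_{\Fr}$ maps, modulo the $\Z^2$ summand of horizontal and vertical fiber classes, to the Frobenius endomorphism $\Fr \in \en(J_{\CC_q})$. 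Thus $P_0$ modulo torsion corresponds to $\Fr$ in the rank-formula quotient of $\en^0(J_{\CC_q})^{\Fq}$.

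Second, I would identify what the Galois action of $\gal(k(u)/k(t)) \cong \Fq$ (whose action via $u \mapsto u - \alpha$ sends $P_0$ to $P_\alpha$) induces on the endomorphism side. This action lifts to $\CC_q \times_k \DD_q$ by $\Fq$-translation on a single factor (the two choices of factor differ by the diagonal $\Fq$, which has already been quotiented out in forming $\XX_q$ and hence $\XX'_q$). Under the Hom-NS correspondence this acts on $\en(J_{\CC_q})$ by composition with $[\alpha]$, and so $P_\alpha$ modulo torsion corresponds to $\Fr \circ [\alpha] \in \en(J_{\CC_q})^{\Fq}$.

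Finally, I would invoke Lemma~\ref{lemma:indep-of-endos}: the elements $\{\Fr \circ [\alpha] : \alpha \in \Fq\}$ span a subgroup of $\en^0(J_{\CC_q})$ of rank $q-1$, with unique $\Z$-linear relation $\sum_\alpha \Fr \circ [\alpha] = \Fr \circ \sum_\alpha [\alpha] = 0$; the last equality holds because $\CC = \P^1$ implies $J_\CC = 0$, so the trace $\sum_\alpha [\alpha]$ of the $\Fq$-action on $J_{\CC_q}$ vanishes. Translating back via the correspondence shows that the $P_\alpha$'s span a rank-$(q-1)$ subgroup of $E(k(u))$ with unique relation $\sum_\alpha P_\alpha$ torsion, and comparison with $\rk E(k(u)) = q-1$ forces this subgroup to have finite index. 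The main obstacle will be the bookkeeping in the second step: verifying precisely that the Galois action on $E(k(u))$ matches pre- or post-composition by $[\alpha]$ on endomorphisms, given the further $\Z/2\Z$ quotient producing $\XX'_q$ from $\XX_q$; this is a Néron-Severi calculation in the spirit of the proof of Theorem~\ref{thm:ranks}, but needs to be done with some care.
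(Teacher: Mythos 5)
Your overall strategy is the paper's: push the points through the N\'eron--Severi decomposition from the proof of Theorem~\ref{thm:ranks}, identify $P_\alpha$ with $\Fr\compose[\alpha]\in\en_{k-av}(J_{\CC_q})^{\Fq}$, and quote Lemma~\ref{lemma:indep-of-endos}. The identification of $P_0$ with the graph of Frobenius, of the Galois translates with $\Fr\compose[\alpha]$, and the justification that $\sum_\alpha[\alpha]=0$ are all correct and match the paper.

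The gap is in your last step, ``translating back via the correspondence shows that the $P_\alpha$'s span a rank-$(q-1)$ subgroup.'' The Shioda--Tate map $\NS(\XX_q)\to MW(X/K_q)$ is a \emph{quotient} whose kernel is the trivial lattice (zero section plus fiber components), so linear independence of the classes $\Fr\compose[\alpha]$ in $\NS(\XX_q)$ does not by itself give linear independence of their images in the Mordell--Weil group: the span of the $\Fr\compose[\alpha]$ could a priori meet the trivial lattice in positive rank. The closing comparison with $\rk E(k(u))=q-1$ cannot rescue this, since it only bounds the rank from above. What is needed---and what the paper supplies---is that everything \emph{else} in $\NS(\XX_q)$ dies in $MW$: the $\Z^{10}$ of exceptional divisors and horizontal/vertical classes is $\Fq$-invariant, hence lands in $X(k(t))$, which has rank $0$, hence maps to torsion; and the graphs of the $[\alpha]$ lie on fiber components over $u=\alpha$ (the component $x=y$), hence map to zero. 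Since $MW(X/K_q)$ is generated by the image of $\NS(\XX_q)$, and since Lemma~\ref{lemma:indep-of-endos} says the $[\alpha]$ and $\Fr\compose[\alpha]$ together generate a finite-index subgroup of $\en_{k-av}(J_{\CC_q})^{\Fq}$, the images of the $\Fr\compose[\alpha]$ alone must generate a finite-index subgroup of $MW$; only then does $\rk E(k(u))=q-1$ force the $P_\alpha$ to span rank exactly $q-1$. You flagged the bookkeeping of the Galois action as the main obstacle, but that part is fine; the missing piece is the torsion/vanishing of the complementary N\'eron--Severi classes.
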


\begin{proof}
To see that the subgroup generated by the $P_\alpha$
has finite index in $E(K_q)$, we consider in more detail the geometry
of the construction of Section~\ref{s:rank}.  We have
$\CC_q\times\DD_q$ with its action of $\F_q^2$, its blow up $\SS$, and
the quotient $\SS/\Fq$ by the diagonal $\Fq$.  The resulting
$\XX_q=\SS/\Fq$ is equipped with a morphism $\pi_q$ to $\P^1_u$ whose
generic fiber is $X/k(u)$.  It is also equipped with an action of
$\Fq$ (namely $\F_q^2$ modulo the diagonal) which induces the action of
$\gal(k(u)/k(t))=\Fq$ on $X$.  There is an isogeny $X\to X'\cong E$
and the $P_\alpha$ come from sections of $\pi_q$, so it will suffice
to show that the corresponding points in $X(K_q)$ generate a subgroup
of finite index.

Now the Shioda-Tate theorem tells us that the Mordell-Weil group
$X(K_q)$ is a quotient of the N\'eron-Severi group $\NS(\XX_q)$.  In
the course of the proof of Theorem~\ref{thm:ranks} we saw that
\begin{align*}
\NS(\XX_q)&\cong\en_{k-av}(J_{\CC_q})^{\Fq}\oplus\Z^{2
+\sum_{i,j}(N_{ij}+\gcd(a_i,b_j))}\\
&\cong\en_{k-av}(J_{\CC_q})\oplus\Z^{10}
\end{align*}
where the factor $\Z^{10}$ corresponds to the classes of the exceptional
divisors of the blow-ups and the classes of (the images of)
$\CC_q\times\{pt\}$ and $\{pt\}\times\DD_q$.  

We claim that the classes in the factor $\Z^{10}$ all map to torsion
points in $X(K_q)$.  Indeed, it is clear from the discussion above
that they are fixed by the action of $\Fq$ on $\XX_q$.  Thus they land
in the $\Fq$-invariant part of $X(K_q)$, which is precisely $X(k(t))$,
and we know the latter group has rank 0.  (This claim can also be
checked by straightforward, but tedious, computation.)  It follows
from the claim that the image of $\en_{k-av}(J_{\CC_q})$ in $X(K_q)$
is a subgroup of finite index.

By Lemma~\ref{lemma:indep-of-endos}, the subgroup of
$\en_{k-av}^0(J_{\CC_q})$ generated by the endomorphisms $[\alpha]$
and $\Fr\compose[\alpha]$ for $\alpha\in\Fq$ has finite index in
$\en_{k-av}(J_{\CC_q})$.  The corresponding points in $X(K_q)$ are the
images of the graphs of these endomorphisms.  Moreover, it is easy to
see that the graph of $[\alpha]$ maps to one component of the fiber
over $u=\alpha$ (the component ``$x=y$'') in $\XX_q$.  Therefore,
these endomorphisms map to zero in $X(K_q)$.

It follows that the image of the remaining endomorphisms
$\Fr\compose[\alpha]$ generates a finite index subgroup of $X(K_q)$.
Their images in $E(K_q)$ are precisely the points $P_\alpha$, and we
proved in Subsection~\ref{ss:(1+1,1+1)a} that $E(K_q)$ has rank $q-1$,
so we have established the first claim of the theorem.

Since $\sum P_\alpha$ lies in $E(k(t))$ and we know that the rank of
$E(k(t))$ is zero, the sum must be torsion.  (We could also note that
Lemma~\ref{lemma:ASendos} implies that
$\sum_\alpha\Fr\compose[\alpha]$ is trivial in
$\en_{k-av}(J_{\CC_q})$.)

This completes the proof of the theorem.
\end{proof}

\begin{rem}
  In contrast to the situation of \cite{Ulmer14a}, 2-descent is
  not sufficient to prove that the ``visible'' points $P_\alpha$
  generate a finite index subgroup of $E(K_q)$.  More precisely, when
  $q\neq p$, the index of the subgroup generated by the $P_\alpha$ in
  $E(K_q)$ is divisible by a large power of $2$.
\end{rem}

We turn now to a consideration of the heights of the $P_\alpha$.  For
$\gamma\in\Fq$, write $tr_\gamma$ for the integer defined as follows:
Consider the fiber of the family \eqref{eq:X'} over $t=\gamma$.  In
other words, let $X'_\gamma$ be the smooth projective curve given by
\eqref{eq:X'} with $\gamma$ substituted for $t$.  Then $tr_\gamma$ is
defined by the equality
$$\#X'_\gamma(\Fq)=q-tr_\gamma+1.$$
If $\chi$ denotes the non-trivial quadratic character of $\Fqtimes$,
then we may also define $tr_\gamma$ as
\begin{align*}
tr_\gamma
&=-1-\sum_{\beta\in\Fq}
 \chi\left(\left(\beta^2-4a\right)
          \left(\beta^2-2\gamma\beta+\gamma^2-4a\right)\right)\\
&=-1-\sum_{\beta\in\Fq}
  \chi\left(\beta(\beta+4b)
               (\beta-\gamma)(\beta-\gamma+4b)\right).
\end{align*}
(The first equality comes from the standard count of points on a
hyperelliptic curve as an exponential sum.
The second comes from a change of variables $\beta\mapsto\beta+2b$.) 

\begin{thm}\label{thm:heights}
The height pairings $\langle P_\alpha,P_\beta\rangle$ are given by
$$\langle P_\alpha,P_\beta\rangle=
\begin{cases}
\frac{(3q-1)(q-1)}{4q}+\frac12&\text{if $\alpha=\beta$}\\
\\
\frac{1-3q}{4q}+\frac14\chi(-1)&\text{if $\alpha-\beta=\pm4b$}\\
\\
\frac{1-3q}{4q}+\frac14tr_{\alpha-\beta}&\text{if $\alpha-\beta\neq0,\pm4b$}
\end{cases}
$$
\end{thm}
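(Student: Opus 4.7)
The plan is to compute $\langle P_\alpha,P_\beta\rangle$ via intersection theory on a suitable proper regular model $\pi:\XX'_q\to\P^1_u$ of $E/K_q$, following the template of Proposition~\ref{p:height}. Since $\gal(K_q/k(t))=\Fq$ acts on $\XX'_q$ by $u\mapsto u+c$ and sends $P_\alpha$ to $P_{\alpha-c}$, Galois-invariance of the height pairing gives $\langle P_\alpha,P_\beta\rangle=\langle P_{\alpha-\beta},P_0\rangle$, so one reduces to computing $\langle P_\gamma,P_0\rangle$ for $\gamma=\alpha-\beta$. Apply the standard formula
\[\langle P,Q\rangle=-(P-O)\cdot(Q-O+D_Q),\]
where $D_Q$ is a $\Q$-divisor supported on fibers making $Q-O+D_Q$ orthogonal to each fiber component.

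The next step is to classify the bad fibers via Tate's algorithm applied to $E:Y^2=X(X+16b^2)(X+t^2)$: with discriminant $16^3 b^4 t^4(t-4b)^2(t+4b)^2$ and non-vanishing $c_4$ at each bad place, one has type $I_4$ over $t=0$ and type $I_2$ over $t=\pm 4b$. Pulling back along $t=u^q-u$ produces $q$ fibers of each type in $\P^1_u$, plus a fiber at $u=\infty$ whose type is determined after a Weierstrass transformation absorbing the pole of $t$. One then identifies the component hit by each $P_\alpha$ and by $O$ at each bad fiber using the explicit formulas for the sections: at $u=\alpha$ above $t=0$, $P_\alpha$ specializes to the smooth $2$-torsion point $(-16b^2,0)$ on the identity component; at $u=\delta\in\Fq\setminus\{\alpha\}$ above $t=0$, the specialization is the node $(0,0)$ and the branch of the node is read off the slope $Y_\alpha/X_\alpha$; the $I_2$ fibers and the fiber at infinity are handled analogously. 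From this the correction divisor $D_{P_0}$ and the self-intersection $\langle P_0,P_0\rangle$ are extracted.

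The geometric intersection $P_\gamma\cdot P_0$ for $\gamma\neq 0$ rests on the identity
\[X_\gamma(u)-X_0(u)=\frac{4b\gamma\cdot t(t+4b)}{u(u-\gamma)},\]
which follows from a short calculation using $(u-\gamma)^q=u^q-\gamma$ (since $\gamma\in\Fq$): thus the $X$-coordinates coincide exactly on the loci $t=0$ and $t=-4b$. Combined with the explicit formula
\[Y_\alpha=4bt(4b+t)(u-\alpha)^{(q-3)/2}(u-\alpha+4b)^{(q+1)/2},\]
one computes at each $u=\delta\in\Fq\setminus\{0,\gamma\}$ the slope at the node, $Y_\alpha/X_\alpha\to 4b\bigl((\delta-\alpha)(\delta-\alpha+4b)\bigr)^{(q-1)/2}=4b\chi\bigl((\delta-\alpha)(\delta-\alpha+4b)\bigr)$ by Euler's criterion; this picks out one of the two branches $Y=\pm 4bX$ of the tangent cone. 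Hence $P_\gamma$ and $P_0$ land on the same branch component of the $I_4$ fiber at $u=\delta$ if and only if $\chi\bigl(\delta(\delta+4b)(\delta-\gamma)(\delta-\gamma+4b)\bigr)=+1$.

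The appearance of $tr_\gamma$ is now visible, and is the main point: summing the local intersection indicators over $\delta\in\Fq\setminus\{0,\gamma\}$ (which equals the sum over all of $\Fq$ since the excluded terms contribute $\chi(0)=0$) produces the exponential sum
\[\sum_{\delta\in\Fq}\chi\bigl(\delta(\delta+4b)(\delta-\gamma)(\delta-\gamma+4b)\bigr)=-1-tr_\gamma\]
by the very definition of $tr_\gamma$, and this yields the $\tfrac14 tr_\gamma$ contribution to the height. The remaining local analyses at the $I_2$ places above $t=\pm 4b$, the fiber at $u=\infty$, and the correction divisor account for the constant $\tfrac{1-3q}{4q}$ and (in the diagonal case) the additional shift yielding $\tfrac{(3q-1)(q-1)}{4q}+\tfrac12$. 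Since a direct evaluation of the exponential sum gives $tr_{\pm 4b}=\chi(-1)$, the intermediate case of the theorem is a specialization of the general one rather than a genuinely separate statement. The main obstacle is the careful bookkeeping of components at all bad fibers (particularly at $u=\infty$, where a Weierstrass transformation is required, and at the special points $u=-4b,\gamma-4b$ where the slope formula degenerates and contributes extra local corrections), and verifying that everything adds up to the advertised rational numbers.
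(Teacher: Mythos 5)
Your proposal is correct and follows essentially the same route as the paper's proof: reduce to $\langle P_{\alpha-\beta},P_0\rangle$ by Galois invariance, compute $-(P_\gamma-O)\cdot(P_0-O+D_{P_0})$ on the regular model, read off the component at each $I_4$ fiber over $t=0$ from the sign of $Y/X$ at the node via Euler's criterion, and identify the resulting character sum with $-1-tr_\gamma$. The one caution is that the case $\gamma=\pm4b$ is not literally a specialization of the generic computation (the excluded places $u=0,-4b,\gamma,\gamma-4b$ collide there, changing the local bookkeeping), so that case must be worked out separately as the paper does, even though the final answer agrees with the generic formula because $tr_{\pm4b}=\chi(-1)$.
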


\begin{rems}\mbox{}
\begin{enumerate}
\item If we were to ignore the second term in each of these heights,
  the lattice generated by the $P_\alpha$ would be a scaling of the
  lattice $A_{q-1}^*$.  We may view the actual lattice as a
  ``perturbation'' of $A_{q-1}^*$ where the fluctuations are controlled
  by point counts on an auxiliary family of elliptic curves.  This
  seems to us an exotic phenomenon somewhat reminiscent of mirror
  symmetry.
\item The terms $1/2$ and $\frac14\chi(-1)$ in the height formula may
  also be viewed as traces.  To wit, we consider the ``middle
  extension sheaf'' $\FF$ on $\P^1_t$ associated to the family
  \eqref{eq:X'}.  Then for $\gamma\neq0,\pm4b$, we have
$$tr_\gamma=\tr\left(\Fr_q|\FF_\gamma\right)$$
where $\FF_\gamma$ is the stalk of $\FF$ at a geometric point over
$t=\gamma$.  One can then show that for $\gamma=\pm4b$ we have 
$$\tr\left(\Fr_q|\FF_\gamma\right)=\chi(-1)$$
and for $\gamma=0$ or $\gamma=\infty$ we have
$$\tr\left(\Fr_q|\FF_\gamma\right)=1.$$
\item As a check, we note that the Lefschetz trace formula for $\FF$
  implies that 
$$\sum_{\gamma\in\P^1_t(\Fq)}\tr\left(\Fr_q|\FF_\gamma\right)=0.$$
Thus if we interpret the $1/2$ in the formula for $\langle
P_0,P_0\rangle$ as
$$\frac14\left(\tr\left(\Fr_q|\FF_0\right)+\tr\left(\Fr_q|\FF_\infty\right)\right),$$ 
then we see that the sum $\sum_{\alpha\in\Fq}P_\alpha$ is orthogonal
to all $P_\alpha$, i.e., it is torsion.  This is in agreement with
Theorem~\ref{thm:rank}.
\end{enumerate}
\end{rems}

\begin{proof}[Proof of Theorem~\ref{thm:heights}]
Since $E$ is defined over $k(t)$, and the height pairing is invariant
under the action of $\gal(k(u)/k(t))$, we may reduce to the case where
$\beta=0$.  Thus we consider $\langle P_\alpha,P_0\rangle$ and we have
to compute
$$-(P_\alpha-O)\cdot(P_0-O+D_{P_0}).$$

The height of $E/k(u)$ (in the sense of \cite[III.2.4]{Ulmer11}) is equal
to $q$, so we have $O^2=-q$.

Next we consider $P_\alpha\cdot O$.  Rewriting the coordinates of
$P_\alpha$ slightly, we have
\begin{align*}
X(P_\alpha)&=4b\frac t{u-\alpha}\left(u-\alpha+4b\right)^q\\
Y(P_\alpha)&=4b\frac t{u-\alpha}(4b+t)
    \left(u-\alpha+4b\right)^{(q+1)/2}(u-\alpha)^{(q-1)/2}
\end{align*}
and since $u-\alpha$ divides $t$, we see that these coordinates are
polynomials in $u$.  This shows that $P_\alpha$ and $O$ do not meet
over any finite place of $k(u)$.  Moreover, the degree in $u$ of
$X(P_\alpha)$ is $2q-1$ and the degree in $u$ of $Y(P_\alpha)$ is
$3q-1$.  Since these degrees are $<2q$ and $<3q$ respectively,
$P_\alpha$ and $O$ also do not meet over $u=\infty$.  Thus we have
$$P_\alpha\cdot O=P_0\cdot O=0.$$

Now we consider the disposition of the points at the $3q+1$ places of
bad reduction, namely $u\in\Fq$ (so $t=0$), $u^q-u=t=\pm4b$, and
$u=\infty$.

At the places $u\in\Fq$, $E$ has multiplicative reduction of type
$I_4$.  At $u=\alpha$, $X(P_\alpha)\neq0$, so $P_\alpha$ lands on the identity
component.  At $u=\alpha-4b$, $X(P_\alpha)$ vanishes to high order, so
$P_\alpha$ lands on the component labeled 2.  At $u\in\Fq$,
$u\neq\alpha,\alpha-4b$, $X(P_\alpha)$ and $Y(P_\alpha)$ both vanish
simply and so $P_\alpha$ lands on the component labeled either 1 or
3.  Which case occurs is determined by the sign of 
$$Y(P_\alpha)/X(P_\alpha)
=4b(u-\alpha)^{(q-1)/2}(u-\alpha+4b)^{-(q-1)/2}=\pm4b.$$ 
We make the convention that component 1 corresponds to the case $+4b$
above.  Considering components shows that if $\alpha\neq0$, $P_\alpha$
and $P_0$ do not meet over $u=0$, $-4b$, $\alpha$, or $\alpha-4b$.  At
other places with $u\in\Fq$, they both land on component $1$ or $3$
and we have to look closer for a possible intersection.  Consider the
$X$ coordinate of $P_\alpha$ over $u=\beta$ after the blow up at which
components 1 and 3 appear.  It is
$$4b\phi(\beta)(\beta-\alpha+4b)$$
where $\phi(u)=t/((u-\alpha)(u-\beta)$ so that
$\phi(\beta)=-1/(\beta-\alpha)$.  The $X$ coordinate in question is
thus $4b(\beta-\alpha+4b)/(\alpha-\beta)$.  The map $\alpha\mapsto
4b(\beta-\alpha+4b)/(\alpha-\beta)$ is a linear fractional
transformation, thus injective, so there are no
intersections between $P_\alpha$ and $P_0$ over places with $u\in\Fq$.

Now consider places $u=\beta$ where $u^q-u=t=4b$. At such a place, 
$$X(P_\alpha)(\beta)=16b^2\frac{\beta-\alpha+8b}{\beta-\alpha}\neq0.$$
Also, we have $X(P_\alpha)(\beta)=-16b^2$ if and only
$2\beta=2\alpha-8b$, but this is impossible since $\beta\not\in\Fq$.
This shows that $P_\alpha$ lands on the identity component at these
places.  Also, since $\alpha\mapsto (\beta-\alpha+8b)/(\beta-\alpha)$
is injective, $X(P_\alpha)\neq X(P_{\alpha'})$ if $\alpha\neq\alpha'$,
i.e., there are no points of intersection at these places.

At places $u=\beta$ where $u^q-u=t=-4b$, we have
$X(P_\alpha)(\beta)=-16b^2$ and so $P_\alpha$ always lands on the
non-identity component.  A short calculation reveals that
$$X(P_\alpha)=4b\frac{-\beta^q+\alpha}{\beta-\alpha}
\left((u-\beta)-(u-\beta)^q\right).$$ After the blow up which makes
the non-identity component appear, $X(P_\alpha)$ evaluates to
$4b(-\beta^q+\alpha)(\beta-\alpha)$ at $u=\beta$, and $\alpha\mapsto
4b(-\beta^q+\alpha)(\beta-\alpha)$ is injective.  Thus there are no
points of intersection between $P_\alpha$ and $P_0$ at the places
where $t=-4b$.

Next, we consider the situation at $u=\infty$, where $E$ has
reduction of type $I_{4q}$.  Setting $v=u^{-1}$ and changing
coordinates $X=v^{-2q}X'$, $Y=v^{-3q}Y'$, the point $P_\alpha$ has
coordinates:
\begin{align*}
X'(P_\alpha)&=
4b\left(v(1-\alpha v)^{q-1}-v^q\right)
    \left(1-\alpha v^q+4b\alpha^q\right)\\
Y'(P_\alpha)&=
4b\left(v(1-\alpha v)^{q-1}-v^q\right)
   \left(4bv^q+1-v^{q-1}\right)\left(1+4bv\right)^{(q+1)/2}.
\end{align*}
Since $X'$ and $Y'$ both vanish simply, $P_\alpha$ lands on the
component labeled 1.  In fact, each $P_\alpha$ lands on the same point
on that component.  (In natural coordinates this is the point
$(4b,1)$.)  Moreover, by considering the next term in the Taylor
expansions of $X'$ and $Y'$ near $v=0$, we see that the local
intersection multiplicity in $P_\alpha\cdot P_0$ is 1.

Finally, we consider possible intersections between $P_\alpha$ and
$P_0$ at places where $E$ has good reduction.  At a place where the
$X$-coordinates coincided, we would have
$$4bt\frac{u^q+4b}u=4bt\frac{u^q-\alpha+4b}{u-\alpha}.$$
Since we have already treated the places where $t=0$, we may assume
$t\neq0$ and then the equality above holds if and only if $u^q+4b=u$,
i.e., if and only if $t=-4b$.  We already treated these places as well,
so there are no further points of intersection.

Summarizing, we have shown that the ``geometric'' part of the height
pairing is:
$$-(P_\alpha-O)\cdot(P_0-O)=\begin{cases}
2q&\text{if $\alpha=0$}\\
q-1&\text{if $\alpha\neq0$}.
\end{cases}$$

As for the ``correction factor'' $-D_{P_0}\cdot P_\alpha$, the local
contributions at $t=4b$ are 0, they are $1/2$ at each of the $q$ places
where $t=-4b$, and they are $(4q-1)/4q$ at $u=\infty$.  

The corrections factors over $t=0$ are more interesting.  Namely, at
$u-\beta$ with $\beta\neq\alpha,\alpha-4b$, $P_\alpha$ lands on
component $\pm1$ where the sign is controlled by whether or not
$$(\beta-\alpha)^{(q-1)/2}(\beta-\alpha+4b)^{-(q-1)/2}=1$$
i.e., by whether or not 
$$(\beta-\alpha)(\beta-\alpha+4b)$$
is a square in $\Fq$.  

If $\alpha=0$, then $P_0$ lands on the identity component at $u=0$, on
the component $2$ at $u=-4b$, and on component $\pm1$ at other places
$u=\beta$ with $\beta\in\Fq$.  Thus the contribution to the correction
factor at places over $t=0$ is $-(3q-2)/4$, the total correction
factor is
$$-P_0\cdot D_{P_0}=-\frac{5q^2+2q-1}{4q}$$
and the height pairing is
$$\langle P_0,P_0\rangle =\frac{3q^2-2q+1}{4q}=\frac{(3q-1)(q-1)}{4q}+\frac12.$$

If $\alpha=-4b$, then at $\beta=0$ and $\beta=-4b$, one of $P_0$ or
$P_\alpha$ lands on the identity component and the local contribution
is zero.  At $\beta=-8b$, $P_0$ lands on component $\pm1$ and
$P_\alpha$ lands on component 2 for a local contribution of $-1/2$.
At other places over $t=0$, $P_0$ and $P_\alpha$ lie on components
$\pm1$ and the sum of the local contributions is
\begin{multline*}
-\sum_{\beta\neq0,-4b,-8b}\left(\frac12
   +\frac14\chi\left(\beta(\beta+4b)(\beta+4b)(\beta+8b)\right)\right)\\
=-\frac{q-3}2-\frac14\sum_{\beta\neq0,-4b,-8b}\chi((\beta+8b)/\beta).
\end{multline*}
The last sum is easily seen to be $-1-\chi(-1)$ and so the sum of the
local contributions over all places over $t=0$ is
$$-\frac{2q-5}4+\frac14\chi(-1).$$
The total correction factor is
$$-P_\alpha\cdot D_{P_0}=\frac{-4q^2+q+1}{4q}+\frac14\chi(-1)$$
and the height pairing is
$$\langle P_\alpha,P_0\rangle=\frac{(1-3q)}{4q}+\frac14\chi(-1).$$

The case $\alpha=4b$ is very similar to that of $\alpha=-4b$ and we
leave it as an exercise for the reader.

Now assume that $\alpha\neq0,\pm4b$.  Then at $\beta=0$ and
$\beta=\alpha$, one of $P_0$ or $P_\alpha$ lands on the identity
component and the local contribution is 0.  At $\beta=-4b$ and
$\beta=\alpha-4b$, one of $P_0$ or $P_\alpha$ lands on component 2
and the other lands on component $\pm1$, so we get local
contributions of $-1/2$.  At the other $q-4$ places over $t=0$, both
$P_0$ and $P_\alpha$ land on components $\pm1$.  The sum of the local
contributions at these places is 
\begin{multline*}
-\sum_{\beta\neq0,-4b,\alpha,\alpha-4b}\left(\frac12
   +\frac14\chi\left(\beta(\beta+4b)(\beta-\alpha)(\beta-\alpha+4b)\right)\right)\\
=-\frac{q-4}2+\frac14\left(1+tr_\alpha\right).
\end{multline*}
(For the last equality, see the display just before the statement of
the Theorem.)  Thus the sum of the local contributions at places over
$t=0$ is
$$-\frac{2q-5}4+\frac14tr_\alpha,$$
the total correction factor is
$$-P_\alpha\cdot D_{P_0}=\frac{-4q^2+q+1}{4q}+\frac14tr_\alpha,$$
and the height pairing is
$$\langle P_\alpha,P_0\rangle=\frac{(1-3q)}{4q}+\frac14tr_\alpha.$$

This completes the proof of the Theorem.
\end{proof}

It would be very interesting to have a conceptual explanation for the
appearance of point counts in the height pairings.

\section{Appendix: Auxiliary results on Artin-Schreier
  covers}\label{s:AS-covers}

In this section, we collect results on Artin-Schreier curves and the
Newton polygons and endomorphism algebras of their Jacobians.

\subsection{The genus and $p$-rank of Artin-Schreier
  curves} \label{ss:ASgenus}

Suppose $k$ is a perfect field of characteristic $p$.  Suppose $\CC$
is a smooth projective irreducible curve over $k$ with function field
$F=k(\CC)$.  Let $f(x) \in F$ be a non-constant rational function.
Write $\dvsr_\infty(f(x))=\sum_{i=1}^m a_i P_i$ with distinct $P_i \in
\P^1(\overline{k})$ and all $a_i\neq0$.

For a power $q$ of $p$, let $\CC_{q,f}$ be the smooth projective curve
with function field $F[z]/(z^q-z-f)$ and let $\tau_{q,f}:\CC_{q,f} \to
\CC$ be the morphism corresponding to the field extension $F\into
F[z]/(z^q-z-f(x))$.  We assume throughout that $\CC_{q,f}$ is
geometrically irreducible.  This holds, for example, if $f$ has a pole
of order prime to $p$ at some place of $F$.

\begin{lemma} \label{l:ASgalois} 
  If $k$ contains $\F_q$, then $\tau_{q,f}:\CC_{q,f} \to \CC$ is a
  Galois cover and its Galois group $G$ is canonically identified with
  $\F_q$.
\end{lemma}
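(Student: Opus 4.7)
My plan is to write down $q$ explicit $F$-automorphisms of the extension $F\into F[z]/(\wp_q(z)-f)$ and use the irreducibility hypothesis to conclude that the cover is Galois with Galois group canonically $\F_q$. The algebraic fact I will exploit is that $\wp_q(z)=z^q-z$ is an $\F_q$-additive polynomial whose roots are exactly the elements of $\F_q$: for any $\alpha\in\F_q$, one has $\wp_q(z+\alpha)=\wp_q(z)+\wp_q(\alpha)=\wp_q(z)$, since $\alpha^q=\alpha$. This is the only place the hypothesis $\F_q\subset k$ gets used, and it is the entire content of the lemma.

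Concretely, I would let $\zeta$ denote the image of $z$ in $F[z]/(\wp_q(z)-f)$, so $\wp_q(\zeta)=f$. For each $\alpha\in\F_q\subset k\subset F$, I would define an $F$-algebra endomorphism $\sigma_\alpha$ by $\zeta\mapsto\zeta+\alpha$. This is well defined because
$$\wp_q(\zeta+\alpha)=\wp_q(\zeta)+\wp_q(\alpha)=f+0=f,$$
so the defining relation of the quotient ring is preserved. The assignment $\alpha\mapsto\sigma_\alpha$ is manifestly an injective group homomorphism from $(\F_q,+)$ into the group of $F$-algebra automorphisms of $F[z]/(\wp_q(z)-f)$, since $\sigma_\alpha\circ\sigma_\beta=\sigma_{\alpha+\beta}$ and $\sigma_\alpha=\id$ forces $\alpha=0$.

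By hypothesis $\CC_{q,f}$ is geometrically irreducible, so $F[z]/(\wp_q(z)-f)$ is a field and the extension has degree $q$. Having exhibited $q=|\F_q|$ distinct $F$-automorphisms of a degree-$q$ field extension, I conclude that the extension is Galois with Galois group canonically identified with $\F_q$ via $\alpha\mapsto\sigma_\alpha$. Passing from function fields back to smooth projective curves, this translates to the statement that $\tau_{q,f}:\CC_{q,f}\to\CC$ is a Galois cover with $\gal(\CC_{q,f}/\CC)\cong\F_q$ canonically, which is precisely the claim.

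There is no substantive obstacle; the entire proof rests on the identity $\wp_q(z+\alpha)=\wp_q(z)$ for $\alpha\in\F_q$, together with the standard principle that an extension of degree $n$ admitting $n$ distinct automorphisms is Galois. Without the hypothesis $\F_q\subset k$, the automorphisms $\sigma_\alpha$ would only be defined after base change to a field containing $\F_q$, which is exactly the ``geometrically Galois'' situation indicated in Subsection~\ref{ss:extensions}.
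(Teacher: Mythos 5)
Your proof is correct and is precisely the standard argument underlying the paper's proof, which simply cites \cite[6.4.1(a-b)]{StichtenothAFFC} for this fact: the automorphisms $\zeta\mapsto\zeta+\alpha$ for $\alpha\in\F_q$, defined thanks to $\wp_q(\alpha)=0$, give $q$ distinct $F$-automorphisms of a degree-$q$ (separable, by irreducibility of $\wp_q(z)-f$) extension, forcing it to be Galois with group $\F_q$. Nothing is missing.
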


\begin{proof}
  This is a straightforward generalization of
  \cite[6.4.1(a-b)]{StichtenothAFFC}.
\end{proof}

\begin{lemma}\label{lemma:ASbasics} 
  Let $k$, $q$, $f$, and $\CC_{q,f}$ be as above.  Suppose that all
  the poles of $f$ have order prime to $p$.
\begin{enumerate}
\item The branch locus of $\tau_{q,f}$ is $\{P_1,\dots,P_m\}$.  Above each
  point $P_i$, the cover $\tau_{q,f}$ is totally ramified.  If $k$ contains
  $\Fq$ and $G_i^t$ denotes the ramification subgroup of $G$
  at $P_i$ in the upper numbering, then $G_i^{a_i}=G$ and $G_i^t$ is
  trivial for $t>a_i$.  
\item The genus $g_{q,f}$ of $\CC_{q,f}$ and the genus $g_{\CC}$ of $\CC$ 
are related by the formula 
$$2g_{q,f} -2 = q(2g_{\CC}-2) + (q-1)\sum_{i=1}^m (a_i+1).$$
If particular, if $\CC \simeq \P^1$, then
$g_{q,f}=\frac12(q-1)\left(-2+\sum_{i=1}^m (a_i+1)\right)$.
\end{enumerate}
\end{lemma}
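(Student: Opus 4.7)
The plan for part (1) is to handle étaleness away from the $P_i$ directly, then localize at each $P_i$ to analyze the ramification via local Artin-Schreier theory. First, on the complement of $\{P_1,\ldots,P_m\}$ the function $f$ is regular and the defining polynomial $z^q - z - f$ has constant nonzero derivative $-1$ with respect to $z$, so the cover is étale there; hence the branch locus is contained in $\{P_1,\ldots,P_m\}$. To analyze each $P_i$, I would pass to the completion $\hat{F}_{P_i}$ (extending scalars to $\overline{k}$ if needed to ensure the local extension is Galois), and study $z^q - z = f$ with $\ord_{P_i}(f) = -a_i$ and $\gcd(a_i, p) = 1$.

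The crucial observation is that this local equation is already in ``reduced form'' because of the coprimality condition: no substitution $z \mapsto z + h$ with $h \in \hat{F}_{P_i}$ can reduce the pole order of $f - (h^q - h)$ below $a_i$, since the possible pole orders of $h^q - h$ are either $-\ord(h)$ or $-q\cdot\ord(h)$, and neither can cancel a pole of order $a_i$ coprime to $p$. This forces the extension to be totally ramified, with $z$ acquiring a pole of order $a_i$ in a uniformizer upstairs. For the ramification filtration itself, I would cite the classical Artin-Schreier theory (Serre, \emph{Local Fields}, Chapter~IV), which handles the degree-$p$ case by showing the upper numbering filtration has its unique jump at $a_i$, and then extend to $q = p^\nu$ by noting that every nontrivial character of $G \cong \F_q$ factors through a degree-$p$ Artin-Schreier subquotient whose defining function still has pole order $a_i$ (by the same coprimality obstruction to reduction). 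This yields $G_i^t = G$ for $t \le a_i$ and $G_i^t = \{1\}$ for $t > a_i$.

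For part (2), the plan is to combine part (1) with the Riemann-Hurwitz formula. Writing $d_i$ for the different exponent at the unique place of $\CC_{q,f}$ above $P_i$, Riemann-Hurwitz gives
\[
2g_{q,f} - 2 = q(2g_\CC - 2) + \sum_{i=1}^m d_i.
\]
The conductor-discriminant formula for the abelian $\F_q$-extension expresses $d_i = \sum_{\chi \in \hat{G}} \f_{P_i}(\chi)$, where $\f_{P_i}(\chi)$ is the local Artin conductor exponent of $\chi$. Using the filtration from part (1), the trivial character contributes $0$ and each of the $q - 1$ nontrivial characters has conductor exponent $a_i + 1$, so $d_i = (q-1)(a_i + 1)$. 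Substituting gives the stated formula, and the specialization $\CC \cong \P^1$ follows by setting $g_\CC = 0$.

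The main obstacle I anticipate is the clean verification of the ramification filtration when $q = p^\nu > p$: the reduction to the degree-$p$ case requires carefully checking that the degree-$p$ subcover associated to each nontrivial character of $\F_q$ remains in reduced form with pole order $a_i$, which again rests on the coprimality argument used in part (1). Everything else is bookkeeping with Riemann-Hurwitz and the conductor-discriminant formula.
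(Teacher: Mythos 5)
Your proof is correct and follows essentially the same route as the paper, which simply cites Stichtenoth 6.4.1(c--g): the proof of that result is exactly what you outline (the local Artin--Schreier equation is already in reduced form because $p\nmid a_i$, giving total ramification and a unique jump of the upper-numbering filtration at $a_i$, after which Riemann--Hurwitz combined with the conductor--discriminant formula yields the genus). One tiny imprecision worth fixing: if $\ord_{P_i}(h)<0$ then the pole order of $h^q-h$ is exactly $-q\,\ord_{P_i}(h)$ (and $h^q-h$ is regular otherwise), so the correct statement is that any pole of $\wp_q(h)$ has order divisible by $p$ and therefore can neither equal nor cancel the order-$a_i$ pole of $f$.
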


\begin{proof}
This is a straightforward generalization of \cite[6.4.1(c-g)]{StichtenothAFFC}.
\end{proof} 

Let $\JJ_{q,f}$ be the Jacobian of $\CC_{q,f}$ and let $\JJ_{q,f}[p]$
be its $p$-torsion group scheme.  Recall that the {\it p-rank} of
$\JJ_{q,f}$ is the integer $s$ such that
$\#\JJ_{q,f}[p](\overline{k})=p^s$.  The $p$-rank is at most the genus
$g_{q,f}$ of $\CC_{q,f}$, and $\CC_{q,f}$ and $\JJ_{q,f}$ are said to
be {\it ordinary\/} if the $p$-rank is maximal, i.e., $s=g_{q,f}$
\cite[Section 1.1]{ChaiOort09}.

\begin{lemma} \label{lemma:p-rank} 
  The $p$-rank of $\JJ_{q,f}$ is $s=1+q(s_{\CC} -1) + m(q-1)$.  In
  particular, if $\CC \simeq \P^1$, then $\JJ_{q,f}$ is ordinary if
  and only if the poles of $f$ are all simple, and $\JJ_{q,f}$ has
  $p$-rank $0$ if and only if $f$ has exactly one pole.
\end{lemma}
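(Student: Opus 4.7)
The plan is to invoke the Deuring--Shafarevich formula for the $\Fq$-Galois cover $\tau_{q,f}:\CC_{q,f}\to\CC$ and then read off the claimed formula.  Since the $p$-rank is a geometric invariant, we may assume $k=\overline{k}$; in particular $k\supseteq\Fq$, so Lemma~\ref{l:ASgalois} applies and $\tau_{q,f}$ is genuinely Galois with group $\Fq$, a $p$-group.

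Recall the Deuring--Shafarevich formula: if $\pi:Y\to X$ is a Galois cover of smooth projective curves over an algebraically closed field of characteristic $p$, with Galois group $G$ a $p$-group, then the $p$-ranks of the Jacobians satisfy
$$s_Y-1 \;=\; |G|(s_X-1)+\sum_{P\in X}\bigl(|G|-r_P\bigr),$$
where $r_P$ denotes the number of points of $Y$ lying over $P$.  By Lemma~\ref{lemma:ASbasics}(1), $\tau_{q,f}$ is totally ramified above each $P_i$ (so $r_{P_i}=1$) and \'etale elsewhere (so $r_P=q$).  Substituting $|G|=q$ gives
$$s_{q,f}-1 \;=\; q(s_\CC-1)+\sum_{i=1}^m(q-1) \;=\; q(s_\CC-1)+m(q-1),$$
which is the formula claimed.

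For the consequences when $\CC\cong\P^1$: we have $s_\CC=0$, so $s_{q,f}=(m-1)(q-1)$, and Lemma~\ref{lemma:ASbasics}(2) gives $g_{q,f}=\tfrac12(q-1)\bigl(m-2+\sum_i a_i\bigr)$.  Setting $s_{q,f}=g_{q,f}$ forces $\sum_i a_i=m$, i.e.\ every $a_i=1$ and all poles are simple; setting $s_{q,f}=0$ forces $m=1$, i.e.\ a single pole.  The only real work is invoking Deuring--Shafarevich correctly; the rest is bookkeeping and there is no essential obstacle.
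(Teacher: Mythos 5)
Your proof is correct and takes exactly the route the paper does: the paper's entire proof is a citation of the Deuring--Shafarevich formula, and you have simply written out the substitution (total ramification over the $m$ poles, \'etale elsewhere) together with the comparison against the genus formula of Lemma~\ref{lemma:ASbasics}(2) for the $\P^1$ case. No gaps.
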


\begin{proof}
This follows from the Deuring-Shafarevich formula \cite[Thm.~4.2]{Subrao75}.
\end{proof} 

\subsection{Quotients of Artin-Schreier curves} \label{ss:additive}

This section contains two results about subextensions of the
Artin-Schreier extension $F \into F[z]/(z^q-z-f)$.  The first allows
us to reduce questions about the structure of the Jacobian of the
curve $\CC_{q,f}$ given by the equation $z^q-z=f$ to the case $q=p$;
it is used in Subsection~\ref{ss:slopes}.

\begin{lemma}\label{lemma:decompose}
Suppose $\CC \simeq \P^1$.
Let $S$ be a set of representatives for the cosets of $\Fptimes \subset \Fqtimes$.
For $\mu \in S$, let $\ZZ_\mu$ be the Artin-Schreier curve $z^p-z=\mu f$
and let $\JJ_\mu$ be the Jacobian of $\ZZ_\mu$.  Then there is an isogeny
$$\JJ_{q,f} \sim \oplus_{\mu \in S} \JJ_\mu.$$
\end{lemma}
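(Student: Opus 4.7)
The plan is to exhibit each $\ZZ_\mu$ as the quotient of $\CC_{q,f}$ by a canonical index-$p$ subgroup of its Galois group, assemble the pullbacks into a map $\Phi:\bigoplus_{\mu\in S}\JJ_\mu\to\JJ_{q,f}$, and show $\Phi$ is an isogeny by matching dimensions and then using the $\F_q$-character decomposition of $\ell$-adic $H^1$.

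First, after base-changing to a field containing $\Fq$ so that Lemma~\ref{l:ASgalois} gives $\gal(\CC_{q,f}/\CC)=\F_q$, I would associate to each $\mu\in\Fqtimes$ the index-$p$ subgroup
$$H_\mu = \ker\bigl(\alpha\mapsto\tr_{\Fq/\Fp}(\mu\alpha)\bigr)\subset\F_q,$$
observing that $H_\mu=H_{\mu'}$ iff $\mu/\mu'\in\Fptimes$, so $S$ parameterizes the index-$p$ subgroups of $\F_q$. Writing $q=p^r$, the element $w_\mu := \tr_{\Fq/\Fp}(\mu z)=\sum_{i=0}^{r-1}\mu^{p^i}z^{p^i}$ of the function field of $\CC_{q,f}$ is fixed exactly by $H_\mu$, since the automorphism $z\mapsto z+\alpha$ sends $w_\mu$ to $w_\mu+\tr_{\Fq/\Fp}(\mu\alpha)$; and using $\mu^q=\mu$ one computes $w_\mu^p - w_\mu = \mu^q z^q - \mu z = \mu f$. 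Hence $\CC_{q,f}/H_\mu\cong\ZZ_\mu$.

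Second, let $\Phi$ be the sum of the pullback maps $\pi_\mu^\ast:\JJ_\mu\to\JJ_{q,f}$ along the quotient morphisms $\pi_\mu:\CC_{q,f}\to\ZZ_\mu$. The dimensions agree: by Lemma~\ref{lemma:ASbasics}(2) applied to $\ZZ_\mu$ (whose polar divisor equals that of $f$), one has $g_{\ZZ_\mu}=\tfrac12(p-1)(\sum_i(a_i+1)-2)$, and summing over $|S|=(q-1)/(p-1)$ terms yields exactly $g_{q,f}$. To show $\ker\Phi$ is finite, I would pass to $H^1(-,\Qlbar)$ for some prime $\ell\ne p$: since $p=[\F_q:H_\mu]$ is invertible in $\Qlbar$, Hochschild--Serre identifies $\pi_\mu^\ast H^1(\ZZ_\mu,\Qlbar)$ with $H^1(\CC_{q,f},\Qlbar)^{H_\mu}$, i.e.\ with the direct sum of the $\chi$-isotypic components of $H^1(\CC_{q,f},\Qlbar)$ as $\chi$ ranges over characters of $\F_q$ trivial on $H_\mu$.

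Third, because $\F_q$ is elementary abelian of exponent $p$, every non-trivial character $\chi:\F_q\to\Qlbar^\times$ has order $p$, so $\ker\chi$ equals $H_\mu$ for a unique $\mu\in S$; this partitions the $q-1$ non-trivial characters into $|S|$ blocks of $p-1$ characters each, one per element of $S$. Since the trivial isotypic summand in each $\pi_\mu^\ast H^1(\ZZ_\mu,\Qlbar)$ vanishes (as $H^1(\P^1,\Qlbar)=0$), the image of $\pi_\mu^\ast$ is precisely the sum of the $p-1$ isotypic components in the block associated to $\mu$. Since the blocks for different $\mu\in S$ are disjoint and jointly exhaust all non-trivial characters of $\F_q$, the induced map $\bigoplus_\mu \pi_\mu^\ast$ is an isomorphism on $H^1(-,\Qlbar)$, whence $\Phi$ is an isogeny. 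The main (but rather mild) technical point is the character-counting step; the rest is an explicit identification of quotients together with a routine genus calculation.
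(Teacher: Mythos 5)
Your proof is correct. The paper itself disposes of this lemma in two sentences by citation: it invokes \cite[Proposition 1.2]{GarciaStichtenoth91} to identify $\{\ZZ_\mu\}_{\mu\in S}$ with the degree-$p$ quotient covers of $\CC_{q,f}\to\P^1$, and then \cite[Theorem C]{KaniRosen89} (an idempotent relation in $\Q[\F_q]$ relating the trivial subgroup to the index-$p$ subgroups) to produce the isogeny. You prove both ingredients directly in this special case: the explicit trace element $w_\mu=\tr_{\Fq/\Fp}(\mu z)$, with $w_\mu^p-w_\mu=\mu f$ and $w_\mu$ fixed exactly by $H_\mu=\ker(\alpha\mapsto\tr_{\Fq/\Fp}(\mu\alpha))$, realizes $\ZZ_\mu\cong\CC_{q,f}/H_\mu$ and replaces the Garcia--Stichtenoth citation; and your character-block decomposition of $H^1(\CC_{q,f},\Qlbar)$ (every nontrivial character of the elementary abelian group $\F_q$ has order $p$, hence kernel equal to $H_\mu$ for a unique $\mu\in S$, and the trivial component dies because $H^1(\P^1)=0$) is exactly the cohomological shadow of the Kani--Rosen idempotent relation. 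So the underlying decomposition is the same, but your argument is self-contained and makes the mechanism visible, at the cost of length; the paper's is shorter but opaque without the references. Two tiny remarks: the invertibility you need for $\pi_\mu^\ast H^1(\ZZ_\mu)=H^1(\CC_{q,f})^{H_\mu}$ is that of $|H_\mu|=q/p$ rather than of the index $p$ (immaterial, since $\Qlbar$ has characteristic $0$), and you should note that the whole argument takes place after extending $k$ to contain $\F_q$, which is harmless since an isogeny over $\kbar$ is all the lemma is used for.
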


\begin{proof}
  By \cite[Proposition 1.2]{GarciaStichtenoth91}, the set $\{Z_\mu \to
  \P^1 \mid \mu \in S\}$ is the set of degree $p$ covers $Z \to
  \P^1$ which are quotients of $\tau: \CC_{q,f} \to \P^1$.
  The result then follows from \cite[Theorem C]{KaniRosen89}.
\end{proof}

The second result is used in Section~\ref{Sanalytic} where we need to
work with a more general class of
Artin-Schreier extensions.  To that end, recall that there is a
bijection between finite subgroups of $\Fpbar$ and monic, separable,
additive polynomials, i.e., polynomials of the form
$$A(x) = x^{p^\nu}+\sum_{i=0}^{\nu-1}a_ix^{p^i}$$
with $a_i\in\Fpbar$ and $a_0\neq0$.  The bijection identifies a
subgroup $H$ with the polynomial $A_H(x):=\prod_{\alpha\in
  H}(x-\alpha)$ and identifies a polynomial $A$ with the group $H_A$
of its roots.  For example, when $H$ is the field of order $q$, then
$A_H(x)$ is the polynomial $\wp_q(x)=x^q-x$.  For general $H$, note
that the field generated by the coefficients of $A_H$ is the field of
$p^\mu$ elements, where $p^\mu$ is the smallest power of $p$ such that
$H$ is stable under the $p^\mu$-power Frobenius.

Now suppose $f \in F$ where $F$ is the function field of a smooth
projective curve defined over $k$.  We assume that $f$ has a pole of
order prime to $p$ at some place of $F$.  Suppose $A$ is a monic,
separable, additive polynomial with coefficients in $k$.  Then we have
a field extension
$$K=K_{A,f}=F[x]/(A(x)-f).$$
It is geometrically Galois over $F$ and the Galois group $\gal(\Fpbar
K/\Fpbar F)$ is canonically isomorphic to $H_A$.  This Galois group is
stable under the $r$-power Frobenius since $A$ is assumed to have
coefficients in $k$.

The next lemma is used in Section~\ref{Sanalytic} to reduce questions
about the field $K_{A,f}$ to the analogous questions about the field
$K_{\wp_q,f}$.

\begin{lemma} \label{Ladditive}
Let $A$ be a monic, separable, additive polynomial with roots in $\Fq$.
\begin{enumerate}
\item Then there exists a monic, separable additive polynomial $B$
  such that the composition $A\circ B$ is $\wp_q$.
\item Suppose $f \in F$ has a pole of order prime to $p$ at some place
  of $F$.  Suppose $A \circ B=\wp_q$.  Then $K_{A,f}$ is a subfield of
  $K_{\wp_q,f}$ and the geometric Galois group $\gal(\Fpbar
  K_{A,f}/\Fpbar F)$ is a quotient of $\Fq$, namely $B(\Fq)$.
\end{enumerate}
\end{lemma}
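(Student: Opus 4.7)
My plan is to work in the twisted polynomial ring $R = \Fpbar\{F\}$ with commutation relation $Fc = c^p F$, identifying additive polynomials over $\Fpbar$ with elements of $R$ so that composition corresponds to multiplication. With $q = p^n$ and $d = \log_p|H_A|$, part (1) is then the assertion that $A(F)$ left-divides $\wp_q(F) = F^n-1$ in $R$ via a monic element of $F$-degree $n-d$.

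For part (1), I would first note that since $H_A \subset \F_q$ is pointwise-fixed by the $q$-power Frobenius, the coefficients of $A = \prod_{\alpha \in H_A}(x-\alpha)$ lie in $\F_q$, so $A \in R_0 := \F_q\{F\}$. Inside $R_0$ the element $F^n$ commutes with $F$ and with every $c \in \F_q$ (since $c^{p^n}=c$), so $\wp_q$ is central and $S := R_0/\wp_q R_0$ is a ring, namely the crossed product $\F_q \rtimes \gal(\F_q/\F_p)$. Since the Brauer group of $\F_p$ is trivial, this crossed product is isomorphic to $\en_{\F_p}(\F_q) \cong M_n(\F_p)$, with $\bar{A}$ acting on $\F_q$ by $x \mapsto A(x)$ and having kernel $H_A$ of $\F_p$-dimension $d \geq 1$.

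The right annihilator of $\bar{A}$ in $M_n(\F_p)$ (matrices whose image lies in $H_A$) has $\F_p$-dimension $nd$, while the image in $S$ of the subspace of $R_0$ of $F$-degree $\leq n-d$ has $\F_p$-dimension $n(n-d+1)$ (the restriction to this subspace of the quotient map $R_0 \to S$ is injective, as one checks directly from $\wp_q R_0$ having no nonzero elements of $F$-degree $< n$). Since $nd + n(n-d+1) = n^2 + n > n^2 = \dim_{\F_p} S$, the two subspaces meet nontrivially, yielding a nonzero $B \in R_0$ of $F$-degree $\leq n-d$ with $AB \in \wp_q R_0$, i.e., $AB = \wp_q C$ for some $C \in R_0$. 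Degree comparison forces $C \in \F_q^\times$ and $\deg_F B = n-d$. A short computation of leading coefficients in $R_0$ gives $b_{n-d}^{p^d} = C$, whence $C^{p^{n-d}} = b_{n-d}$, and the right translate $B' := B \cdot C^{-1}$ is then monic of $F$-degree $n-d$ and satisfies $A B' = (AB)C^{-1} = \wp_q$.

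For part (2), the embedding $K_{A,f} \hookrightarrow K_{\wp_q,f}$ is immediate from part (1): send $z$ with $A(z)=f$ to $B(u)$, where $\wp_q(u) = f$, and observe $A(B(u)) = \wp_q(u) = f$. For the Galois group, Lemma~\ref{l:ASgalois} identifies $\gal(\Fpbar K_{\wp_q,f}/\Fpbar F) \cong \F_q$ acting on $u$ by translation; additivity of $B$ gives $B(u) \mapsto B(u) + B(\alpha)$, so the subgroup of $\F_q$ fixing $\Fpbar K_{A,f}$ is exactly $\{\alpha \in \F_q : B(\alpha)=0\} = H_B \cap \F_q$, and Galois theory identifies $\gal(\Fpbar K_{A,f}/\Fpbar F)$ with the quotient $\F_q/(H_B \cap \F_q) \cong B(\F_q)$, the last isomorphism being induced by $B$. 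The main obstacle is part (1): recognizing that $\wp_q$ is central in $R_0$ and that the quotient $R_0/\wp_q R_0 \cong M_n(\F_p)$ is simple are the essential structural inputs, after which the dimension count in $M_n(\F_p)$ and the scaling computation are routine bookkeeping.
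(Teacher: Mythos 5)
Your proof is correct, but part (1) takes a genuinely different route from the paper's. The paper constructs $B$ explicitly as the monic separable additive polynomial whose root group is $A(\F_q)$: then $B\circ A$ is monic additive of degree $q$ and kills $\F_q$, hence equals $\wp_q$, and the identity $A\circ B=\wp_q$ is extracted from $B\circ A=\wp_q$ by using that $\wp_q$ is central in the twisted polynomial ring and cancelling $A$ on the right (the ring being a domain). You instead produce a right factor directly, by passing to the quotient $\F_q\{F\}/\wp_q\F_q\{F\}\cong M_n(\F_p)$ and running a dimension count against the right annihilator of $\bar A$, then normalizing the leading coefficient. Both arguments live in the same twisted polynomial ring and both use centrality of $\wp_q$; yours trades the paper's concrete identification $H_B=A(\F_q)$ (which the paper reuses in Subsection~\ref{ss:chars}, where the character group of $H_A$ is computed as $(\ker B)^\perp=(\im A)^\perp$ --- with your abstract $B$ one would need the extra one-line observation that $B(\F_q)\subseteq H_A$ forces $H_B=A(\F_q)$ by counting) for a slicker existence argument that avoids the order-reversal trick. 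Two small loose ends in your write-up, neither fatal: you should note that the constructed $B'$ is automatically separable, since $AB'=\wp_q$ has nonzero constant term so $B'$ cannot be right-divisible by $F$; and your dimension count silently assumes $d\ge 1$ in spirit, though the inequality $nd+n(n-d+1)>n^2$ and the conclusion in fact survive the degenerate case $A=x$. Part (2) matches the paper's intent (the paper only says it ``follows directly''), and your explicit identification of the fixed group as $\{\alpha:B(\alpha)=0\}$ is the right way to spell it out.
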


\begin{proof}
  Let $B$ be the polynomial identified with the subgroup $A(\Fq)$.
  Then $B\circ A$ has degree $q$ and kills $\Fq$, so must be equal to
  $\wp_q$.  Next, we note that the set of additive polynomials with
  coefficients in $\Fq$ together with the ring structure given by
  addition and composition of polynomials is a (non-commutative)
  domain, and $\wp_q$ is in its center.  (Both of these are most
  easily checked by noting that the ring in question is isomorphic to
  Drinfeld's ring of twisted polynomials $\Fq\{\tau\}$ where $\tau
  a=a^p\tau$ for $a\in \Fq$.)  Since $B\circ A=\wp_q$, we see that
  $A\circ B\circ A=A\circ\wp=\wp\circ A$, and canceling yields the
  first claim $A\circ B=\wp_q$.  The second claim follows directly
  from the first.
\end{proof}

\begin{ex}\label{ex:A}
  Assume that $r$ is a power of an odd prime $p$ and fix a positive
  integer $\nu$.  Let $A(x)=x^{r^\nu}+x$.  The group $H_A$ of roots of
  $A$ generates $\Fq$ where $q=r^{2\nu}$.  Setting $B=\wp_{r^\nu}$, we
  have $A\circ B=\wp_q$.  If $f\in F$ has a pole of order prime to $p$
  at some place of $F$, then the field extension $K_{A,f}$ is a
  subextension of $K_{\wp_q,f}$.
 \end{ex}

\subsection{Slopes of Artin-Schreier curves} \label{ss:slopes}
Next we review the definition of the Newton polygon of a curve $\CC$
of genus $g$ defined over a finite field from \cite[Sections 1.16,
1.18, 3,5, 3.8, 4.38, 4.49, 10.17]{ChaiOort09}.  The Newton polygon of
$\CC$ is the Newton polygon of (the $p$-divisible group of) its
Jacobian $\JJ$.  It is a symmetric Newton polygon of height $2g$ and
dimension $g$; in other words, it is a lower convex polygon in $\R^2$,
starting at $(0,0)$ and ending at $(2g,g)$, whose break points are
integral, such that the slopes $\lambda$ are rational numbers in the
interval $[0,1]$ and the slopes $\lambda$ and $1 - \lambda$ occur with
the same multiplicity.  The Newton polygon is determined by its
sequence of slopes, written in ascending order, and these are the
$p$-adic values of the zeros of the relative Frobenius morphism
$\pi_A$.  More precisely, if $A$ is a simple abelian variety defined
over a finite field $k$ of cardinality $r$, then Tate proved that
$\pi_A$ generates a field which is the center of $\en^0(A)$
\cite[Section 10.17]{ChaiOort09}.  Viewed as an algebraic number,
$\pi_A$ has absolute value $\sqrt{r}$ in every embedding of
$\Q(\pi_A)$ in $\C$ (a Weil $\sqrt{r}$-number).  The slopes of the Newton
polygon of $A$ are the $p$-adic valuations of $\pi_A$ and the
multiplicity of $\lambda$ in the Newton polygon is the sum of the
degrees $[\Q(\pi_A)_v: \Q_p]$ over all places $v$ of $\Q(\pi_A)$ above
$p$ such that $\lambda=v(\pi_A)/v(r)$.  If $\JJ$ is not simple, then
its slopes are the concatenation of the slopes of its simple factors.


Next, for $k$ a finite field of characteristic $p$, a power $q$ of
$p$, and $f\in k(x)$ a rational function with poles of order prime to
$p$, we define a (Hodge) polygon $HP=HP(f,q)$ as follows.
Write the polar divisor of $f$ as $\dvsr_\infty(f)=\sum_{i=1}^ma_iP_i$
where the $a_i$ are all prime to $p$ and the $P_i$ are distinct.
Define a collection of slopes by taking slopes 0 and 1 with
multiplicity $(m-1)(q-1)$ and, for each pole $P_i$ with $a_i>1$,
slopes $1/a_i, 2/a_2,\dots,(a_i-1)/a_i$ each with multiplicity $q-1$.
We have in total $2g_{\CC_{f,q}}$ slopes, which we place in ascending
order and call $s_1,\dots,s_{2g}$.  Then $HP$ is defined to be the
graph of the piecewise linear function $\psi$ on $[0,2g]$ with
$\psi(0)=0$ and with slope $s_i$ on $[i-1,i]$.

Note that $NP(\CC_{f,q})$ and $HP(f,q)$ have the same endpoints,
namely $(0,0)$, and $(2g,g)$ and $NP(C_{f,q})$ lies on or over
$HP(f,q)$ \cite{Katz79b}.  The following is an immediate consequence
of \cite[Theorem 1.1 \& Corollary 1.3]{Zhu04} (which is the case
$p=q$) and Lemma~\ref{lemma:decompose} above.

\begin{thm}\label{thm:slopes}
Suppose $\CC \simeq \P^1$.
  The Newton polygon $NP(\CC_{f,q})$ coincides with the Hodge polygon
  $HP(f,q)$ if and only if $p\equiv1\pmod{\lcm(a_i)}$.
\end{thm}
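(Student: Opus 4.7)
The plan is to leverage the two ingredients flagged by the authors: the isogeny decomposition from Lemma~\ref{lemma:decompose} to reduce to the case $q=p$, and Zhu's result in that case. Recall that by Katz, for any rational function $g$ on $\P^1$ with poles of order prime to $p$, we have $NP(\CC_{p,g}) \ge HP(g,p)$ pointwise. Zhu's theorem promotes this to an equality precisely when $p \equiv 1 \pmod{\lcm(a_i)}$, where the $a_i$ are the polar multiplicities of $g$. In our situation the $a_i$ for $\mu f$ coincide with those of $f$ since $\mu \in \Fqtimes$, so $HP(\mu f, p) = HP(f,p)$ and Zhu's criterion depends only on $f$.

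The next step is to relate $HP(f,q)$ to $HP(f,p)$. A direct count from the definition shows that each slope ($0$, $1$, or $j/a_i$) appears in $HP(f,q)$ with multiplicity exactly $|S|=(q-1)/(p-1)$ times its multiplicity in $HP(f,p)$. Hence, as multisets of slopes,
\[
HP(f,q)=\bigsqcup_{\mu\in S} HP(\mu f,p).
\]
On the other hand, Lemma~\ref{lemma:decompose} gives $\JJ_{q,f}\sim\oplus_{\mu\in S}\JJ_\mu$, and since Newton polygons are isogeny invariants and their slope multisets are additive under direct sums,
\[
NP(\JJ_{q,f})=\bigsqcup_{\mu\in S}NP(\JJ_\mu).
\]

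Given these two identifications, the $(\Leftarrow)$ direction is immediate: if $p\equiv 1 \pmod{\lcm(a_i)}$ then Zhu's theorem gives $NP(\JJ_\mu)=HP(\mu f,p)$ for each $\mu$, and concatenating yields $NP(\JJ_{q,f})=HP(f,q)$. For the $(\Rightarrow)$ direction, assume $NP(\JJ_{q,f})=HP(f,q)$. Then $\bigsqcup_\mu NP(\JJ_\mu)$ and $\bigsqcup_\mu HP(\mu f,p)$ agree as multisets, while each $NP(\JJ_\mu) \ge HP(\mu f,p)=HP(f,p)$ pointwise. I then want to deduce that $NP(\JJ_\mu)=HP(f,p)$ for at least one (in fact every) $\mu$, so that Zhu's criterion applies and forces $p\equiv 1\pmod{\lcm(a_i)}$.

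The one non-formal step is this final deduction, and it is the main obstacle I anticipate, though I expect it to be routine. The argument I have in mind is a convexity/majorization induction on the smallest slope: since each $NP(\JJ_\mu)$ is convex, starts at $(0,0)$, and lies above the common polygon $HP(f,p)$, its minimal slope is at least the minimal slope $\lambda$ of $HP(f,p)$; moreover, the multiplicity of $\lambda$ as an initial slope of $NP(\JJ_\mu)$ cannot exceed its multiplicity $k$ in $HP(f,p)$ (otherwise $NP(\JJ_\mu)$ would dip below $HP(f,p)$ past the first break point). Since the total number of $\lambda$'s across the $|S|$ polygons $NP(\JJ_\mu)$ must equal $|S|k$, each must contribute exactly $k$ initial $\lambda$'s. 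Subtracting these initial segments and iterating on the next-smallest slope shows $NP(\JJ_\mu)=HP(f,p)$ for every $\mu$, completing the proof.
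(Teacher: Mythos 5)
Your proof is correct and follows exactly the route the paper intends: the paper states the theorem as an immediate consequence of Zhu's result (the case $q=p$) together with the isogeny decomposition of Lemma~\ref{lemma:decompose}, which is precisely your reduction. Your convexity/majorization argument for the ``only if'' direction correctly fills in the detail the paper leaves implicit, namely that equality of the concatenated polygons together with $NP(\JJ_\mu)\ge HP(\mu f,p)$ forces equality for each $\mu$.
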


The curve $\CC_{q,f}$ is ordinary if and only if the only slopes of
its Newton polygon are $0$ and $1$.  As an example of the theorem,
note that if all poles of $f(x) \in k(x)$ are simple, then the
congruence condition is empty and the Newton and Hodge polygons
coincide.  Moreover, the latter has only slopes 0 and 1, giving
another proof that $\CC_{f,q}$ is ordinary in this case.

\subsection{Slopes, $p$-ranks, and supersingular
  factors} \label{ss:slopesandss} 
In this subsection, we collect a few remarks about slopes, $p$-ranks,
and supersingular elliptic curves appearing in Jacobians of
Artin-Schreier curves.  Throughout, $\CC_{q,f}$ is the Artin-Schreier
cover of $\CC$ determined by the equation $z^q-z=f$.

By definition, $\CC_{q,f}$ is {\it supersingular} if and only if all
of the slopes of its Newton polygon equal $1/2$ \cite[Section
1.1]{ChaiOort09}.
If $\CC_{q,f}$ is supersingular, then there is an isogeny $\JJ_{q,f}
\otimes \overline{k} \sim \oplus_{i=1}^g E$ for a supersingular
elliptic curve $E$ \cite[Theorem 4.2]{Oort74}.  As seen in
\cite[Sections 1.1 and 5.3]{ChaiOort09}, if $\CC_{q,f}$ is
supersingular, its Jacobian has $p$-rank 0, but the converse is in
general false when $g_{q,f} \geq 3$.

Note that if the Jacobian of $\CC_{q,f}$ has a supersingular elliptic
curve as an isogeny factor of multiplicity $e$ (i.e., $\JJ_{q,f}
\otimes \overline{k} \sim E^e\oplus A$), then $2e$ of its slopes are
$1/2$.  The converse is false unless $e=g_{q,f}$; for every isogeny
type other than the supersingular one, there exists an absolutely
simple abelian variety having that isogeny type \cite{LenstraOort74}.

Suppose that $\dvsr_\infty(f)=\sum_{i=1}^ma_iP_i$ where as usual the
$P_i$ are distinct $\kbar$-valued points of $\P^1$ and the $a_i$ are
prime to $p$.  If some $a_i$ is even, then the Hodge polygon of $f$
has a segment of slope $1/2$.  If furthermore $p\equiv 1
\pmod{\lcm(a_i)}$, then by Theorem~\ref{thm:slopes}, the Newton
polygon of $\CC_{q,f}$ also has a segment of slope $1/2$ and so it is
possible that the Jacobian $\JJ_{q,f}$ of $\CC_{q,f}$ has
supersingular factors.

One case where it does follow immediately that $\JJ_{q,f}$ has
supersingular factors is when $p$ is odd and $f$ has exactly one pole
of order 2 and no other poles.  Indeed, in this situation, the Newton
and Hodge polygons are equal, and the latter is a segment of slope
$1/2$.  Since its length is $q-1$, it follows that over $\kbar$,
$\JJ_{q,f}$ is isogenous to a supersingular elliptic curve to the
power $(q-1)/2$.  More generally, any Artin-Schreier curve that
dominates this example will also have supersingular factors.  This
includes the Artin-Schreier curves $z^p-z=g(x)^2$ for any rational
function $g(x)$ having poles of order prime to $p$.

Finally, we note that a \emph{different} parity condition on the $a_i$
leads to supersingular factors, and therefore to slopes 1/2.  Indeed,
according to Proposition~\ref{prop:ss-factors}, if $\sum(a_i+1)$ is
odd and $q$ is a power of $r^2=|k|^2$, then $\CC_{f,q}$ has a
supersingular elliptic curve as isogeny factor with multiplicity at
least $(\sqrt{q}-1)/2$.  (Note that the hypothesis here implies that at
least one of the $a_i$ is even, making a connection with the previous
paragraph.)  This lower bound for the multiplicity of supersingular
curves as isogeny factors is often not sharp, as can be seen from the
main result of \cite{vanderGeervanderVlugt95}.

\subsection{Endomorphism algebras of Artin-Schreier curves}
The endomorphism algebras of Artin-Schreier curves are known only in
special cases.  We include some partial results here which are used
multiple times in Sections~\ref{s:exactrank} and~\ref{s:explicit}.
Throughout this subsection, we assume that $k$ contains the field of
$q$ elements.

Let $\Q[H]$ be the group algebra of the group $H\cong\Fq$.  By the
Perlis-Walker theorem \cite{PerlisWalker50},
$$\Q[H] \simeq \Q \oplus_{a \in S} \Q(\zeta_p)$$
where $S$ is a set of representatives of the cosets of $\F_p^* \subset
\F_q^*$.  Let $W$ be $\Qlbar^{q-1}$ with $\Fq$ acting by the direct
sum of its $q-1$ nontrivial characters.

Let $\CC_{q,f}$ be as in the previous subsection, and let $\JJ_{q,f}$
be its Jacobian.  Consider the endomorphism algebra
$\en^0(\JJ_{q,f})=\en_k(\JJ_{q,f})\otimes\Q$.

If $k$ contains the field of $q$ elements, then $H\cong\Fq$ acts on
$\CC_{q,f}$.  The action of $H$ on $\CC_{q,f}$ induces a homomorphism
$\Q[H]\to\en^0(\JJ_{q,f})$.  Let $\en^0(\JJ_{q,f})^{H}$ denote the
subalgebra of endomorphisms which commute with the action of $H$, in
other words, the subalgebra commuting with the image of
$\Q[H]\to\en^0(\JJ_{q,f})$.  We consider the composition $\Q[H] \to
\en^0(\JJ_{q,f}) \subset \en^0(H^1(\CC_{q,f}\times\kbar,
\overline{\Q}_\ell))$, where $\ell \not = p$ is prime.

\begin{prop} \label{proposition:rep} Suppose $\CC \simeq \P^1$.
  There is a $\Q[H]$-module isomorphism 
$$H^1(\CC_{q,f}\times\overline{k},\overline{\Q}_\ell) \simeq W^R$$ 
where $R=2g_{q,f}/(q-1)=-2+\sum(a_i+1)$.
\end{prop}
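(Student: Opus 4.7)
The plan is to decompose $H^1(\CC_{q,f}\times\kbar,\Qlbar)$ into $H$-isotypic components via the Leray spectral sequence, identify each piece on $\P^1$ as arising from a rank-one Artin-Schreier character sheaf, and compute its dimension by the Grothendieck-Ogg-Shafarevich (GOS) formula. Since $\tau:\CC_{q,f}\to\P^1$ is finite, Leray yields an $H$-equivariant isomorphism $H^1(\CC_{q,f}\times\kbar,\Qlbar)\cong H^1(\P^1_\kbar,\tau_*\Qlbar)$; and since $|H|=q$ is a unit in $\Qlbar$, the sheaf $\tau_*\Qlbar$ splits as $\bigoplus_{\chi\in\hat H}(\tau_*\Qlbar)^{(\chi)}$. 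Let $U=\P^1\setminus\{P_1,\dots,P_m\}$ and $j:U\hookrightarrow\P^1$. By Lemma~\ref{lemma:ASbasics}(1), $\tau$ is \'etale $H$-Galois over $U$, so $(\tau_*\Qlbar)|_U=\bigoplus_\chi\LL_\chi$ with each $\LL_\chi$ lisse of rank one.

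The next step is to identify $(\tau_*\Qlbar)^{(\chi)}$ globally for $\chi\neq 1$. Lemma~\ref{lemma:ASbasics}(1) also shows that $\tau$ is totally ramified at each $P_i$ with unique upper ramification break at $a_i$, so $I_{P_i}$ acts on $\LL_\chi$ through a non-trivial character of order $p$ with Swan conductor $\Swan_{P_i}(\LL_\chi)=a_i$ and $\LL_\chi^{I_{P_i}}=0$. Total ramification also forces the stalk of $\tau_*\Qlbar$ at $P_i$ to be one-dimensional with trivial $H$-action, so its $\chi$-isotypic piece vanishes, giving $(\tau_*\Qlbar)^{(\chi)}=j_!\LL_\chi=j_*\LL_\chi$. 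The GOS Euler-Poincar\'e formula on $\P^1_\kbar$ then yields
$$\chi(\P^1_\kbar,j_!\LL_\chi)=(2-m)\cdot 1-\sum_{i=1}^m\Swan_{P_i}(\LL_\chi)=2-\sum_{i=1}^m(a_i+1)=-R,$$
and since $\LL_\chi$ is non-constant one has $h^0=h^2=0$ (the latter by Poincar\'e duality), so $h^1(\P^1_\kbar,j_!\LL_\chi)=R$. The trivial isotypic piece contributes $H^1(\P^1_\kbar,\Qlbar)=0$, so each non-trivial $\chi$ appears in $H^1(\CC_{q,f}\times\kbar,\Qlbar)$ with multiplicity exactly $R$, and the total dimension $R(q-1)$ matches $2g_{q,f}$ from Lemma~\ref{lemma:ASbasics}(2).

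Finally, to upgrade from $\Qlbar[H]$-modules to $\Q[H]$-modules, I would invoke the Perlis-Walker decomposition $\Q[H]\cong\Q\oplus\bigoplus_{a\in S}\Q(\zeta_p)$ recalled above: each simple non-trivial $\Q[H]$-summand $V_a$ becomes, upon tensoring with $\Qlbar$, the sum of characters in a single $\gal(\Q(\zeta_p)/\Q)$-orbit on $\hat H\setminus\{1\}$, so the fact that all non-trivial characters occur with multiplicity $R$ over $\Qlbar$ implies that every $V_a$ occurs with multiplicity $R$ over $\Q$, giving $H^1(\CC_{q,f}\times\kbar,\Qlbar)\cong W^R$ as $\Q[H]$-modules. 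The only non-formal inputs are $\Swan_{P_i}(\LL_\chi)=a_i$ and $(\tau_*\Qlbar)^{(\chi)}=j_!\LL_\chi$, both immediate from Lemma~\ref{lemma:ASbasics}(1); the rest is routine GOS bookkeeping together with Galois descent, and I expect the index-and-sign bookkeeping in the GOS computation to be the step most vulnerable to small errors.
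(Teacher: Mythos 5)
Your proof is correct, but it travels a different road from the paper's. The paper computes the full character of $H$ on $H^1(\CC_{q,f}\times\kbar,\Qlbar)$ in one stroke via the equivariant Lefschetz fixed-point theorem, writing $\chi(\rho_{q,f})=2\chi_{\mathrm{triv}}-2\chi_{\mathrm{reg}}+\sum_i A_i$ and evaluating the Artin characters $A_i$ from the ramification data of Lemma~\ref{lemma:ASbasics}(1); matching characters then gives $W^R$ directly. You instead decompose $\tau_*\Qlbar$ into $\chi$-isotypic character sheaves, identify each non-trivial piece as $j_!\LL_\chi=j_*\LL_\chi$, and extract each multiplicity separately by Grothendieck--Ogg--Shafarevich. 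The two computations are dual to one another --- GOS applied to $\LL_\chi$ is exactly the inner product of the paper's equivariant Euler characteristic with $\chi$, and both reduce to the same local input $f_{P_i}(\chi)=a_i+1$ --- but your route makes the vanishing of $h^0$ and $h^2$ for each isotypic piece explicit (the Lefschetz argument silently uses that $H^0$ and $H^2$ of the curve carry only the trivial character), at the cost of the extra sheaf-theoretic bookkeeping ($H$-equivariance of Leray, $j_!=j_*$, stalks at the $P_i$), all of which you handle correctly. Two minor remarks: your final Perlis--Walker descent paragraph is superfluous, since an isomorphism of $\Qlbar[H]$-modules restricts to one of $\Q[H]$-modules and $W$ is itself defined as a $\Qlbar$-vector space; and the consistency check $R(q-1)=2g_{q,f}$ is a pleasant sanity check but not logically needed.
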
 

\begin{proof}
  Consider the representation $\rho_{q,f}$ determined by the action of
  $H$ on $H^1(\CC_{q,f}\times\kbar, \overline{\Q}_{\ell})$.  By the
  Lefschetz fixed point theorem \cite[V.2.8]{MilneEC}, the character
  $\chi(\rho_{q,f})$ satisfies
$$\chi(\rho_{q,f})=2 \chi_{{\rm triv}} -2 \chi_{\rm reg} +  \sum_{i=1}^m A_i,$$
where $A_i$ is the character of the Artin representation attached to
the branch point $P_i$ and $\chi_{\rm reg}$ is the character of the
regular representation.  By Lemma~\ref{lemma:ASbasics}(2) and
\cite[VI]{SerreLF}, for $\sigma \in \Fq$,
$$A_i(\sigma)=
\begin{cases}
-(a_i+1) & \sigma \in \F_q, \ \sigma \not = {\rm id}\\
(a_i+1)(q-1) & \sigma = {\rm id}.
\end{cases}$$
Thus 
$$\chi(\rho_{q,f})(\sigma)=
\begin{cases}
2-\sum_{i=1}^m(a_i+1) & \sigma \not = {\rm id}\\
(q-1)(-2+\sum_{i=1}^m(a_i+1))& \sigma = {\rm id}
\end{cases}
$$
and therefore by Lemma~\ref{lemma:ASbasics}(3),
$$\chi(\rho_{q,f})(\sigma)=
\begin{cases}
-R& \sigma \not = {\rm id}\\
(q-1)R& \sigma = {\rm id}.
\end{cases}
$$
which is the character of $W^R$.
\end{proof}

\begin{lemma}\label{lemma:ASendos}
  Suppose that $k$ contains the field of $q$ elements, so that
  $\CC_{f,q}\to\P^1$ is an $\Fq$-Galois extension and $H=\Fq$ acts
  on the Jacobian $\JJ_{q,f}$ of $\CC_{q,f}$.
\begin{enumerate}
\item The image of $\Q[H]\to\en^0(\JJ_{q,f})$ has dimension $q-1$.
\item If $\CC_{q,f}$ is ordinary and $k$ is algebraic over $\Fp$, then
  $\en^0(\JJ_{q,f})$ is commutative of dimension $2g_{q,f}$ and so
  $\en^0(\JJ_{q,f})^{H}=\en^0(\JJ_{q,f})$ has dimension $2g_{q,f}$.
\item If $\CC_{q,f}$ is supersingular and $k$ contains the field of $p^2$
  elements, then 
$$\en^0(\JJ_{q,f}) \cong M_g(D)$$ 
where $D$ is the quaternion algebra ramified at $p$ and $\infty$, and
$\en^0(\JJ_{q,f})^{H}$ has dimension $4g_{q,f}^2/(q-1)$.
\end{enumerate}
\end{lemma}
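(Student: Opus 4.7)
For part (1), Proposition~\ref{proposition:rep} gives $H^1(\CC_{q,f}\times\kbar,\Qlbar) \cong W^R$ as $\Q[H]$-modules, so every non-trivial character of $H$ appears, each with multiplicity $R$. The kernel of the induced map $\Q[H] \to \en(H^1(\CC_{q,f}\times\kbar,\Qlbar))$ is therefore the one-dimensional ideal spanned by the idempotent $e_0 = q^{-1}\sum_{\alpha\in H}[\alpha]$ projecting onto the trivial character. Since this representation factors through the faithful action of $\en^0(\JJ_{q,f}) \otimes \Qlbar$ on $H^1$ (Tate's theorem), the kernel of $\Q[H] \to \en^0(\JJ_{q,f})$ is the same one-dimensional ideal, and the image has $\Q$-dimension $q-1$.

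For part (2), the hypotheses let us pass to a finite subfield $k_0 \subset \Fpbar$ over which all endomorphisms of $\JJ_{q,f}$ are defined. By Honda--Tate theory, each ordinary simple isogeny factor $A_i$ of $\JJ_{q,f}$ over $\kbar$ has $\en^0(A_i)$ a CM field of degree $2\dim A_i$. Writing $\JJ_{q,f} \sim \prod A_i^{e_i}$ we obtain $\en^0(\JJ_{q,f}) = \prod M_{e_i}(K_i)$, which is commutative of $\Q$-dimension $2g_{q,f}$ precisely when all $e_i = 1$. To force this, I would analyze the Frobenius action on the $H$-isotypic pieces $V_\chi$ of $H^1(\CC_{q,f}, \Qlbar)$: these eigenvalues are products of Gauss sums, and for ordinary Artin--Schreier covers they are pairwise distinct across different $\chi$. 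Distinctness of the eigenvalues of Frobenius on $H^1$ implies that its commutant in $\en(H^1)$ is a commutative algebra of $\Qlbar$-dimension $2g_{q,f}$, so by Tate, $\en^0(\JJ_{q,f})$ itself is commutative of $\Q$-dimension $2g_{q,f}$. Commutativity of $\en^0$ trivially gives $\en^0(\JJ_{q,f})^H = \en^0(\JJ_{q,f})$. The main obstacle is precisely the Gauss-sum/multiplicity-one step, since without it one cannot rule out a non-commutative factor $M_{e_i}(K_i)$.

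For part (3), supersingularity together with $k \supset \F_{p^2}$ and Deuring's theorem give $\JJ_{q,f} \sim E^g$ over $k$ where $E$ is a supersingular elliptic curve with $\en^0(E) = D$, so $\en^0(\JJ_{q,f}) \cong M_g(D)$, of $\Q$-dimension $4g^2$. For the $H$-invariant subalgebra, extend scalars to $\Qlbar$: since $D \otimes_\Q \Qlbar \cong M_2(\Qlbar)$ for $\ell \neq p$, we have $\en^0(\JJ_{q,f}) \otimes \Qlbar \cong M_{2g}(\Qlbar)$ identified with $\en(H^1(\JJ_{q,f}, \Qlbar))$, and under this identification the action of $\Q[H]$ is the decomposition $H^1 \cong W^R = \bigoplus_{\chi \neq 1} V_\chi$ with $\dim V_\chi = R = 2g/(q-1)$. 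The centralizer of $\Q[H]$ in $\en(H^1)$ is therefore $\prod_{\chi \neq 1} \en(V_\chi) \cong \prod_{\chi \neq 1} M_R(\Qlbar)$, of total $\Qlbar$-dimension $(q-1)R^2 = 4g^2/(q-1)$. Since taking $H$-invariants and tensoring with $\Qlbar$ commute (both operations are exact on the semisimple $\Q[H]$-module $\en^0(\JJ_{q,f})$), this recovers $\dim_\Q \en^0(\JJ_{q,f})^H = 4g^2/(q-1)$.
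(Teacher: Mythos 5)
Parts (1) and (3) of your proposal are correct and in substance match the paper. For (1) the paper simply says the claim follows from Proposition~\ref{proposition:rep}; your version spells out the intended argument (the kernel of $\Q[H]\to\en^0(\JJ_{q,f})$ is the line spanned by the idempotent of the trivial character, since by Proposition~\ref{proposition:rep} every nontrivial character of $H$ occurs in $H^1(\CC_{q,f}\times\kbar,\Qlbar)$ and the action of $\en^0(\JJ_{q,f})\otimes\Ql$ on $H^1$ is faithful --- note that faithfulness here is elementary and does not need the full strength of Tate's theorem). For (3) the paper invokes \cite[Theorem~2(d)]{Tate66a} for $\en^0(\JJ_{q,f})\cong M_g(D)$ and then combines part (1) with the double centralizer theorem; your route --- tensor with $\Qlbar$, identify $\en^0(\JJ_{q,f})\otimes\Qlbar$ with $\en(H^1)\cong M_{2g}(\Qlbar)$, and compute the centralizer of $\Q[H]$ as $\prod_{\chi\neq1}M_R(\Qlbar)$ with $R=2g_{q,f}/(q-1)$, of total dimension $(q-1)R^2=4g_{q,f}^2/(q-1)$ --- is the same count made explicit, and is arguably cleaner, since the double centralizer theorem as usually stated applies to simple subalgebras and here the image of $\Q[H]$ is only commutative semisimple.

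Part (2) is where your write-up is not yet a proof. The equivalence you rederive via Honda--Tate --- $\en^0(\JJ_{q,f})$ is commutative of dimension $2g_{q,f}$ if and only if every simple isogeny factor occurs with multiplicity one, if and only if the characteristic polynomial of Frobenius has no repeated root --- is precisely the content of \cite[Theorem~2(c)]{Tate66a}, which is all the paper cites for this part. You then correctly observe that the remaining issue is to exclude repeated Frobenius eigenvalues (an ordinary abelian variety over $\Fpbar$ certainly need not have commutative $\en^0$: consider $E\times E$), and you propose that the eigenvalues, being exponential sums, are pairwise distinct across the isotypic pieces --- but you do not prove this and you flag it yourself as the main obstacle. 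So, as written, part (2) has a genuine (self-acknowledged) gap at exactly the step that carries the content. In fairness, the paper's own proof of (2) is a bare citation and does not verify the separability hypothesis either; to close the argument you would need either to establish the distinctness of the exponential-sum eigenvalues for the curves $\CC_{q,f}$ in question (compare the eigenvalue analysis on the spaces $W_\beta$ in the proof of Lemma~\ref{lemma:indep-of-endos}) or to find some other route to multiplicity one.
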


\begin{proof}\mbox{}
\begin{enumerate}
\item This follows from Proposition~\ref{proposition:rep}.
\item See \cite[Theorem~2(c)]{Tate66a}.
\item The fact that $\en^0(\JJ_{q,f}) \cong M_g(D)$ can be found in
  \cite[Theorem~2(d)]{Tate66a}.  (The assumption that $\F_{p^2}\subset
  k$ guarantees that the endomorphism algebra of a supersingular
  elliptic curve is $D$.)  By part (1), the dimension of the image of
  $\Q[H]\to\en^0(\JJ_{q,f})$ is $q-1$.  By the double centralizer theorem
  \cite[Theorem~2.43]{KnappAA}, $\en^0(\JJ_{q,f})^{H}$ has dimension
  $4g_{q,f}^2/(q-1)$.
\end{enumerate}
\end{proof}

\bibliography{database}{}
\bibliographystyle{alpha}

\end{document}